%
%

\documentclass[12pt,reqno]{amsart}

\usepackage{amsmath,amsthm,amssymb,hyperref}
\usepackage{a4wide}
\usepackage{calrsfs}

\numberwithin{equation}{section}

\newtheorem{theorem}{Theorem}[section]
\newtheorem{lemma}[theorem]{Lemma}
\newtheorem{proposition}[theorem]{Proposition}
\newtheorem{corollary}[theorem]{Corollary}

\theoremstyle{remark}
\newtheorem{remark}[theorem]{Remark}

\theoremstyle{remark}
\newtheorem{example}[theorem]{Example}


\DeclareMathOperator{\ad}{ad}
\DeclareMathOperator{\Ad}{Ad}
\DeclareMathOperator{\Aut}{Aut}

\DeclareMathOperator{\id}{id}

\DeclareMathOperator{\mysp}{\g{sp}}

\DeclareMathOperator{\Pu}{Pu}

\DeclareMathOperator{\so}{\g{so}}
\DeclareMathOperator{\SO}{SO}
\DeclareMathOperator{\Sp}{Sp}
\DeclareMathOperator{\spann}{span}
\DeclareMathOperator{\spin}{\g{spin}}
\DeclareMathOperator{\Spin}{Spin}
\DeclareMathOperator{\su}{\g{su}}
\DeclareMathOperator{\SU}{SU}
\DeclareMathOperator{\tr}{tr}

\DeclareMathOperator{\U}{U}

\newcommand{\C}{\ensuremath{\mathbb{C}}}
\newcommand{\Ca}{\ensuremath{\mathbb{O}}}
\newcommand{\eS}{\ensuremath{\mathrm{S}}}
\newcommand{\g}[1]{\operatorname{\ensuremath{\mathfrak{#1}}}}
\newcommand{\G}{\ensuremath{\mathrm{F}_4^*}}
\newcommand{\GL}{\ensuremath{\mathrm{GL}}}

\newcommand{\HH}{\ensuremath{\mathrm{H}}}

\newcommand{\LG}{\ensuremath{\mathrm{G}}}
\newcommand{\M}{M}
\newcommand{\maxpar}{\g{k}_0 + \g{a} + \g{n}}

\newcommand{\PP}{\ensuremath{\mathrm{P}}}
\newcommand{\R}{\ensuremath{\mathbb{R}}}

\renewcommand\Re{\mathrm{Re}}
\renewcommand{\H}{\ensuremath{\mathbb{H}}}
\renewcommand{\O}{\mathrm{O}}


\begin{document}


\title[Octonions, triality, F4, and polar actions]{Octonions, triality, the exceptional Lie algebra F4, and polar actions on the Cayley hyperbolic plane}

\author[A.\ Kollross]{Andreas Kollross}
\address{Institut f\"{u}r Geometrie und Topologie, Universit\"{a}t Stuttgart, Germany}
\email{kollross@mathematik.uni-stuttgart.de}

\date{\today}

\begin{abstract}
Using octonions and the triality property of~Spin(8),
we find explicit formulae for the Lie brackets of the exceptional simple real Lie algebras $\mathfrak{f}_4$ and $\mathfrak{f}^*_4$, i.e.\ the Lie algebras of the isometry groups of the Cayley projective  plane and the Cayley hyperbolic plane. As an application, we classify polar actions on the Cayley hyperbolic plane which leave a totally geodesic subspace invariant.
\end{abstract}

\subjclass[2010]{53C35, 57S20, 17A35}

\keywords{Cayley hyperbolic plane, exceptional Lie algebra, octonions, polar action}

\maketitle


\section{Introduction}
\label{sec:intro}


Exceptional Lie groups, i.e.\ the simple Lie groups of types~$\mathrm{E}_6$, $\mathrm{E}_7$, $\mathrm{E}_8$, $\mathrm{F}_4$, $\mathrm{G}_2$, first discovered by Killing~\cite{killing} at the end of the 19th century, appear in many contexts in geometry and physics. However, they still remain somewhat elusive objects of study, except for groups of type~$\mathrm{G}_2$, which are the exceptional simple Lie groups of lowest dimension. For the other simple exceptional Lie groups, the group structure is not directly accessible for computations and one has often to resort to some rather indirect methods in order to study geometric problems involving one of these groups, cf.~\cite{k09}.

In this article, we study isometric Lie group actions on the Cayley hyperbolic plane \[M = \Ca \HH^2 = \mathrm{F}_4^*/\Spin(9),\] a non-compact Riemannian symmetric space of rank one and dimension~$16$. More precisely, our goal is to classify \emph{polar actions} on~$\M$.
A proper isometric action of a Lie group on a Riemannian manifold is called \emph{polar} if there exists a \emph{section} for the action, i.e.\ an embedded submanifold which meets all orbits of the group action and meets them orthogonally. See \cite{B11,D12,Th05,Th10} for survey articles in connection with polar actions.
Since the criterion for polarity of an isometric action (see Proposition~\ref{prop:criterion} below) involves the Lie bracket of the isometry group, it is necessary to have a good model for the Lie algebra~$\g{f}_4^*$ in order to decide whether a given isometric Lie group action on~$\M$ is polar or not.

A widely used construction of the compact Lie group~$\mathrm{F}_4$ is to define it as the group of automorphisms of the Albert algebra, i.e.\ the exceptional Jordan algebra given by the set of self-adjoint $3 \times 3$-matrices with entries from the Cayley numbers~$\Ca$, where the multiplication is defined by $x \circ y = \frac12(xy+yx)$, see~\cite{freudenthal,murakami,ChS,adams,Baez02,figueroa,yokota}.  Another approach to construct the Lie algebra~$\g{f}_4$ is to view it as the direct sum of~$\spin(9)$ and the $16$-dimensional module of the spin representation of~$\Spin(9)$, see~\cite{Baez02,ms}. In \cite{adams}, $\mathrm{F}_4$ is defined as a subgroup of~$\mathrm{E}_8$. While these various constructions provide existence proofs for the Lie algebras of type~$\mathrm{F}_4$, it appears that they are not so well suited to study our particular geometric problem involving Lie group actions on~$\Ca \HH^2$, where it is necessary to compute Lie brackets explicitly and to have good descriptions of Lie triple systems and subalgebras.
In~\cite[Section~4.2]{Baez02}, Baez remarks that the isomorphism
\[
\g{f}_4 \cong \so(\Ca) \oplus \Ca^3
\]
is an elegant way of describing~$\g{f}_4$. He writes:\ \textit{``This formula emphasizes the close relation between~$\g{f}_4$ and triality: the Lie bracket in~$\g{f}_4$ is completely built out of maps involving~$\so(8)$ and its three $8$-dimensional irreducible representations!''} In this article, we pursue this approach and obtain the explicit expressions~(\ref{eq:cbracket}) and~(\ref{eq:ncbracket}) for the bracket of the compact real form~$\g{f}_4$ and the non-compact real form~$\g{f}_4^*$. These formulae involve octonionic multiplication and the triality automorphism of the Lie algebra~$\so(8)$. We use them to describe various subalgebras of~$\g{f}_4^*$, as well as the totally geodesic subspaces of~$\Ca \HH^2$, and to decide whether or not certain subgroups of~$\mathrm{F}_4^*$ act polarly on~$\Ca \HH^2$. Note that  our model also encompasses the description $\g{f}_4 \cong \spin(9) \oplus \R^{16}$, since we have $\so(8) \oplus \Ca \cong \so(9)$, cf.\ Lemma~\ref{lm:sonine} below.

Our constructions provide a new existence proof for the exceptional simple Lie algebras~$\g{f}_4$ and~$\g{f}_4^*$.

As an application, the following result on polar actions is proved. (For a more detailed description of the actions involved here, see Table~\ref{tbl:actions} or Corollary~\ref{cor:main}.)

\begin{theorem}\label{th:main}
Let $H$ be a closed connected non-trivial subgroup of~$\G$ whose action on~$\Ca\HH^2$ is polar and leaves a totally geodesic subspace~$P \neq \Ca\HH^2$ invariant. Then

\begin{enumerate}

\item either the action has a fixed point and $H$ is conjugate to one of
\begin{equation}\label{eq:fixpt}
\Spin(9), \quad \Spin(8), \quad \Spin(7) \cdot \SO(2), \quad \Spin(6) \cdot \Spin(3)
\end{equation}

\item or the $H$-orbits coincide with the orbits of the connected component~$N(P)_0$ of the normalizer $N(P) = \left \{ g \in \G \,\colon g \cdot P = P \right \}$ where $P$ is not isometric to a real hyperbolic space of dimension~$3$ or~$4$.

\end{enumerate}
Conversely, the actions of the groups~\emph{(\ref{eq:fixpt})} on~$\Ca\HH^2$ are polar and, furthermore, if $P$ is a totally geodesic subspace of~$\M$ not isometric to a real hyperbolic space of dimension~$3$ or~$4$, then $N(P)_0$ acts polarly on~$\M$.

\end{theorem}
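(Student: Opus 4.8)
I would base the proof on the polarity criterion (Proposition~\ref{prop:criterion}), applied by means of the explicit bracket formula~(\ref{eq:ncbracket}), together with the classification of totally geodesic subspaces of~$\Ca\HH^2$ — the subspaces $\R\HH^k$ for $1\le k\le 8$ (the case $k=8$ being $\Ca\HH^1$), $\C\HH^2$ and $\H\HH^2$. First I would treat the case in which the $H$-action has a fixed point~$p$. Then $H$ is a closed connected subgroup of the isotropy group at~$p$, which is isomorphic to~$\Spin(9)$, and since slice representations of polar actions are polar, the isotropy representation of~$H$ on $T_p\Ca\HH^2\cong\R^{16}$ — the restriction to~$H$ of the spin representation of~$\Spin(9)$ — must be a polar representation. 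By Dadok's theorem such a representation is orbit equivalent to the isotropy representation of a Riemannian symmetric space; matching the possible orbit-equivalence classes against the constraint that $H\subseteq\Spin(9)$ act via the $16$-dimensional spin module leaves only finitely many candidates, and a direct inspection — realising the subgroups in the octonionic model through the inclusions $\Spin(m)\cdot\Spin(n)\subseteq\Spin(9)$ with $m+n=9$ and the corresponding tensor product decompositions of~$\R^{16}$ into spinor modules — shows that the action on~$\Ca\HH^2$ is polar precisely for the four groups listed in~(\ref{eq:fixpt}). Checking that these four actions really are polar, by Proposition~\ref{prop:criterion} and by exhibiting totally geodesic sections $\R\HH^1$ or $\R\HH^2$ through~$p$, simultaneously establishes the converse assertion in case~(i).

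If the action has no fixed point, then any invariant totally geodesic $P\neq\Ca\HH^2$ has positive dimension, and, $H$ being connected, $H\subseteq N(P)_0$. I would then go through the list of totally geodesic subspaces~$P$ one by one and, for each, describe $N(P)_0$ explicitly inside~$\G$ using~(\ref{eq:ncbracket}): it is generated by the transvections and rotations of~$P$ together with the subgroup of~$\Spin(9)$ fixing $T_pP$ pointwise, acting on the normal space $\nu_p P$. Computing the slice representation of~$N(P)_0$ at a point of~$P$ and applying Proposition~\ref{prop:criterion} shows that $N(P)_0$ acts polarly on~$\Ca\HH^2$ exactly when $P$ is not isometric to~$\R\HH^3$ or~$\R\HH^4$; this gives both the exclusion in~(ii) and the final clause of the theorem. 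For the remaining assertion in~(ii) — that a polar $H\subseteq N(P)_0$ without fixed point has the same orbits as~$N(P)_0$ — I would verify case by case that no proper polar subaction is possible: restricting a section of the $H$-action to~$P$ shows that $H|_P$ is polar on~$P$, and by minimality of~$P$ it, as well as the slice representation of~$H$, admits no invariant proper totally geodesic subspace, which combined with the explicit structure of~$N(P)_0$ forces orbit equivalence of~$H$ and~$N(P)_0$. (In the two exceptional cases $P=\R\HH^3$ and $P=\R\HH^4$ one shows separately that a polar~$H$ normalising such a~$P$ either fixes a point of~$\Ca\HH^2$ or leaves invariant a totally geodesic subspace of another, admissible type, so that these~$P$ never arise in conclusion~(ii).)

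\textbf{Main obstacle.}
The technical core is the case analysis in the no-fixed-point case: identifying $N(P)_0$ and, above all, its slice representation for each totally geodesic subspace~$P$ — in particular for the ``large'' subspaces $\H\HH^2$, $\Ca\HH^1=\R\HH^8$ and $\C\HH^2$, whose normalisers are the most involved — and then deciding polarity through Proposition~\ref{prop:criterion}. This is exactly where the explicit bracket~(\ref{eq:ncbracket}) is indispensable, since the criterion is formulated purely in terms of the Lie bracket of~$\g{f}_4^*$ and one needs workable coordinates for the relevant Lie triple systems and subalgebras. A secondary difficulty is the clean isolation of the exceptional subspaces~$\R\HH^3$ and~$\R\HH^4$ and the bookkeeping needed to be certain that every orbit-equivalence class of polar action has been counted exactly once.
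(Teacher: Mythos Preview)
Your overall architecture matches the paper's: treat the fixed-point case via the classification of polar subrepresentations of the spin representation of~$\Spin(9)$ (the paper simply cites~\cite{DK10} for this), and otherwise run through the isometry types of~$P$, sitting~$H$ inside~$N(P)_0$ and analysing slice representations with Proposition~\ref{prop:criterion}. The identification of the normalizers~$N(P)_0$ and their slice representations, and the resulting polarity/non-polarity of the~$N(P)_0$-actions (in particular the exclusion of~$\HH^3$ and~$\HH^4$), is likewise done in the paper by direct computation, summarised in Table~\ref{tbl:actions}.

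The gap is in your mechanism for the rigidity step in~(ii). The argument ``by minimality of~$P$ the slice representation of~$H$ admits no invariant proper totally geodesic subspace, which \dots\ forces orbit equivalence'' does not go through: minimality of~$P$ among $H$-invariant totally geodesic subspaces says nothing about reducibility of the slice representation on the normal space, and in fact the slice representations here are typically reducible (for $P=\HH^k$ the normal space splits as $U\times\{0\}\oplus\{0\}\times\Ca$). The paper exploits exactly this reducibility, but in the opposite direction: by Dadok's theorem a section~$T_o\Sigma$ of the slice representation splits as $V_1\oplus V_2$ with $V_1\subset\Ca\times\{0\}$ and $V_2\subset\{0\}\times\Ca$, and then two structural lemmas do the work --- Lemma~\ref{lm:totgeod} (such a Lie triple system is tangent to some~$\K\HH^2$) and Lemma~\ref{lm:nochsec} (a polar action with section~$\C\HH^2$ or~$\H\HH^2$ has no singular orbit). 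For $P=\HH^k$, $k\le 7$, this forces cohomogeneity~$\ge 9$ with a section of dimension~$\le 8$, a contradiction; for $P=\HH^8$ one instead computes a single bracket to violate the orthogonality condition in Proposition~\ref{prop:criterion}. The case $P=\H\HH^2$ is not handled by any soft argument either: the paper runs through the maximal connected subgroups of~$\Sp(2)$ (namely $\Sp(1)\times\Sp(1)$, $\U(2)$, and the irreducible $\SU(2)$) and disposes of each separately, while for $P=\C\HH^2$ and~$\R\HH^2$ it invokes a result from~\cite{KP} on irreducible polar isotropy. None of this is captured by a ``minimality'' principle, so as written your step~(ii) would not close.
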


It should be noted that the ``Conversely,~\dots''-part of the statement was already known, in fact, it follows from the classification of polar actions by reductive algebraic subgroups of~$\G$ which was obtained in~\cite{k11}. The result in~\cite{k11} was proved using the duality of Riemannian symmetric spaces and the classification of polar actions on~$\Ca \PP^2$ by~Podest\`{a} and Thorbergsson~\cite{PT99}, see also~\cite{gk16}. Polar actions with a fixed point on irreducible symmetric spaces have been classified in~\cite{DK10}.

Moreover, actions of cohomogeneity one on various Riemannian symmetric spaces of non-compact type have been classified in \cite{B98,BB01,BT03,BT04,BT07}. In particular, in the article~\cite{BT07} by~Berndt and~Tamaru, a complete classification of cohomogeneity one actions on~$\Ca \HH^2$ was obtained.

Let us also remark that polar actions on Riemannian symmetric spaces of compact type have been completely classified \cite{k02,k07,k09,L11,KL12}, see~\cite[Introduction]{KL12} for a summary. However, for symmetric spaces of non-compact type, polar actions so far have only been completely classified in the case of real hyperbolic spaces~\cite{wu} and complex hyperbolic spaces~\cite{DDK12}.

The new results in our main theorem can also be stated more explicitly in the following form.

\begin{corollary}\label{cor:main}
Let $H$ be a closed connected non-trivial subgroup of~$\G$ whose action on~$\Ca\HH^2$ is polar and leaves a totally geodesic subspace~$P \neq \Ca\HH^2$ of positive dimension invariant. Then there exists a maximal connected subgroup~$L$ of~$\G$ containing $H$ such that the $L$-orbits and the $H$-orbits coincide and $L$ is conjugate to one of the following: $\LG_2 \cdot \SO_0(1,2)$, $\SU(3) \cdot \SU(1,2)$, $\Sp(1) \cdot \Sp(1,2)$, $\Spin(7) \cdot \SO_0(1,1)$, $\Spin(6)\cdot \Spin(1,2)$, $\Spin(3)  \cdot \Spin(1,5)$, $\SO(2) \cdot \Spin(1,6)$, $\Spin(1,7)$, $\Spin(1,8)$, cf.~Table~\ref{tbl:actions}.
\end{corollary}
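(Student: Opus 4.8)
Corollary~\ref{cor:main} is a reformulation of Theorem~\ref{th:main}, so the plan is to unwind each of its two alternatives into the explicit list of nine groups. I would treat the second alternative first, since it carries the bulk of the content: there the $H$-orbits coincide with the orbits of $N(P)_0$ for some totally geodesic $P$ not isometric to $\R\HH^3$ or $\R\HH^4$, so one only has to run over all totally geodesic submanifolds of $\M$. The first step is to list these up to congruence: the real hyperbolic spaces $\R\HH^k$ for $1\le k\le 8$ (with $\Ca\HH^1 = \R\HH^8$), the complex hyperbolic plane $\C\HH^2$, the quaternionic hyperbolic plane $\HH\HH^2$, and the two non-congruent totally geodesic copies of $\R\HH^2$ (a complex line $\C\HH^1$ and a real form of $\C\HH^2$).

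For each such $P = \exp_p(\g{w})$, with $\g{w}\subseteq T_p\M$ the associated Lie triple system, the Lie algebra of $N(P)_0$ is $\{X\in\g{f}_4^*\colon [X,\ \g{w}+[\g{w},\g{w}]]\subseteq \g{w}+[\g{w},\g{w}]\}$, and the heart of the argument is to read off its structure from the explicit bracket~(\ref{eq:ncbracket}): for each $P$ it splits as the Lie algebra of the isometry group acting effectively on $P$ together with the Lie algebra of the pointwise stabilizer of $P$ in $\G$, and it is precisely the description of $\g{f}_4^*$ via octonionic multiplication and the triality automorphism of $\so(8)$ that makes this second, centralizing factor identifiable. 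After discarding $P = \R\HH^3$ and $P = \R\HH^4$, and then passing from $N(P)_0$ (equivalently from $H$, which normalizes the same orbit foliation, so $H$ is contained in every subgroup preserving it) to a maximal connected subgroup $L$ of $\G$ with that orbit decomposition, one reaches the nine groups: $\C\HH^2,\HH\HH^2,\Ca\HH^1$ produce $\SU(3)\cdot\SU(1,2),\Sp(1)\cdot\Sp(1,2),\Spin(1,8)$; the geodesic $\R\HH^1$ produces $\Spin(7)\cdot\SO_0(1,1)$; and $\R\HH^5,\R\HH^6,\R\HH^7$ together with the two copies of $\R\HH^2$ produce $\Spin(3)\cdot\Spin(1,5),\SO(2)\cdot\Spin(1,6),\Spin(1,7),\Spin(6)\cdot\Spin(1,2),\LG_2\cdot\SO_0(1,2)$.

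For the first alternative of Theorem~\ref{th:main}, $H$ has a fixed point $p$ and is conjugate to one of $\Spin(9),\Spin(8),\Spin(7)\cdot\SO(2),\Spin(6)\cdot\Spin(3)$, all lying inside the isotropy group $\Spin(9)$ at $p$. Any positive-dimensional totally geodesic $P\ne\M$ invariant under $H$ must contain $p$ (the point of $P$ nearest $p$ is $H$-fixed), so $T_pP$ is an $H$-invariant proper nonzero Lie triple system in $T_p\M\cong\R^{16}$; a short inspection of the four slice (isotropy) representations --- using that $\Delta_8^+\not\cong\Delta_8^-$ as $\Spin(8)$-modules, that $\SO(2)$-invariance of a real subspace of $\C^8$ forces it to be complex, and the invariant quaternionic structure present in the $\Spin(6)\cdot\Spin(3)$-case --- determines which of the four groups admits such a subspace at all, and for each of those the corresponding maximal orbit-equivalent overgroup is again one of the nine. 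It then only remains to recall that the ``conversely'' direction --- polarity of the nine groups, and of the groups in~(\ref{eq:fixpt}) --- is already contained in Theorem~\ref{th:main} (and, for the second family, in~\cite{k11} via~\cite{PT99}), and to verify that no two of the nine listed groups are conjugate in $\G$, which one does by comparing dimensions, real forms and cohomogeneities.

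The step I expect to be the real obstacle is the computation of the normalizers: pinning down each $N(P)_0$ exactly as a subgroup of $\G$, and then deciding whether it already equals the maximal connected subgroup with that orbit decomposition or is properly contained in a larger such group. This is exactly the point at which the explicit model of $\g{f}_4^*$ furnished by~(\ref{eq:ncbracket}) is indispensable: without an effective description of the Lie bracket one cannot isolate the centralizing factor of $N(P)_0$, nor certify orbit-equivalence with the maximal overgroup. By contrast, the congruence classification of totally geodesic submanifolds of $\Ca\HH^2$ and the slice-representation bookkeeping in the fixed-point case are routine.
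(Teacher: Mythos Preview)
The paper gives no separate argument for Corollary~\ref{cor:main}; it is presented as the explicit form of alternative~(ii) of Theorem~\ref{th:main}, with the nine groups simply read off from the column~$N(P)_0$ of Table~\ref{tbl:actions} (Proposition~\ref{prop:tgnorm}), which in turn is quoted from~\cite{gk16} via duality. Your treatment of alternative~(ii) follows exactly this route, except that you propose to recompute the normalizers directly from the bracket~(\ref{eq:ncbracket}) rather than citing the table; that is additional work but not a different idea.

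Your handling of alternative~(i), however, contains a genuine gap. You claim that for each fixed-point group admitting an invariant positive-dimensional Lie triple system, ``the corresponding maximal orbit-equivalent overgroup is again one of the nine.'' This does not hold. Take $H=\Spin(8)\subset\Spin(9)$: in the model of Section~\ref{sec:f4cpt} its Lie algebra is $\so(8)\times\{0\}^3$, and its isotropy action on $\g{p}^*=\Ca\times\Ca$ is $(y,z)\mapsto(\lambda(A)y,\lambda^2(A)z)$, so $\Ca\times\{0\}$ is a $\Spin(8)$-invariant Lie triple system tangent to a totally geodesic~$\HH^8$ (Proposition~\ref{prop:realhyp}). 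Thus $\Spin(8)$ satisfies the hypothesis of the corollary. But the $\Spin(8)$-action has a fixed point, while each of the nine $N(P)_0$-actions in Table~\ref{tbl:actions} has $P$ as a positive-dimensional orbit through~$o$; hence $\Spin(8)$ is not orbit equivalent to any of them, and no group from the list can contain~$\Spin(8)$ with the same orbits. The same obstruction applies to $\Spin(7)\cdot\SO(2)$ and $\Spin(6)\cdot\Spin(3)$. The fact you invoke, $\Delta_8^+\not\cong\Delta_8^-$, only pins down \emph{which} invariant subspaces occur; it does not rule them out.

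In short, the corollary, as the paper introduces it (``the new results in our main theorem can also be stated more explicitly\dots''), is meant to rephrase alternative~(ii) of Theorem~\ref{th:main}; the fixed-point groups are handled separately by~\cite{DK10} and are not intended to be absorbed into the list of nine. Your plan to fold them in cannot succeed as written; you should instead observe that the added hypothesis ``$P$ of positive dimension'' is what separates the corollary from alternative~(i), and treat the overlap (fixed-point groups that also preserve some positive-dimensional~$P$) as a point where the corollary's statement needs to be read in light of the theorem rather than derived independently.
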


This article is organized as follows.
We start in Section~\ref{sec:octonions} by recalling notation and basic material on octonions and the Lie algebra~$\so(8)$ mostly from~Freudenthal~\cite{freudenthal} and Murakami~\cite{murakami}.
The next section contains the construction of the compact Lie algebra~$\g{f}_4$. We first define a skew-symmetric bracket operation~(\ref{eq:cbracket}) on~$\so(8) \times \Ca^3$ and prove that the  algebra given in this way indeed satisfies the Jacobi identity. We then show that this $52$-dimensional real Lie algebra is simple and has an $\ad$-invariant scalar product; thus it is isomorphic to the Lie algebra of the compact Lie group~$\mathrm{F}_4$.
The formula for the bracket of the Lie algebra of the isometry group $\mathrm{F}_4^*$ of~$\M$ can now be obtained in Section~\ref{sec:isom} simply by using the duality of Riemannian symmetric spaces. We describe the restricted root space decomposition of~$\M$, an Iwasawa decomposition of~$\g{g}^*$, and a maximal parabolic subalgebra of~$\g{g}^*$ in the framework of our model for~$\g{f}_4^*$.
In Section~\ref{sec:totgeod} we study totally geodesic subspaces of~$\M$ and write down Lie triple systems corresponding to each congruence class.
We recall a criterion for polarity of an isometric Lie group action on a non-compact Riemannian symmetric space in Section~\ref{sec:polact} and give some examples of polar actions, in particular, we find the new Example~\ref{ex:coh2nilp}.
In Section~\ref{sec:totgorb} we classify polar actions on~$\M$ leaving a totally geodesic subspace invariant.
We mention some open questions and possible generalizations of this work in the last section.


\section{\texorpdfstring{Octonions and~$\so(8)$}{Octonions and so(8)}}\label{sec:octonions}


Recall that the \emph{octonions}~$\Ca$, also called the \emph{Cayley numbers}, are an $8$-dimensional real division algebra, which is neither commutative nor associative. Let $e_0, \dots, e_7 \in \Ca$ be basis vectors such that $e := e_0 = 1$ is the multiplicative identity, $e_1^2 = \dots = e_7^2 = -1$, and
\begin{equation}\label{eq:omult}
\begin{array}{llll}
  e_1e_2=e_3,\quad & e_1e_4=e_5,\quad & e_2e_4=e_6,\quad & e_3e_4=e_7, \\
  e_5e_3=e_6, & e_6e_1=e_7, & e_7e_2=e_5. &
\end{array}
\end{equation}
Using this basis, we identify the Cayley numbers $\Ca$ with $\R^8$.
For $a \in \Ca$, let $L_a$ and $R_a$ be the maps $\R^8 \to \R^8$ given as left and right multiplication by~$a$, i.e.\ $L_a(x) = ax$, $R_a(x) = xa$.
Recall that the octonions are an \emph{alternative} algebra, i.e.\ the alternative laws $(xx)y=x(xy)$ and $(xy)y=x(yy)$ hold for all $x,y \in \Ca$; or equivalently, the \emph{associator}, i.e.\ the $\R$-trilinear map $[\cdot,\cdot,\cdot] \colon \Ca \times \Ca \times \Ca \to \Ca$, defined by $[a,b,c] = (ab)c-a(bc)$, is an alternating map.
Using the alternative laws, the whole octonionic multiplication table may be recovered from the above identities. Let octonionic \emph{conjugation} $\gamma \colon \Ca \to \Ca,\; x \mapsto \bar x$ be the $\R$-linear map defined by
\[
\bar e_i =
\left\{
  \begin{array}{ll}
    e_0, & \hbox{if~$i = 0$;} \\
    -e_i, & \hbox{if~$i \ge 1$.}
  \end{array}
\right..
\]
The map $\gamma$ is an involutive antiautomorphism of the Cayley numbers, i.e.\ we have $\gamma^2 (x)= x$ and $\overline{xy} = \bar y \bar x$ for all $x,y \in \Ca$.
Let the \emph{pure part} of an octonion be defined by $\Pu(x) = \frac12(x-\bar x)$. Let the \emph{real part} of an octonion~$x$ be the real number~$\Re(x)$ defined by $x = \Re(x)e+\Pu(x)$.

Let $\so(8)$ be the Lie algebra of skew symmetric real $8 \times 8$-matrices with the commutator of matrices~$[A,B] = AB-BA$ as the bracket. We identify $\R^8 \wedge \R^8$  with  $\so(8)$ using the definition
\[
x \wedge y = x y^t - y x^t,
\]
where, on the right hand side, we consider elements of~$\Ca$ as column vectors in~$\R^8$, $x^t$~denotes the row vector which is the transpose of~$x$, and the usual matrix multiplication is understood.

Following Freudenthal~\cite{freudenthal} and Murakami~\cite[\S2]{murakami}, we define
$\pi \in \Aut(\so(8))$ by
\[
\pi(a \wedge b) = \tfrac12 L_b \circ L_a \hbox{~for~$a \in \Pu(\Ca), b \in \Ca$}.
\]
It can be checked by a direct (if somewhat cumbersome) calculation that~$\pi$ is actually an automorphism of~$\so(8)$.
Furthermore, we define another automorphism~$\kappa$ of~$\so(8)$ by $\kappa(a \wedge b) = \bar a \wedge \bar b$, or equivalently, by
\[
\kappa(A)(x) = \overline{A(\bar x)}
\]
Finally, let
\[
\lambda := \pi \circ \kappa \in \Aut(\so(8)),
\]
then we have $\pi^2 = \kappa^2 = \lambda^3 = 1$ and $\kappa \circ \lambda^2 = \lambda \circ \kappa = \pi$, see~\cite[\S2, Thm.~2]{murakami}.
More explicitly, the automorphism~$\lambda$ is given by
\[
\lambda(a \wedge b) = \tfrac12 L_{\bar b} \circ L_{\bar a} \hbox{~for~$a \in \Pu(\Ca), b \in \Ca$}
\]
and it follows that its square $\lambda^2 = \kappa \circ \pi$ is given by
\[
\lambda^2(a \wedge b) = \tfrac12 R_{\bar b} \circ R_{\bar a} \hbox{~for~$a \in \Pu(\Ca), b \in \Ca$}
\]
since $\lambda^2(a \wedge b)(x) = \kappa(\pi(a \wedge b))(x) = \frac12 \kappa (L_b \circ L_a)(x) =  \frac12 \overline{b(a \bar x)} = \frac12 (x \bar a)\bar b.$

Let $T_a = R_a + L_a$ for $a \in \Ca$, then (cf.~\cite[p.188]{murakami}) we have for $i \ge 1$ that
\[
T_{e_i} (x) = x e_i + e_i x =
\left\{
  \begin{array}{ll}
    -2e_0, & \hbox{if~$x=e_i$;} \\
    2e_i, & \hbox{if~$x=e_0$;} \\
    0, & \hbox{if~$x=e_j$, $0 \neq j \neq i$;}
  \end{array}
\right.
\]
and it follows that
\begin{equation}\label{eq:Gij}
T_{e_i}=2e_ie_0^t-2e_0e_i^t = 2e_i \wedge e_0.
\end{equation}

\begin{lemma}\label{lm:sonine}
Let $\so(8) \times \R^8$ be equipped with the binary operation given by
\[
[(A,x),(B,y)] = (AB-BA-4x \wedge y, Ay-Bx).
\]
Then $\so(8) \times \R^8$ is a real Lie algebra isomorphic to~$\so(9)$.
\end{lemma}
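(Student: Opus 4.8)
The plan is to realise $\so(9)$ concretely as a Lie algebra of block matrices and to transport its bracket to $\so(8)\times\R^8$ along an explicit linear isomorphism. Every skew-symmetric real $9\times 9$ matrix can be written uniquely as $\begin{pmatrix} A & v \\ -v^t & 0 \end{pmatrix}$ with $A\in\so(8)$ and $v\in\R^8$ a column vector (the last row and the lower right entry being forced by skew-symmetry), so I would define
\[
\phi \colon \so(8)\times\R^8 \longrightarrow \so(9), \qquad \phi(A,x) = \begin{pmatrix} A & 2x \\ -2x^t & 0 \end{pmatrix},
\]
which is manifestly $\R$-linear and bijective, with inverse $\begin{pmatrix} A & v \\ -v^t & 0 \end{pmatrix} \mapsto (A,\tfrac12 v)$. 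Since the bracket in the statement is visibly $\R$-bilinear and skew-symmetric, once $\phi$ is shown to intertwine it with the matrix commutator of $\so(9)$ it follows at once that this bracket satisfies the Jacobi identity, hence that $\so(8)\times\R^8$ with it is a Lie algebra, and that $\phi$ is an isomorphism onto $\so(9)$.

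The remaining step is the block-matrix computation of $[\phi(A,x),\phi(B,y)]$. Multiplying out, the upper left block of $\phi(A,x)\phi(B,y) - \phi(B,y)\phi(A,x)$ is $AB-BA - 4(xy^t - yx^t)$, the off-diagonal blocks are $2(Ay-Bx)$ and $-2(Ay-Bx)^t$ (they are transposes of each other up to sign because $A^t = -A$ and $B^t = -B$), and the lower right entry is $-4x^ty + 4y^tx = 0$ since $x^ty = y^tx$. Recalling the identification $x\wedge y = xy^t - yx^t$ of $\R^8\wedge\R^8$ with $\so(8)$, this says exactly
\[
[\phi(A,x),\phi(B,y)] = \begin{pmatrix} AB-BA-4\,x\wedge y & 2(Ay-Bx) \\ -2(Ay-Bx)^t & 0 \end{pmatrix} = \phi\bigl(AB-BA-4\,x\wedge y,\ Ay-Bx\bigr),
\]
which is $\phi$ applied to the bracket $[(A,x),(B,y)]$, as required.

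There is no genuine obstacle here; the argument is entirely elementary. The only things to watch are the normalisation — the coefficient $4$ in the stated bracket is precisely what the factor $2$ in the off-diagonal entries of $\phi$ produces, and one could equally well first note that the unscaled embedding $(A,v)\mapsto\begin{pmatrix} A & v \\ -v^t & 0\end{pmatrix}$ realises the bracket $(AB-BA-v\wedge w,\,Aw-Bv)$ and then rescale — and the bookkeeping of signs in $x\wedge y = xy^t-yx^t$ and in the matrix commutator.
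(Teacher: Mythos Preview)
Your proof is correct and follows exactly the same route as the paper: define the map $\phi(A,x)=\begin{pmatrix}A & 2x\\ -2x^t & 0\end{pmatrix}$ into $\so(9)$ and verify by block-matrix commutator that it carries the given bracket to the matrix commutator. Your write-up is in fact slightly more careful than the paper's, since you explicitly check bijectivity, the vanishing of the lower right entry, and the consistency of the two off-diagonal blocks.
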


\begin{proof}
We check that an isomorphism $\so(8) \times \R^8 \to \so(9)$ is given by the map
\[
(A,x) \mapsto
\begin{pmatrix}
  A & 2 x \\
  -2 x^t & 0 \\
\end{pmatrix}.
\]
Computing the bracket in~$\so(9)$ we get
\begin{align*}
\left[\begin{pmatrix}
  A & 2 x \\
  -2 x^t & 0 \\
\end{pmatrix},
\begin{pmatrix}
  B & 2 y \\
  -2 y^t & 0 \\
\end{pmatrix}\right]=
\begin{pmatrix}
  AB-BA-4xy^t+4yx^t & 2Ay-2Bx \\
  -2 x^tB +2y^tA& 0 \\
\end{pmatrix}.
\end{align*}
This shows that the map defined above is indeed an automorphism.
\end{proof}


\section{\texorpdfstring{The compact Lie algebra~$\g{f}_4$}{The compact Lie algebra f4}}\label{sec:f4cpt}


In this section, we will describe an explicit construction of $\g{f}_4$ which is based on the inclusion $\Spin(8) \subset \rm F_4$. It is well known, see e.g.~\cite[Tables~25, 26]{dynkin1}, that the isotropy representation of the homogeneous space $\rm F_4 / \Spin(8)$ is the direct sum of the three mutually inequivalent $8$-dimensional irreducible representations of~$\Spin(8)$. These representations can be conveniently described using octonions, see~\cite{murakami}, and we will define a Lie algebra structure on ${\mathcal A} = \so(8) \times \Ca^3$ such that the action of $\so(8)$ on~$\Ca \times \Ca \times \Ca$ is equivalent to $\rho \oplus \rho \circ \lambda \oplus \rho \circ \lambda^2$, where $\rho$ is the standard representation of $\so(8)$, and such that
\[
\tau \colon (A,x,y,z) \mapsto (\lambda(A),y,z,x)
\]
is an automorphism of order~3 of ${\mathcal A}$.

Let us define a skew-symmetric bracket \[[\cdot,\cdot]\colon {\mathcal A} \times {\mathcal A} \to {\mathcal A}\] as follows.
The bracket on $\so(8)$ is defined in the obvious way, i.e.\
\[
[(A,0,0,0),(B,0,0,0)] = (AB-BA,0,0,0).
\]
We identify $\Ca$ with $\R^8$ using the basis $e_0, \dots, e_7$ as above and let
\[
[(A,0,0,0),(0,x,0,0)] = (0,Ax,0,0).
\]
Using the identification $\so(8) = \R^8 \wedge \R^8$ as above, we may define the bracket on $\{0\} \times \Ca \times \{0\} \times \{0\}$ by
\[
[(0,x,0,0),(0,y,0,0)] = (-4x \wedge y,0,0,0).
\]
In this way, $\so(8) \times \Ca \times \{0\} \times \{0\}$ becomes a subalgebra isomorphic to~$\so(9)$, see Lemma~\ref{lm:sonine}.
We further define, using octonionic multiplication and conjugation,
\begin{equation*}\label{eq:xybrak}
[(0,x,0,0),(0,0,y,0)] = (0,0,0,\overline{xy}).
\end{equation*}
Now all brackets on ${\mathcal A}$ are defined by requiring that the map $\tau$ be an algebra automorphism with respect to the bracket operation and extending to a bilinear and skew symmetric map ${\mathcal A} \times {\mathcal A} \to {\mathcal A}$. An explicit formula for the bracket operation is given in the statement of the following theorem.

\begin{theorem}\label{thm:f4jacobi}
The bracket operation on ${\mathcal A}$as defined above, i.e.\ the map
\begin{align}\label{eq:cbracket}
[(A,u,v,w),(B,x,y,z)] = (C,r,s,t)
\end{align}
where
\begin{align*}
C &= AB-BA-4u \wedge x - 4\lambda^2(v \wedge y) - 4\lambda(w \wedge z),\\
r &= Ax-Bu+\overline{vz}-\overline{yw},\\
s &= \lambda(A)y-\lambda(B)v+\overline{wx}-\overline{zu},\\
t &= \lambda^2(A)z-\lambda^2(B)w+\overline{uy}-\overline{xv},
\end{align*}
is $\R$-bilinear, skew symmetric and satisfies the Jacobi identity. The real Lie algebra $\left({\mathcal A},[{\cdot},{\cdot}]\right)$ defined in this fashion is isomorphic to the Lie algebra of the compact exceptional simple Lie group of type~$\rm F_4$.
\end{theorem}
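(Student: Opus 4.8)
The plan is to verify the two assertions of Theorem~\ref{thm:f4jacobi} separately: first that $({\mathcal A},[\cdot,\cdot])$ is a Lie algebra (bilinearity and skew-symmetry being built into the definition, the content is the Jacobi identity), and then that it is the compact real form of type $\mathrm{F}_4$. For the Jacobi identity, the key structural observation is that the bracket was \emph{defined} by declaring $\tau$ to be an order-$3$ automorphism starting from the $\so(9)$-subalgebra $\so(8)\times\Ca\times\{0\}\times\{0\}$ (which is genuinely associative-Lie by Lemma~\ref{lm:sonine}) together with the single cross-bracket $[(0,x,0,0),(0,0,y,0)]=(0,0,0,\overline{xy})$. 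So I would organize the Jacobi check by the $\mathbb{Z}/3$-grading-like decomposition ${\mathcal A}=\g{k}\oplus V_1\oplus V_2\oplus V_3$ with $\g{k}=\so(8)$ and $V_i$ the three copies of $\Ca$. Since $\tau$ permutes the $V_i$ cyclically and fixes the "type" of each Jacobi triple, it suffices to check one representative Jacobian $J(X,Y,Z)=[[X,Y],Z]+[[Y,Z],X]+[[Z,X],Y]=0$ from each $\tau$-orbit of triples of homogeneous components. The cases are: (a) all three in $\g{k}$ — trivial; (b) two in $\g{k}$, one in some $V_i$ — this is just the statement that the $\so(8)$-action on each $V_i$ is a representation, true by construction since it's $\rho\circ\lambda^{i-1}$ and $\lambda\in\Aut(\so(8))$; (c) one in $\g{k}$, two in the same $V_i$ — equivariance of $x\wedge y$ under $\so(8)$, again automatic from the wedge description; (d) one in $\g{k}$, one in $V_i$, one in $V_j$ with $i\neq j$ — this asks that $A\in\so(8)$ act as a derivation of the map $(x,y)\mapsto\overline{xy}$ suitably twisted by $\lambda$, i.e.\ a triality identity; (e) one in each of $V_1,V_2,V_3$; (f) two in one $V_i$, one in another $V_j$; (g) all three in the same $V_i$ — here one reduces to the $\so(9)$ case after applying $\tau$.

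The genuinely nontrivial cases are (d), (e), (f), (g), and they are exactly where the \emph{triality property} of $\Spin(8)$ enters — concretely, the principle of triality says that for $A\in\so(8)$ there exist $A_1=A$, $A_2=\lambda(A)$, $A_3=\lambda^2(A)$ with $A_1(xy)=A_2(x)\,y+x\,A_3(y)$ for all $x,y\in\Ca$ (and its cyclic companions), together with the Moufang/alternative identities and the norm-preservation $\langle xy,z\rangle=\langle y,\bar x z\rangle$ that let one move conjugations around. I would record the precise triality identities I need as a short preliminary lemma (deriving them from $\pi,\kappa,\lambda$ as defined in Section~\ref{sec:octonions} via the explicit formulas $\pi(a\wedge b)=\tfrac12 L_b L_a$ etc.), and then case (d) becomes a one-line consequence, case (g) reduces by $\tau$ to computing $J$ for three elements of $\Ca$ inside $\so(8)\times\Ca$, i.e.\ inside $\so(9)$, which is associative hence Jacobi holds, and cases (e)--(f) become a finite bookkeeping of octonion products of the form $\overline{(\overline{xy})z}$ versus $x\wedge\,$-terms acting on a third octonion, all collapsed using the alternative laws, the identity $T_{e_i}=2e_i\wedge e_0$, and the $\lambda$-twisting. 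This is the \textbf{main obstacle}: case (e) (one vector from each $V_i$) produces a $C$-component which is a sum of three $\lambda$-twisted wedges $\overline{xy}\wedge z$ (and cyclic) that must cancel, and the $V$-components pair octonion triple products against double-multiplication operators $L_bL_a$; verifying the cancellation honestly requires the triality relation in the precise form "$\lambda^2(v\wedge y)$ acts on $z$ as the appropriate combination of $L,R$ operators" — this is the one place a direct but lengthy octonionic computation (or an appeal to Murakami~\cite{murakami}) seems unavoidable.

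For the second assertion, that $\left({\mathcal A},[\cdot,\cdot]\right)$ is the compact $\g{f}_4$, I would proceed as the text outlines in Section~\ref{sec:f4cpt}: (1)~exhibit an $\ad$-invariant positive-definite inner product on ${\mathcal A}$, the natural candidate being $\langle(A,x,y,z),(A',x',y',z')\rangle=-\tfrac{1}{2}\tr(AA')+\langle x,x'\rangle+\langle y,y'\rangle+\langle z,z'\rangle$ (up to a normalizing constant), whose $\ad$-invariance follows from the skew-symmetry of the $\so(8)$-action on each $\Ca$-summand, the skew-symmetry of $A\mapsto A$, the fact that $\lambda$ is an orthogonal automorphism of $\so(8)$, and the norm identity $\langle\overline{xy},z\rangle=\langle y,\overline{xz}\rangle$ for the cross terms — so ${\mathcal A}$ is the Lie algebra of a compact Lie group; (2)~check $\dim{\mathcal A}=28+3\cdot 8=52$; (3)~show ${\mathcal A}$ is simple, e.g.\ by noting any ideal is $\ad(\so(8))$-invariant hence a sum of the irreducibles $\so(8),V_1,V_2,V_3$ (using that $V_1,V_2,V_3$ are the three \emph{inequivalent} $8$-dimensional $\so(8)$-modules and $\so(8)$ is itself irreducible as a module since $\so(8)$ is simple), and then the cross-brackets $[V_i,V_j]\neq 0$, $[V_i,V_i]\neq 0$ force any nonzero ideal to contain everything — so ${\mathcal A}$ is simple of dimension $52$ with a negative-definite Killing form; (4)~conclude by the classification of compact simple Lie algebras that ${\mathcal A}\cong\g{f}_4$, the only compact simple Lie algebra of dimension $52$. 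I expect the simplicity argument and the verification of $\ad$-invariance to be routine given the bracket formula; no obstruction is anticipated there beyond care with constants.
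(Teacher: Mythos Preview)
Your proposal is correct and follows essentially the same architecture as the paper: reduce the Jacobi identity by $\tau$-symmetry and trilinearity to a small list of homogeneous cases, handle the $\so(8)$-plus-one-$V_i$ cases via the $\so(9)$-subalgebra (Lemma~\ref{lm:sonine}), handle the mixed $\so(8)$-$V_i$-$V_j$ case via the infinitesimal triality identity, and then prove compactness and simplicity exactly as you describe (positive-definite $\ad$-invariant form, dimension $52$, $\so(8)$-module decomposition into four inequivalent irreducibles).

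The one substantive difference is in how the hardest cases --- your (e) and (f), with two or three vectors coming from distinct $V_i$'s --- are dispatched. You flag case (e) as the main obstacle and propose a ``direct but lengthy octonionic computation''. The paper avoids this entirely: it first proves (Lemma~\ref{lm:spin8aut}) that the natural $\Spin(8)$-action $\theta\cdot(A,x,y,z)=(\Ad_\theta A,\theta x,\lambda(\theta)y,\lambda^2(\theta)z)$ is by algebra automorphisms of~$\mathcal A$, and then exploits the fact that $\Spin(8)$ acts transitively on $\eS^7\times\eS^7$ (via any two of its inequivalent $8$-dimensional representations) to normalize two of the three octonions to~$e$. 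After this reduction each of your cases (e) and (f) becomes a two- or three-line computation with the single remaining octonion (which can moreover be taken in $\Pu(\Ca)$), using only the identity $T_{e_i}=2e_i\wedge e_0$ and the explicit formulas for $\lambda$, $\lambda^2$. This $\Spin(8)$-transitivity trick is what turns your anticipated ``lengthy'' step into something short; it is worth incorporating, since it also makes the $\ad$-invariance check for the scalar product (your step (1) in the second part) similarly brief.
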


The $\R$-bilinearity and skew symmetry of the bracket are obvious from the definition. To prove the rest of Theorem~\ref{thm:f4jacobi}, we will use Lemmas~\ref{lm:tautheta}, \ref{lm:spin8aut}, \ref{lm:f4jacobi} and~\ref{lm:killing} below.

Let $\rho$ be the standard representation $\rho \colon \Spin(8) \to \SO(8)$. Define an action of~$\Spin(8)$ on $\R^8$ by $\theta(x) := \rho(\theta)(x)$. Since $\Spin(8)$ is simply connected, its automorphism group is canonically isomorphic to the automorphism group of~$\so(8)$. The automorphism of~$\Spin(8)$ thus given by $\lambda \in \Aut(\so(8))$ is also denoted by~$\lambda$. Define an action of~$\Spin(8)$ on $\mathcal A$ by
\[
 \theta  (A,x,y,z) = (\Ad_\theta(A), \theta(x), \lambda(\theta)(y), \lambda^2(\theta)(z) )
\]
for $\theta \in \Spin(8)$.

\begin{lemma}\label{lm:tautheta}
We have
\[
\tau( \theta(A,x,y,z) ) = \lambda(\theta)(\tau(A,x,y,z))
\]
for all $\theta \in \Spin(8)$, $(A,x,y,z) \in \mathcal A$.
\end{lemma}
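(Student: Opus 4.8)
The plan is to verify the identity component by component, exploiting the definition of the $\Spin(8)$-action on $\mathcal{A}$ and the basic relations $\pi^2=\kappa^2=\lambda^3=1$ from Murakami. Write $\theta(A,x,y,z)=(\Ad_\theta A,\,\theta(x),\,\lambda(\theta)(y),\,\lambda^2(\theta)(z))$, so applying $\tau$ gives
\[
\tau(\theta(A,x,y,z)) = \bigl(\lambda(\Ad_\theta A),\,\lambda(\theta)(y),\,\lambda^2(\theta)(z),\,\theta(x)\bigr).
\]
On the other side, $\tau(A,x,y,z)=(\lambda(A),y,z,x)$, and then the action of $\lambda(\theta)$ on this element is, by definition of the $\Spin(8)$-action applied to the group element $\lambda(\theta)$,
\[
\bigl(\Ad_{\lambda(\theta)}(\lambda(A)),\,\lambda(\theta)(y),\,\lambda(\lambda(\theta))(z),\,\lambda^2(\lambda(\theta))(x)\bigr).
\]
So the proof reduces to four scalar identities between the corresponding slots.

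The second slots agree trivially: both are $\lambda(\theta)(y)$. The third slot requires $\lambda^2(\theta)(z)=\lambda(\lambda(\theta))(z)$, which is just the statement that the endomorphism $\lambda(\lambda(\theta))=\lambda^2(\theta)$ of $\R^8$; this follows because $\lambda$ as an automorphism of the simply connected group $\Spin(8)$ is compatible with $\lambda\in\Aut(\so(8))$, and $\rho\circ\lambda=\lambda\circ\rho$ holds at the Lie algebra level by construction, hence $\rho(\lambda(\lambda(\theta)))=\lambda^2(\rho(\theta))=\rho(\lambda^2(\theta))$. The fourth slot needs $\theta(x)=\lambda^2(\lambda(\theta))(x)=\lambda^3(\theta)(x)$, which is exactly the relation $\lambda^3=1$. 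The first slot needs $\lambda(\Ad_\theta(A))=\Ad_{\lambda(\theta)}(\lambda(A))$; this is the equivariance $\lambda\circ\Ad_\theta=\Ad_{\lambda(\theta)}\circ\lambda$, which holds because $\lambda$ is an automorphism of $\Spin(8)$ inducing $\lambda$ on $\so(8)$, so $\Ad_{\lambda(\theta)} = \lambda\circ\Ad_\theta\circ\lambda^{-1}$ on $\so(8)$, and again $\lambda^3=1$ gives $\lambda^{-1}=\lambda^2$, but more directly the relation $\Ad_{\lambda(\theta)}\circ\lambda=\lambda\circ\Ad_\theta$ is immediate from functoriality of $\Ad$ under the automorphism $\lambda$ of $\Spin(8)$.

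I do not expect a genuine obstacle here: every step is a formal consequence of the definitions and of the order-three relation $\lambda^3=1$, together with the fact that an automorphism of the simply connected group $\Spin(8)$ is determined by and compatible with the corresponding automorphism of $\so(8)$. The only point requiring a little care is bookkeeping: keeping straight which copy of $\lambda$ acts (on the group, on $\so(8)$, or as an endomorphism of $\R^8$ via $\rho$) and using $\rho\circ\lambda=\lambda\circ\rho$ consistently. Once the four slots are matched as above, the lemma follows.
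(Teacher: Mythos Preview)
Your proposal is correct and follows essentially the same approach as the paper: expand both sides using the definitions of $\tau$ and the $\Spin(8)$-action, then match the four slots, invoking $\lambda^3=1$ and the naturality relation $\lambda(\Ad_\theta(A))=\Ad_{\lambda(\theta)}(\lambda(A))$. The paper's proof is just a terser version of your computation; your discussion of the third slot via ``$\rho\circ\lambda=\lambda\circ\rho$'' is overkill, since $\lambda(\lambda(\theta))=\lambda^2(\theta)$ is simply composition of the automorphism $\lambda$ with itself at the group level.
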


\begin{proof}
We compute
\begin{align*}
\tau( \theta(A,x,y,z) ) &= \tau(\Ad_\theta(A), \theta(x), \lambda(\theta)(y), \lambda^2(\theta)(z) )=\\
&=(\Ad_{\lambda(\theta)}(\lambda(A)), \lambda(\theta)(y), \lambda^2(\theta)(z),  \theta (x)) =\\
&=\lambda(\theta)(\lambda(A), y, z, x) =\\
&=\lambda(\theta)(\tau(A, x, y, z)),
\end{align*}
where we have used that $\lambda(\Ad_\theta(A))=\Ad_{\lambda(\theta)}(\lambda(A))$ for all $\theta \in \Spin(8)$, $A \in \so(8)$.
\end{proof}

\begin{lemma}\label{lm:spin8aut}
The map \[(A,x,y,z) \mapsto \theta  (A,x,y,z)\] is an automorphism of the real algebra~$\mathcal A$ for any $\theta \in \Spin(8)$.
\end{lemma}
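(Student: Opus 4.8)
The plan is to verify that the linear map $\Phi_\theta\colon\mathcal A\to\mathcal A$, $\Phi_\theta(A,x,y,z)=(\Ad_\theta(A),\theta(x),\lambda(\theta)(y),\lambda^2(\theta)(z))$, respects each of the defining brackets, and then conclude it respects the full bracket~(\ref{eq:cbracket}) by $\R$-bilinearity. Since the whole algebra is generated (as a non-associative algebra with the bracket) by $\so(8)$, $\{0\}\times\Ca\times\{0\}\times\{0\}$ and $\{0\}\times\{0\}\times\Ca\times\{0\}$ together with the order-$3$ automorphism $\tau$, and since Lemma~\ref{lm:tautheta} already tells us how $\Phi_\theta$ interacts with $\tau$, it actually suffices to check compatibility on a few building blocks and then propagate. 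The $\R$-bilinearity reduces everything to the listed generating brackets.

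Concretely, I would proceed block by block. First, on $\so(8)\times\{0\}^3$ the bracket is the matrix commutator and $\Phi_\theta$ acts by $\Ad_\theta$, so compatibility is just $\Ad_\theta[A,B]=[\Ad_\theta A,\Ad_\theta B]$, which holds because $\Ad_\theta$ is a Lie algebra automorphism of $\so(8)$. Next, $[(A,0,0,0),(0,x,0,0)]=(0,Ax,0,0)$: applying $\Phi_\theta$ gives $(0,\theta(Ax),0,0)$, and we need $\theta(Ax)=(\Ad_\theta A)(\theta x)$, i.e. $\rho(\theta)\circ A\circ\rho(\theta)^{-1}=\Ad_\theta A$ on $\R^8$ — which is exactly the definition of $\Ad$ for the standard representation. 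The analogous identities with $\lambda(\theta)$ and $\lambda^2(\theta)$ in the $y$- and $z$-slots use that $\lambda$ (resp.\ $\lambda^2$) is an automorphism and that $\Ad_{\lambda(\theta)}(\lambda(A))=\lambda(\Ad_\theta A)$, already quoted in the proof of Lemma~\ref{lm:tautheta}. For $[(0,x,0,0),(0,y,0,0)]=(-4x\wedge y,0,0,0)$ one needs $\rho(\theta)(x)\wedge\rho(\theta)(y)=\Ad_\theta(x\wedge y)$, which is the standard fact that $g\cdot(x\wedge y)\cdot g^{-1}=(gx)\wedge(gy)$ for $g\in\SO(8)$ under the identification $\so(8)=\R^8\wedge\R^8$.

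The one genuinely nontrivial block is the ``mixing'' bracket $[(0,x,0,0),(0,0,y,0)]=(0,0,0,\overline{xy})$, since here $\Phi_\theta$ acts by $\theta$ on the first octonionic slot, by $\lambda(\theta)$ on the second, and by $\lambda^2(\theta)$ on the output slot. Compatibility is the statement
\[
\lambda^2(\theta)\bigl(\overline{xy}\bigr)=\overline{\theta(x)\cdot\lambda(\theta)(y)}\qquad\text{for all }x,y\in\Ca,\ \theta\in\Spin(8).
\]
This is precisely the infinitesimal/integrated form of the triality relation for $\Spin(8)$ acting on the three $8$-dimensional representations — it is the identity that makes the octonion product an equivariant map $\rho\otimes(\rho\circ\lambda)\to\overline{\rho\circ\lambda^2}$, equivalently $\rho\otimes(\rho\circ\lambda)\to(\rho\circ\lambda^2)^*$. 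I would cite Murakami~\cite[\S2, Thm.~2]{murakami} (or Freudenthal~\cite{freudenthal}) for this; it follows from the characterization of $\pi$, $\kappa$, $\lambda$ given in Section~\ref{sec:octonions}, by checking it at the Lie-algebra level, i.e.\ differentiating to get, for $A\in\so(8)$,
\[
\lambda^2(A)\bigl(\overline{xy}\bigr)=\overline{A(x)\cdot y}+\overline{x\cdot\lambda(A)(y)},
\]
and then integrating along the connected group $\Spin(8)$. This Lie-algebra identity is essentially the ``$r$- and $s$-slot'' consistency already encoded in formula~(\ref{eq:cbracket}), so I expect it to be either quotable or a short direct computation using $\lambda(a\wedge b)=\tfrac12 L_{\bar b}\circ L_{\bar a}$ and $\lambda^2(a\wedge b)=\tfrac12 R_{\bar b}\circ R_{\bar a}$.

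Having checked all the generating brackets, $\R$-bilinearity and skew symmetry give that $\Phi_\theta$ preserves the bracket on all of $\mathcal A$; together with $\Phi_\theta$ being invertible (with inverse $\Phi_{\theta^{-1}}$) this shows $\Phi_\theta\in\Aut(\mathcal A)$, proving the lemma. The main obstacle is purely the triality identity in the mixing block; everything else is formal from $\Ad$ being an automorphism and from the intertwining relations between $\lambda$, $\Ad$, and $\rho$ recorded earlier.
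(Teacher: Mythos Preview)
Your proof is correct and follows essentially the same approach as the paper: a block-by-block verification on the generating brackets, with the only nontrivial step being the triality identity $\lambda^2(\theta)(\overline{xy})=\overline{\theta(x)\cdot\lambda(\theta)(y)}$, and then an appeal to Lemma~\ref{lm:tautheta} together with bilinearity and skew symmetry to cover the remaining cases. The paper cites the integrated triality principle directly from Murakami rather than deriving it infinitesimally and integrating, but this is a cosmetic difference.
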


\begin{proof}
Obviously, we have
\begin{align*}
&[\theta(A,0,0,0),\theta(B,0,0,0)] =
[(\Ad_\theta(A),0,0,0),(\Ad_\theta(B),0,0,0)] = \\
&=(\Ad_\theta([A,B]),0,0,0) =
\theta[(A,0,0,0),(B,0,0,0)].
\end{align*}
Using that $\lambda(\Ad_\theta(A))=\Ad_{\lambda(\theta)}(\lambda(A))$ for all $\theta \in \Spin(8)$, $A \in \so(8)$, we get
\begin{align*}
&[\theta(A,0,0,0),\theta(0,x,y,z)]
=[(\Ad_\theta(A),0,0,0),(0, \theta  (x), \lambda(\theta)  (y), \lambda^2(\theta) (z))] = \\
&= (0,\Ad_\theta(A)(\theta  (x)), \lambda(\Ad_\theta(A))(\lambda(\theta)(y)), \lambda^2(\Ad_\theta(A))(\lambda^2(\theta)(z))) =\\
&=(0,\theta(Ax),\lambda(\theta)(\lambda(A)y), \lambda^2(\theta)(\lambda^2(A)z)) =\\
&=\theta(0,Ax,\lambda(A)y, \lambda^2(A)z)=\theta[(A,0,0,0),(0,x,y,z)].
\end{align*}
By the \emph{principle of triality}, see~\cite[\S2, \S3]{murakami}, we have that
\[
\theta(x)\lambda(\theta)(y) = \kappa\circ\lambda^2(\theta)(xy)
\]
for all $x,y \in \Ca$. From this, we obtain
\begin{align*}
&[\theta(0,u,0,0),\theta(0, 0, y, 0 )]
=[(0, \theta(u),0,0),(0, 0, \lambda(\theta)(y), 0 )]=\\
&= (0,0,0,\overline{\theta(u)\lambda(\theta)(y)})
= (0,0,0,\overline{\kappa\circ\lambda^2(\theta)(uy)})=\\
&= (0,0,0,\lambda^2(\theta)(\overline{uy}))=\theta(0,0,0,\overline{uy})=\theta[(0,u,0,0),(0, 0,y, 0 )],
\end{align*}
where we have used that $\kappa(\theta)(x) = \overline{\theta(\bar x)}$ for all $\theta \in \Spin(8), x \in \Ca$.

Furthermore, we compute
\begin{align*}
&[\theta(0,u,0,0),\theta(0,x,0,0 )]
=[(0, \theta(u),0,0),(0, \theta(x), 0, 0 )]=\\
&= (-4\theta(u)\wedge\theta(x),0,0,0)=\\
&= (-4\theta (u)\theta(x)^t + 4\theta (x) \theta(u)^t,0,0,0)
=\\
&= (-4\Ad_{\theta}(u \wedge x),0,0,0) =
\theta [(0,u,0,0),(0,x,0,0 )].
\end{align*}
The statement of the lemma now follows using Lemma~\ref{lm:tautheta} and the skew symmetry and bilinearity of the bracket.
\end{proof}

\begin{lemma}\label{lm:f4jacobi}
The bracket operation on~$\mathcal A$ satisfies the Jacobi identity, i.e.\ we have
\begin{equation}\label{eq:Jacobi}
[\xi,[\eta,\zeta]]+[\eta,[\zeta,\xi]]+[\zeta,[\xi,\eta]]=0
\end{equation}
for all $\xi, \eta, \zeta \in \mathcal A$.
\end{lemma}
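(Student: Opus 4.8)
The plan is to verify the Jacobi identity by exploiting symmetry to reduce the number of cases that need to be checked by hand. The bracket is built so that both the order-3 automorphism $\tau$ and the $\Spin(8)$-action (Lemmas~\ref{lm:tautheta} and~\ref{lm:spin8aut}) act as automorphisms; in particular the subspace $\so(8) \times \Ca \times \{0\} \times \{0\}$ is a subalgebra isomorphic to $\so(9)$ (Lemma~\ref{lm:sonine}), on which the Jacobi identity therefore holds automatically. So one only has to test the Jacobiator $J(\xi,\eta,\zeta) := [\xi,[\eta,\zeta]]+[\eta,[\zeta,\xi]]+[\zeta,[\xi,\eta]]$ on triples of elements drawn from the three summands $\so(8)$, $\Ca_1 := \{0\}\times\Ca\times\{0\}\times\{0\}$, $\Ca_2$, $\Ca_3$, and the $\tau$-symmetry cyclically permutes $(\Ca_1,\Ca_2,\Ca_3)$ while fixing $\so(8)$. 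Since $J$ is trilinear and totally antisymmetric, it suffices to consider unordered multisets of ``types''; modulo $\tau$ these are: $(\so,\so,\so)$, $(\so,\so,\Ca_1)$, $(\so,\Ca_1,\Ca_1)$, $(\so,\Ca_1,\Ca_2)$, $(\Ca_1,\Ca_1,\Ca_1)$, $(\Ca_1,\Ca_1,\Ca_2)$, and $(\Ca_1,\Ca_2,\Ca_3)$. The first five of these lie (up to applying $\tau$) inside the $\so(9)$-subalgebra, hence vanish; so the genuine work is the two remaining cases $(\Ca_1,\Ca_1,\Ca_2)$ and $(\Ca_1,\Ca_2,\Ca_3)$.

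For the case $(\Ca_1,\Ca_1,\Ca_2)$ I would take $\xi=(0,u,0,0)$, $\eta=(0,x,0,0)$, $\zeta=(0,0,y,0)$ and expand. One term produces $[(-4u\wedge x,0,0,0),(0,0,y,0)] = (0,0,-4\lambda(u\wedge x)y,0)$ — here using that $\so(8)$ acts on $\Ca_2$ through $\rho\circ\lambda$, as built into formula~(\ref{eq:cbracket}); the other two terms produce elements of $\Ca_3$ of the form $(0,0,0,\overline{(\,\cdot\,)y}\text{-type expressions})$ but, cross-checking degrees, one finds the $\Ca_3$-contributions come from brackets like $[(0,u,0,0),(0,0,0,\overline{xy})]$. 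Tracking the summands carefully, the identity reduces to an octonionic identity expressing $\lambda(u\wedge x)y$ — equivalently, after unwinding $\lambda(a\wedge b) = \tfrac12 L_{\bar b}\circ L_{\bar a}$ for $a\in\Pu(\Ca)$ — in terms of associators and the alternative laws. I expect this to collapse using the Moufang identities and the fact that the associator $[a,b,c]$ is alternating.

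The hardest case is $(\Ca_1,\Ca_2,\Ca_3)$: take $\xi=(0,u,0,0)$, $\eta=(0,0,v,0)$, $\zeta=(0,0,0,w)$. Each of the three double brackets lands back in $\so(8)$, e.g. $[\eta,\zeta]\in\Ca_1$ via $[(0,0,v,0),(0,0,0,w)] = (0,\overline{vw},0,0)$ (reading off the $r$-component of~(\ref{eq:cbracket}) with the roles cyclically rotated), then $[\xi,[\eta,\zeta]] = (-4 u\wedge\overline{vw},0,0,0)$, and similarly for the cyclic terms which involve $\lambda^2(v\wedge\overline{wu})$ and $\lambda(w\wedge\overline{uv})$. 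So the Jacobi identity here becomes the single $\so(8)$-valued identity
\begin{equation*}
u\wedge\overline{vw} + \lambda^2(v\wedge\overline{wu}) + \lambda(w\wedge\overline{uv}) = 0
\qquad\text{for all }u,v,w\in\Ca,
\end{equation*}
which is exactly the sort of ``built out of $\so(8)$ and its three $8$-dimensional representations'' relation Baez alludes to. I would prove it by applying both sides to an arbitrary $p\in\Ca$, rewriting $\lambda$ and $\lambda^2$ via the explicit left/right-multiplication formulas from Section~\ref{sec:octonions}, and reducing to a multilinear octonionic identity; by $\Spin(8)$-equivariance (Lemma~\ref{lm:spin8aut}) and trilinearity it then suffices to verify it on basis vectors $e_i$, which — invoking triality as in the proof of Lemma~\ref{lm:spin8aut}, or Murakami~\cite[\S2,\S3]{murakami} — is a finite check. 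This $\so(8)$-identity, together with the reduction above, completes the proof of Lemma~\ref{lm:f4jacobi}; I anticipate that assembling the correct form of this identity (getting all the $\lambda$, $\lambda^2$, conjugations, and signs consistent with~(\ref{eq:cbracket})) is the main obstacle, whereas the subsequent verification is routine.
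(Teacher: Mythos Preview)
Your reduction strategy via trilinearity, the $\tau$-automorphism, and the $\Spin(8)$-action is exactly the paper's, and your treatment of the two ``hard'' cases $(\Ca_1,\Ca_1,\Ca_2)$ and $(\Ca_1,\Ca_2,\Ca_3)$ is essentially correct (for the latter, your displayed $\so(8)$-identity is precisely what the paper proves, though the paper uses the $\Spin(8)$-action more aggressively to normalize $u=v=e$ rather than checking on a basis).

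However, there is a genuine gap in your case analysis. You claim that the first five types, in particular $(\so,\Ca_1,\Ca_2)$, ``lie (up to applying $\tau$) inside the $\so(9)$-subalgebra.'' This is false: the $\so(9)$-subalgebra is $\so(8)\times\Ca_i\times\{0\}\times\{0\}$ for a \emph{single}~$i$, and no power of~$\tau$ can place elements from $\Ca_1$ and $\Ca_2$ simultaneously into the same $\Ca_i$-slot. In the paper this is treated as a separate case: with $\xi=(A,0,0,0)$, $\eta=(0,x,0,0)$, $\zeta=(0,0,y,0)$ one computes that the Jacobiator lands in~$\Ca_3$ and vanishes if and only if
\[
\kappa\circ\lambda^2(A)(xy) \;=\; A(x)\cdot y + x\cdot\lambda(A)(y),
\]
which is exactly the \emph{infinitesimal principle of triality} (\cite[\S2, Thm.~1]{murakami}). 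This is the identity that says the $\so(8)$-action on the three octonion factors is by derivations of the trilinear structure; it does not follow from the $\so(9)$-subalgebra and must be invoked explicitly.

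There is also a smaller omission in your list: the multiset $(\Ca_1,\Ca_2,\Ca_2)$ is \emph{not} in the $\tau$-orbit of $(\Ca_1,\Ca_1,\Ca_2)$, since the cyclic action of~$\tau$ on $\{\Ca_1,\Ca_2,\Ca_3\}$ splits the six ``two-and-one'' multisets into two orbits of size three. The paper handles this extra case by a computation parallel to $(\Ca_1,\Ca_1,\Ca_2)$, again normalizing two of the octonions to~$e$ via the $\Spin(8)$-action.
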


\begin{proof}
By the trilinearity of the left-hand side, it suffices to show that (\ref{eq:Jacobi}) holds for all triples of vectors $(\xi,\eta,\zeta)$ where each of the elements $\xi,\eta,\zeta$ is a vector from one of the factors of $\so(8) \times \Ca \times \Ca \times \Ca$. Let us assume the vectors $\xi,\eta,\zeta$ are chosen in this fashion.
We have not yet shown that~$\mathcal A$ is a Lie algebra with the bracket as defined above, but in any case, the map~$\tau$ certainly is an automorphism with respect to whatever real algebra structure is defined on~$\mathcal A$ by the bracket operation~(\ref{eq:cbracket}). Using this fact, we may assume that the three vectors $\xi,\eta,\zeta$ are either from $\so(8) \times \Ca \times \Ca \times \{0\}$ or from $\{0\} \times \Ca \times \Ca \times \Ca$.

\begin{enumerate}

\item First assume $\xi,\eta,\zeta \in \so(8) \times \Ca \times \Ca \times \{0\}.$
If the three vectors $\xi,\eta,\zeta$ are from $\so(8) \times \Ca \times \{0\} \times \{0\}$ or $\so(8) \times \{0\} \times \Ca \times \{0\}$, it follows from Lemma~\ref{lm:sonine} (using the fact that $\tau$ is an automorphism of~$\mathcal A$ in the second case) that the Jacobi identity holds for $\xi,\eta,\zeta$. Thus may assume
$
\xi = (A,0,0,0), 
\eta = (0,x,0,0), 
\zeta = (0,0,y,0).
$ 
To verify the Jacobi identity in this special case, we compute:
\begin{align*}
[(A,0,0,0),[(0,x,0,0),(0,0,y,0)]]&=(0,0,0,\lambda^2(A)(\overline{xy})), \\
[(0,x,0,0),[(0,0,y,0),(A,0,0,0)]]&=(0,0,0,-\overline{x \cdot \lambda(A)(y)}),\\
[(0,0,y,0),[(A,0,0,0),(0,x,0,0)]]&=(0,0,0,-\overline{A(x) \cdot y}).
\end{align*}
The sum of the fourth components of these elements is the conjugate of
\[
\kappa \circ \lambda^2(A)(xy)-x \cdot \lambda(A)(y) -A(x) \cdot y.
\]
It follows from the \emph{infinitesimal principle of triality}, see~\cite[\S2, Thm.~1]{murakami}, that the above expression is zero for all $A \in \so(8)$ and all $x,y \in \Ca$.

\item Now assume $\xi,\eta,\zeta \in \{0\} \times \Ca \times \Ca \times \Ca.$
First consider the subcase where
$
\xi = (0,x,0,0), 
\eta = (0,0,y,0), 
\zeta = (0,0,0,z)
$
are unit vectors.
The group $\Spin(8)$ acts as a group of automorphisms on~$\mathcal A$ by Lemma~\ref{lm:spin8aut} and we may use this action to assume $x=e$ and $y=e$, since $\Spin(8)$ acts transitively on the product of unit spheres~$\eS^7 \times \eS^7 \subset \R^8 \oplus \R^8$ by the sum of any two of its inequivalent irreducible 8-dimensional representations.
Using this assumption, we compute
\begin{align*}
[(0,x,0,0),[(0,0,y,0),(0,0,0,z)]] &= (-4e \wedge \bar z,0,0,0), \\
[(0,0,y,0),[(0,0,0,z),(0,x,0,0)]] &= (-4\lambda^2(e \wedge \bar z),0,0,0),\\
[(0,0,0,z),[(0,x,0,0),(0,0,y,0)]] &= (-4\lambda(z \wedge e),0,0,0).
\end{align*}
Since all three terms are zero if $z=e$, we may assume that $z \in \Pu(\Ca)$.
The sum of the first components of these elements is then
\begin{align*}
-&4e \wedge \bar z - 4 \lambda^2(e \wedge \bar z) -4\lambda(z \wedge e) = \\
&=4e \wedge z + 4\lambda^2(\bar z \wedge e)-4\lambda(z \wedge e) = \\
&=4e \wedge z + 2 R_{e} \circ R_z - 2 L_{e} \circ L_{\bar z} = 4e \wedge z +2R_z+2L_z=0,
\end{align*}
which is zero by~(\ref{eq:Gij}).
Now consider
$
\xi = (0,x,0,0), 
\eta = (0,y,0,0), 
\zeta = (0,0,z,0).
$
We compute
\begin{align*}
[(0,x,0,0),[(0,y,0,0),(0,0,z,0)]] &= (0,0,-\bar x(yz),0), \\
[(0,y,0,0),[(0,0,z,0),(0,x,0,0)]] &= (0,0,\bar y (xz),0),\\
[(0,0,z,0),[(0,x,0,0),(0,y,0,0)]] &= (0,0,4\lambda(x \wedge y)z,0).
\end{align*}
Using the $\Spin(8)$-action again, we may assume $x=z=e$ and $y \in \Pu(\Ca)$. Then we have $-\bar x(yz)=-y$, $\bar y (xz)=-y$, and
\[
4\lambda(x \wedge y)z=4\lambda(e \wedge y)e=-4\lambda(y \wedge e)e=-2L_{e} \circ L_{\bar y}(e)=2y,
\]
showing that the sum of the above elements is zero.
Finally, assume
$
\xi = (0,x,0,0), 
\eta = (0,0,y,0), 
\zeta = (0,0,z,0).
$ 
Then we obtain
\begin{align*}
[(0,x,0,0),[(0,0,y,0),(0,0,z,0)]] &= (0,4\lambda^2(y \wedge z)x,0,0), \\
[(0,0,y,0),[(0,0,z,0),(0,x,0,0)]] &= (0,-(xz)\bar y ,0,0),\\
[(0,0,z,0),[(0,x,0,0),(0,0,y,0)]] &= (0,(xy)\bar z,0,0).
\end{align*}
Once more, using the $\Spin(8)$-action, we may assume that $x=y=e$ and $z \in \Pu(\Ca)$. Then we have \[
4\lambda^2(y \wedge z)x= 4\lambda^2(e \wedge z)e= -4\lambda^2(z \wedge e)e=-2 R_{e} \circ R_{\bar z}(e)=2z,
\]
and $-(xz)\bar y=-z$, $(xy)\bar z=-z$. This shows that the sum of the above three elements is zero.

\end{enumerate}
Using the skew-symmetry of the bracket and the fact that $\tau$ is an automorphism of~$\mathcal A$, it now follows that the Jacobi identity holds for the bracket operation on~$\mathcal A$.
\end{proof}

We define a scalar product on the Lie algebra~$\mathcal A$ by
\begin{equation}\label{eq:kill}
\langle (A,u,v,w) , (B,x,y,z) \rangle = 8(u^tx+v^ty+w^tz)-\tr(AB).
\end{equation}
This scalar product is invariant under~$\tau$. It is also $\Spin(8)$-invariant, i.e.\ we have
\[
\langle \theta(A,u,v,w) , \theta(B,x,y,z) \rangle = \langle (A,u,v,w) , (B,x,y,z) \rangle
\]
for all $(A,u,v,w),(B,x,y,z)\in\mathcal A$ and all $\theta\in\Spin(8)$.

\begin{lemma}\label{lm:killing}
The scalar product~(\ref{eq:kill}) on the Lie algebra~$\mathcal A$ is $\ad$-invariant.
\end{lemma}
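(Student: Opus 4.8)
The plan is to verify $\ad$-invariance, i.e. $\langle [\xi,\eta],\zeta\rangle + \langle \eta,[\xi,\zeta]\rangle = 0$, by the same kind of reduction that was used for the Jacobi identity. By trilinearity it suffices to check the identity when each of $\xi,\eta,\zeta$ lies in one of the four summands $\so(8),\Ca,\Ca,\Ca$. Since the scalar product~(\ref{eq:kill}) is $\tau$-invariant and $\tau$ is an automorphism of~$\mathcal A$ (Lemma~\ref{lm:f4jacobi} together with the definition of the bracket), I can cyclically permute the three octonionic slots; this cuts the number of essentially different slot-patterns down to a short list. Likewise, whenever the pattern only involves $\so(8)$ and a single octonionic factor, the computation takes place inside the subalgebra $\so(8)\times\Ca\times\{0\}\times\{0\}\cong\so(9)$ of Lemma~\ref{lm:sonine}, and there $\ad$-invariance of the form $8\,u^t x - \tr(AB)$ is the standard invariance of (a multiple of) the trace form on $\so(9)$, so nothing new is needed.

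The remaining cases are: (a) all three in $\so(8)$ — here the statement is $\tr([A,B]C) + \tr(B[A,C]) = 0$, the familiar invariance of the trace form; (b) two in $\so(8)$ and one octonionic, handled by $\so(9)$ as above after using $\tau$; (c) one factor in $\so(8)$ and two octonionic factors in distinct slots, e.g. $\xi=(A,0,0,0)$, $\eta=(0,x,0,0)$, $\zeta=(0,0,y,0)$: then $[\xi,\eta]=(0,Ax,0,0)$, $[\xi,\zeta]=(0,0,\lambda(A)y,0)$, $[\eta,\zeta]=(0,0,0,\overline{xy})$, and the identity reduces to $8\,(Ax)^t\cdot 0 + \dots = 0$ — more precisely one gets terms pairing the $\Ca^3$-components, and the nontrivial relation that survives is again a consequence of the infinitesimal principle of triality (\cite[\S2, Thm.~1]{murakami}), exactly the identity $\langle \lambda^2(A)(xy),\,\cdot\,\rangle$ that appeared in the proof of Lemma~\ref{lm:f4jacobi}; (d) all three octonionic. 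In case~(d), using the $\Spin(8)$-action (Lemma~\ref{lm:spin8aut}), which preserves the scalar product, I can normalize two of the octonions to be $e$, reducing each sub-subcase to a short explicit octonionic computation identical in spirit to those already carried out in Lemma~\ref{lm:f4jacobi}; the cross-terms between $u\wedge x$-type contributions to the $\so(8)$-slot and the pure octonionic products are controlled by~(\ref{eq:Gij}).

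Alternatively — and this may be cleaner to write — I would argue more structurally: $\mathcal A$ is a Lie algebra (Lemmas~\ref{lm:tautheta}, \ref{lm:spin8aut}, \ref{lm:f4jacobi}), its bracket has $\Spin(8)$ and $\tau$ as automorphisms, and the subspace $\so(8)$ acts on $\Ca^3$ by $\rho\oplus\rho\lambda\oplus\rho\lambda^2$. The bilinear form~(\ref{eq:kill}) restricted to $\so(8)$ is (a multiple of) the Killing form of $\so(8)$, hence $\ad_{\so(8)}$-invariant, and its restriction to each $\Ca$ is the $\SO(8)$-invariant inner product, so the $\so(8)$-part of the $\ad$-invariance identity is immediate. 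What then remains is to check $\langle [\xi,\eta],\zeta\rangle + \langle \eta,[\xi,\zeta]\rangle = 0$ only when $\xi$ is octonionic, and by $\tau$-symmetry one may take $\xi=(0,x,0,0)$; this is a finite check of a handful of slot-patterns, each reducible by $\Spin(8)$ to an octonion identity.

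The main obstacle will be case~(d), the all-octonionic patterns, because there the $\so(8)$-component of a bracket is a wedge product $u\wedge x$ and one must correctly pair it under $-\tr(\,\cdot\,)$ against another wedge (or against the image of a wedge under $\lambda,\lambda^2$), using $\tr((a\wedge b)(c\wedge d)) = 2(a^td\,b^tc - a^tc\,b^td)$ and the behaviour of $\lambda$ on wedges; keeping the triality twists in the right slots is where a sign error is most likely to creep in. Once the $\Spin(8)$-normalization $x=y=e$ is applied, though, this collapses to the same computations as in Lemma~\ref{lm:f4jacobi}, so the risk is bookkeeping rather than anything conceptual.
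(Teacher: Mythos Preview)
Your proposal is correct, and your ``alternative'' structural argument is essentially the route the paper takes. The paper's execution is, however, markedly leaner than either of your two plans: it uses linearity only in the \emph{middle} variable (the one appearing inside $\ad$), keeping the other two arguments completely general. Concretely, the paper does exactly two computations: first
\[
\langle [(A,u,v,w),(C,0,0,0)],(B,x,y,z)\rangle = \langle (A,u,v,w),[(C,0,0,0),(B,x,y,z)]\rangle
\]
for arbitrary $C\in\so(8)$, and then the same identity with the middle element replaced by $(0,e,0,0)$. Transitivity of $\Spin(8)$ on the unit sphere in the first octonionic factor, together with $\tau$-invariance of both the bracket and the form, then finishes the proof. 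So your slot-by-slot case analysis (a)--(d), the appeal to the $\so(9)$ subalgebra, and the worry about pairing wedge products under the trace are all avoidable: a single short calculation with $(0,e,0,0)$ against generic elements replaces all of case~(c), case~(d), and the bookkeeping you flagged as the main obstacle. Incidentally, your case~(c) with $\xi\in\so(8)$ and $\eta,\zeta$ in distinct octonionic slots is actually trivial (the relevant inner products vanish because the form is diagonal across the four summands), so no triality identity is needed there.
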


\begin{proof}
Let $C \in \so(8)$. We compute
\begin{align*}
&\langle [(A,u,v,w),(C,0,0,0)],(B,x,y,z) \rangle = \\
&=\langle ([A,C],-Cu,-\lambda(C)v,-\lambda^2(C)w),(B,x,y,z) \rangle = \\
&=8(-(Cu)^t x-(\lambda(C)v)^t y-(\lambda^2(C)w)^t z)-\tr(ACB-CAB)=\\
&=8(u^tCx+v^t\lambda(C)y+w^t\lambda^2(C)z)-\tr(ACB-ABC)=\\
&=\langle (A,u,v,w),([C,B],Cx,\lambda(C)y,\lambda^2(C)z) \rangle =\\
&=\langle (A,u,v,w),[(C,0,0,0),(B,x,y,z)] \rangle.
\end{align*}
Furthermore, we have
\begin{align*}
&\langle [(A,u,v,w),( 0,e,0,0)],(B,x,y,z) \rangle = \\
&=\langle (-4u \wedge e,Ae,\bar w, - \bar v),(B,x,y,z) \rangle = \\
&=8((Ae)^tx + \bar w^t y -\bar v^t z)+4\tr((u \wedge e) B)=\\
&=8(-u(Be)^t-v^t \bar z + w^t \bar y)+4\tr(A(e \wedge x))=\\
&=\langle (A,u,v,w),(-4 e \wedge x, -Be, -\bar z, \bar y) \rangle =\\
&=\langle (A,u,v,w),[(0,e,0,0),(B,x,y,z)] \rangle,
\end{align*}
where we have used $\tr(A(e \wedge x)) = \tr(Aex^t-Axe^t) = 2\tr(Aex^t) = 2(Ae)^tx$. Since $\Spin(8)$ acts transitively on the unit sphere in $\{0\} \times \Ca \times \{0\} \times\{0\}$ and $\tau$ is an automorphism of~$\mathcal A$ which leaves the scalar product invariant, the above calculations suffice by linearity to prove $\ad$-invariance.
\end{proof}

\begin{proof}[Proof of Theorem~\ref{thm:f4jacobi}]
Observe that the real Lie algebra $\left({\mathcal A},[{\cdot},{\cdot}]\right)$ is simple. Indeed, its adjoint representation restricted to the subalgebra~$\so(8)$ acts irreducibly on each of the direct summands in $\so(8) \times \Ca \times \Ca \times \Ca$ and such that the four representation modules are mutually inequivalent. Therefore, any non-trivial ideal of~$\mathcal A$ is a sum of one or more of these modules. However, it can be easily seen from the explicit formula of the bracket operation in the statement of the theorem that the only such sum which is an ideal is $\mathcal A$ itself.

Since $\mathcal A$ is 52-dimensional, it follows from the classification of simple real Lie algebras that $\mathcal A$ is isomorphic either to the compact real form $\g{f}_{4(-52)}$ or to one of the non-compact real forms $\g{f}_{4(4)}$, $\g{f}_{4(-20)}$ of the complex simple Lie algebra of type~$\rm F_4$.
However, the scalar product on~$\mathcal A$ as defined above is positive definite and $\ad$-invariant, thus it follows that $\mathcal A$ is isomorphic to the compact form.
\end{proof}

Define $\g{k} := \so(8) \times \Ca \times \{0\} \times \{0\}$ and $\g{p} := \{0\} \times \{0\} \times \Ca \times \Ca$. We will write~$\g{g}$ instead of~$\mathcal A$ from now on. We have the decomposition $\g{g} = \g{k} \oplus \g{p}$. Since there is only one conjugacy class of a subgroup locally isomorphic to~$\Spin(9)$ in the compact Lie group~$\rm F_4$, see~\cite{dynkin1}, this decomposition corresponds to the Riemannian symmetric space $\Ca \PP^2 = {\rm F}_4 / \Spin(9)$, the compact Cayley projective plane.


\section{\texorpdfstring{The isometry group of~$\Ca \HH^2$ and its root space decomposition}{The isometry group of OH2 and its root space decomposition}}\label{sec:isom}


Using the duality of symmetric spaces~\cite[Ch.~V, \S2]{helgason}, we may define the Lie algebra~$\g{g}^*$ by $\g{g}^* = \g{k} \oplus \sqrt{-1}\,\g{p}$ as a subalgebra of the complexification $\g{g} \otimes \C$.
Since the Lie algebra structure on~$\g{g}^*$ differs from the one on~$\g{g}$ only by changing the sign of the bracket on~$\g{p} \times \g{p}$, we obtain a model for~$\g{g}^*$ which is very similar to the model for~$\g{g}$ we constructed in Section~\ref{sec:f4cpt}. Using the identifications $\g{p}^* = \{0\} \times \{0\} \times \Ca \times \Ca$ and $\g{g}^* = \so(8) \times \Ca \times \Ca \times \Ca$  we may define a new bracket on~$\g{g}^*$ by setting
\begin{align}\label{eq:ncbracket}
[(A,u,v,w),(B,x,y,z)] = (C^*,r^*,s,t)
\end{align}
where
\begin{align*}
C^* &= AB-BA-4u \wedge x + 4\lambda^2 (v \wedge y) + 4\lambda (w \wedge z),\\
r^* &= Ax-Bu-\overline{vz}+\overline{yw},\\
s &= \lambda(A)y-\lambda(B)v+\overline{wx}-\overline{zu},\\
t &= \lambda^2(A)z-\lambda^2(B)w+\overline{uy}-\overline{xv},
\end{align*}
Let $\G$ be the simply connected Lie group whose Lie algebra is~$\g{g}^*$. Let $K$ be the connected subgroup of~$\G$ whose Lie algebra is~$\g{k}$. It is well known that $\G$ is the full isometry group of~$\M$ and $K$ is the stabilizer of a point~$o$.

Let $\vartheta \colon \g{g}^* \to \g{g}^*$ be the Cartan involution corresponding to the Cartan decomposition $\g{g}^* = \g{k} + \g{p}^*$, i.e.\ the linear map defined by $\vartheta(X+Y) = X-Y$ for $X \in \g{k}$ and $Y \in \g{p}^*$. Using the identification $\g{p} = \g{p}^*$, we can use $\langle \cdot, \cdot \rangle$ also as a scalar product on~$\g{g}^*$. The Killing form of~$\g{g}^*$ is then a multiple of~$\langle \cdot, \vartheta(\cdot) \rangle$ with a negative scaling factor.

Let $\g{a}$ be the one-dimensional subalgebra of~$\g{g}^*$ spanned by~$(0,0,e,0) \in \g{p}^*$.
In order to determine the restricted root space decomposition of~$\g{g}^*$ with respect to~$\g{a}$, we compute the following bracket, assuming $y \in \Pu(\Ca)$:
\begin{align*}
[(0,0,e,0),(B,x,y,z)] &= (4\lambda^2(e \wedge y), -\bar z , -\lambda(B)e,-\bar x)=\\
&= (2R_y, -\bar z , -\lambda(B)e,-\bar x),
\end{align*}
where we have used $4\lambda^2(e \wedge y)=-4\lambda^2(y \wedge e)=-2R_{e} \circ R_{\bar y}=2R_y$. Define
\[
\g{g}_{\pm\alpha} = \{ (0,\mp \bar x,0,x)\in \g{g}^* \colon x \in \Ca \}.
\]
Furthermore, set
\begin{align*}
\g{g}_{\pm 2 \alpha} &= \{ (\pm 2\lambda^2(e \wedge p),0,p,0)\in \g{g}^* \colon p \in \Pu(\Ca) \}
=\\
&= \{ (\pm R_p,0,p,0)\in \g{g}^* \colon p \in \Pu(\Ca) \}
.
\end{align*}

Let $\so(7)$ be the subalgebra of~$\so(8)$ spanned by the elements $a \wedge b$ where $a,b \in \Pu(\Ca)$.
Define
\[
\g{k}_0 = \{ (\lambda^2(X),0,0,0) \colon X \in \so(7) \}.
\]
and set $\g{g}_0 = \g{k}_0 + \g{a}$.
Obviously, $\g{g}_0$ commutes with $\g{a}$.
Then $\g{g} = \g{g}_{-2\alpha} + \g{g}_{-\alpha} + \g{g}_0 + \g{g}_{\alpha} + \g{g}_{2\alpha}$ is the restricted root space decomposition of $\g{g}^*$ with respect to the maximal abelian subspace~$\g{a}$ of~$\g{p}^*$ and we have
\[
[H,\xi] = \beta(H) \xi
\]
for all $H \in \g{a}$ and $\xi \in \g{g}_{\beta}$, where $\alpha$ is the linear form on $\g{a}$ defined by $\alpha(0,0,e,0)=1$. Indeed, it follows immediately from the above calculations that
\begin{align*}
  [(0,0,e,0),(0,-\bar x,0,x)] &= (0, -\bar x,0,x), \\
  [(0,0,e,0),(0,\bar x,0,x)] &= -(0, \bar x,0,x)
\end{align*}
for $x \in \Ca$ and we compute, for $p \in \Pu(\Ca)$,
\begin{align*}
[(0,0,e,0),(\pm 2\lambda^2(e \wedge p),0,p,0)]
&= (4\lambda^2(e \wedge p),0,\mp2(e \wedge p)e,0) = \\
&=(4\lambda^2(e \wedge p),0,\mp2(ep^te-pe^te),0) =\\
&=\pm2(\pm2\lambda^2(e \wedge p),0,p,0);
\end{align*}
furthermore we have, for $X \in \so(7)$,
\[
[(0,0,e,0),(\lambda^2(X),0,0,0)]=(0,0,-\lambda(\lambda^2(X))(e),0) = (0,0,-X(e),0) = 0.
\]

It follows from $[\g{g}_\alpha, \g{g}_\beta] \subseteq \g{g}_{\alpha+\beta}$ that $\g{n} = \g{g}_\alpha + \g{g}_{2\alpha}$ is a nilpotent subalgebra of~$\g{g}^*$ and $\g{a} + \g{n}$ is a solvable Lie algebra of~$\g{g}^*$. Moreover,
\[
\g{g}^* = \g{k} + \g{a} + \g{n}
\]
is an Iwasawa decomposition and $\g{k}_0 + \g{a} + \g{n}$ is a maximal parabolic subalgebra of~$\g{g}^*$. It is well known that the closed subgroup~$AN$ of $F_4^*$ corresponding to $\g{a} + \g{n}$ acts simply transitively on $\M$, see e.g.~\cite{BTV95}.

Let us write down the bracket on $\g{k}_0 + \g{a} + \g{n}$ in an explicit form. We use the notation
\begin{align*}
\g{v}&:=\g{g}_{\alpha\ }=\{ (0,- \bar x,0,x)\in \g{g}^* \colon x \in \Ca \},\\
\g{z} &:= \g{g}_{2\alpha} = \{ (R_{p},0,p,0)\in \g{g}^* \colon p \in \Pu(\Ca) \}.
\end{align*}
Furthermore, identify $\g{a}$ with~$\R$ via the linear map that sends $(0,0,e,0)$ to~$1$.
Now we have the identification
\[
\g{k}_0 \times \g{a} \times \g{v} \times \g{z} = \so(7) \times \R \times \Ca \times \Pu(\Ca)
\]
and we may use the imbedding
\begin{align*}
\iota \colon \g{k}_0 \times \g{a} \times \g{v} \times \g{z} &\to \g{g}^*, \\ (A,s,x,p)&\mapsto(\lambda^2(A)+R_p,-\bar x,s+p,x).
\end{align*}
Then the restriction of the bracket on~$\g{g}^*$ to~$\g{k}_0 \times \g{a} \times \g{v} \times \g{z}$ is given by
\begin{equation}\label{eq:anbracket}
[\iota(A,s,x,p),\iota(B,t,y,q)] = \iota(AB-BA,0,z,r)
\end{equation}
where
\begin{align*}
z &= \lambda(A)y-\lambda(B)x+sy-tx,\\
r &= Aq-Bp+2sq-2tp+x\bar y -y \bar x.
\end{align*}


\section{Totally geodesic subspaces}\label{sec:totgeod}


The congruence classes of totally geodesic submanifolds of the Cayley hyperbolic plane are well known, see Proposition~\ref{prop:totgeod}. In this section will describe the corresponding Lie triple systems and the subalgebras generated by them in the framework of our model for the Lie algebra~$\g{g}^*$.

\begin{proposition}\label{prop:totgeod}
Every totally geodesic subspace of positive dimension in~$\M$ is congruent to one of $\R \HH^2,$ $\C \HH^2,$ $\H \HH^2$, $\M$, or $\HH^m$ for $1 \le m \le 8$, where we denote by $\R \HH^2$ and $\HH^2$ two non-congruent types of totally geodesic hyperbolic planes corresponding to the totally geodesic subspaces $\R \PP^2$ and $\eS^2$, respectively, in the compact dual $\Ca \PP^2$.
In particular, the maximal totally geodesic subspaces of~$\M$ are, up to congruence, $\H \HH^2$ and $\HH^8$.
\end{proposition}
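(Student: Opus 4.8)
The plan is to pass to the compact dual $\Ca\PP^2 = \mathrm{F}_4/\Spin(9)$, where the classification of totally geodesic submanifolds is classical and due to Wolf and Chen--Nagano, and then transfer the result to $\M$ via the duality of symmetric spaces. Concretely, totally geodesic subspaces of a symmetric space $G/K$ through a fixed point correspond bijectively to Lie triple systems in the tangent space $\g{p}$ (or $\g{p}^*$), and this correspondence is compatible with duality: a subspace $\g{m} \subseteq \g{p}$ is a Lie triple system with respect to the bracket of~$\g{g}$ if and only if the corresponding subspace $\sqrt{-1}\,\g{m} \subseteq \g{p}^*$ is a Lie triple system with respect to the bracket of~$\g{g}^*$, since the two triple brackets differ only by an overall sign on $[\g{p},\g{p},\g{p}]$. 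Hence the congruence classes of totally geodesic subspaces of $\M$ are in one-to-one correspondence with those of $\Ca\PP^2$, and it remains only to identify the latter.

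First I would recall the known list for $\Ca\PP^2$: up to congruence, the totally geodesic submanifolds of positive dimension are $\R\HH^2$ dual to $\R\PP^2$, the complex and quaternionic subplanes $\C\PP^2$ and $\H\PP^2$, the full space itself, and the totally geodesic spheres $\eS^m$ for $1 \le m \le 8$, where $\eS^8$ appears as the "other" $8$-dimensional totally geodesic subspace and the two families of $2$-spheres (one coming from $\R\PP^2$, one from $\eS^2$) are genuinely non-congruent. (This is in Wolf's book and in Chen--Nagano; for rank-one spaces the classification is particularly clean since one may use the root-space description.) Dualizing term by term gives exactly the list $\R\HH^2$, $\C\HH^2$, $\H\HH^2$, $\M$, and $\HH^m$ for $1\le m\le 8$, with $\R\HH^2$ and $\HH^2$ non-congruent for the same reason that $\R\PP^2$ and $\eS^2$ are. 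For the last sentence, one checks from dimensions and containments that the only maximal members of the list are $\H\HH^2$ (dimension $8$) and $\HH^8$ (dimension $8$): every $\HH^m$ with $m\le 7$ sits inside $\HH^8$, and $\R\HH^2 \subset \C\HH^2 \subset \H\HH^2$, while $\HH^8$ is not contained in $\H\HH^2$ (a real hyperbolic $8$-space has no complex structure, and $\H\HH^2$ has real dimension $8$ as well, so neither contains the other properly), so both must be listed as maximal.

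Alternatively, and perhaps more in the spirit of the paper's explicit model, one could verify the list directly in $\g{g}^*$: using the root-space decomposition $\g{g}^* = \g{g}_{-2\alpha}+\g{g}_{-\alpha}+\g{g}_0+\g{g}_{\alpha}+\g{g}_{2\alpha}$, every totally geodesic subspace is congruent to one whose tangent Lie triple system is adapted to $\g{a}$; a flat-of-rank-one argument then shows each such Lie triple system has a controlled form, and the candidates can be matched against the explicit list by computing which subspaces of $\g{p}^*$ are closed under the double bracket. I expect the main obstacle in either route to be bookkeeping rather than conceptual: in the duality route one must be careful that the correspondence of congruence classes (not merely of subalgebras) is faithful, i.e.\ that two Lie triple systems in $\g{p}$ are $K$-conjugate iff their duals are $K$-conjugate, which follows because $K$ is the same compact group in both cases and acts by the same isotropy representation on $\g{p} = \g{p}^*$; in the direct route the obstacle is enumerating Lie triple systems adapted to $\g{a}$ without omitting cases. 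Since the paper explicitly says "well known" and cites the classification, the cleanest exposition is the duality argument, with the non-congruence of $\R\HH^2$ and $\HH^2$ and the maximality claim spelled out by a short dimension-and-inclusion count.
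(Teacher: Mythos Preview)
Your proposal is correct and follows the same route the paper takes: the paper's proof is simply a citation to Chen~\cite{Ch73} and Wolf~\cite{Wo63}, i.e.\ it treats the result as known, and your sketch spells out exactly the standard duality argument behind those references. The only minor slip is a typo in your compact-side list (you wrote ``$\R\HH^2$ dual to $\R\PP^2$'' when listing the submanifolds of $\Ca\PP^2$; you meant $\R\PP^2$ there), but the argument itself is sound.
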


\begin{proof}
See \cite[Proposition~3.1]{Ch73} or~\cite{Wo63}.
\end{proof}

In the following, we will write down some standardly embedded representatives of each congruence class of totally geodesic subspaces of~$\M$.  We will use the identification $\g{p}^* = \Ca \times \Ca$ given by $(0,0,x,y) \mapsto (x,y)$ as a convenient shorthand notation for elements in~$\g{p}^*$.

\begin{proposition}\label{prop:realhyp}
Let $V \subseteq \R^8$ be a non-zero linear subspace. Then $V \times \{0\}$ and $\{0\} \times V$ are both Lie triple systems in~$\g{p}^*$ whose exponential images are both congruent to $\HH^d$, where $d=\dim(V)$.
\end{proposition}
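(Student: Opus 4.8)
The plan is to show that $V \times \{0\}$ and $\{0\} \times V$ are Lie triple systems by a direct computation with the bracket~(\ref{eq:ncbracket}), and then to identify the associated totally geodesic submanifolds with real hyperbolic spaces by exhibiting the sectional curvature on them or, more cleanly, by recognizing the subalgebra they generate. First I would verify the Lie triple system property for $V \times \{0\} = \{(0,0,v,0) \colon v \in V\}$. Using~(\ref{eq:ncbracket}) with $u=w=x=z=0$ and $A=B=0$, the bracket of two such elements $(0,0,v,0)$ and $(0,0,v',0)$ is $(4\lambda^2(v \wedge v'),0,0,0) \in \so(8) \times \{0\} \times \{0\} \times \{0\} = \g{k}$; call this element $(C,0,0,0)$. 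Bracketing once more with a third element $(0,0,v'',0)$ of $V \times \{0\}$ gives, again reading off~(\ref{eq:ncbracket}), the element $(0,0,\lambda(C)v'',0)$. So I must check that $\lambda(C)v'' = \lambda(4\lambda^2(v \wedge v'))v'' = 4\lambda^3(v \wedge v')v'' = 4(v \wedge v')v'' \in V$, using $\lambda^3 = 1$. But $(v \wedge v')v'' = v(v'^t v'') - v'(v^t v'')$ is an $\R$-linear combination of $v$ and $v'$, hence lies in $V$. This proves $[[V\times\{0\},V\times\{0\}],V\times\{0\}] \subseteq V \times \{0\}$.

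For $\{0\} \times V = \{(0,0,0,v) \colon v \in V\}$ the computation is parallel: the bracket of $(0,0,0,v)$ and $(0,0,0,v')$ is $(4\lambda(v \wedge v'),0,0,0) = (C',0,0,0) \in \g{k}$ by~(\ref{eq:ncbracket}), and bracketing with $(0,0,0,v'')$ yields $(0,0,0,\lambda^2(C')v'') = (0,0,0,4\lambda^3(v\wedge v')v'',0)=(0,0,0,4(v\wedge v')v'')$, which again lies in $\{0\} \times V$ by the same linear-algebra observation. This establishes that both subspaces are Lie triple systems in~$\g{p}^*$. (One could also note that $\{0\}\times V$ is the image of $V \times \{0\}$ under the automorphism~$\tau^2$ of~$\g{g}^*$, reducing the second computation to the first, but the direct check is short enough.)

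It remains to identify the exponential image with $\HH^d$, $d = \dim V$. Here I would argue as follows. A totally geodesic submanifold of a rank-one symmetric space of noncompact type through the base point is determined by its Lie triple system, and its isometry type is determined by the induced metric, which in turn is controlled by the sectional curvatures of $2$-planes inside the Lie triple system; these equal $-\langle [[\xi,\eta],\eta],\xi\rangle / (\langle\xi,\xi\rangle\langle\eta,\eta\rangle - \langle\xi,\eta\rangle^2)$ up to normalization. Taking $\xi = (0,0,v,0)$, $\eta = (0,0,v',0)$ with $v,v'$ orthonormal in $V$, the computation above gives $[[\xi,\eta],\eta] = (0,0,4(v\wedge v')v',0) = (0,0,4v,0) = 4\xi$ since $(v\wedge v')v' = v\langle v',v'\rangle - v'\langle v,v'\rangle = v$. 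Thus every $2$-plane in $V \times \{0\}$ has the same (constant negative) sectional curvature, so the totally geodesic submanifold is a real hyperbolic space; its dimension is $\dim V = d$, giving $\HH^d$. The same constant-curvature computation works verbatim for $\{0\} \times V$ using $\lambda$ in place of $\lambda^2$.

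The main obstacle is not the Lie-triple-system verification, which is a two-line bracket computation, but rather making the identification ``constant curvature $\Rightarrow$ real hyperbolic space'' rigorous and correctly normalized — in particular confirming that the induced metric on the exponential image is (a rescaling of) the standard one and that no identifications occur, so that the dimension is exactly $d$ and not less. I expect this to follow cleanly from the general theory of totally geodesic submanifolds of symmetric spaces (a Lie triple system on which the curvature tensor restricts to that of a space form exponentiates to a space form of the same dimension), together with Proposition~\ref{prop:totgeod}, which already lists $\HH^d$ for $1 \le d \le 8$ as the only constant-negative-curvature totally geodesic subspaces of~$\M$; matching dimensions then forces the claimed conclusion.
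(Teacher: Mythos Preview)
Your bracket computations verifying the Lie triple system property are correct and give a more hands-on route than the paper's. The paper instead works in the compact form~$\g{g}$, where $\tau$ \emph{is} an automorphism: applying $\tau$ (resp.\ $\tau^2$) to the subalgebra $\so(8)\times\Ca\times\{0\}\times\{0\}\cong\so(9)$ of Lemma~\ref{lm:sonine} shows that $\R^8\times\{0\}$ and $\{0\}\times\R^8$ in~$\g{p}$ each generate a copy of~$\so(9)$, hence exponentiate to an~$\eS^8$ in~$\Ca\PP^2$; linear subspaces then give sub-spheres, and duality yields~$\HH^d$. That argument avoids any curvature computation.

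Two corrections to your write-up. First, your parenthetical that $\tau^2$ is an automorphism of~$\g{g}^*$ is false: compare the signs of the $\lambda$-terms in $C^*$ of~(\ref{eq:ncbracket}) with those of~$C$ in~(\ref{eq:cbracket}); $\tau$ permutes the three octonionic slots, but the non-compact bracket treats the first slot with the opposite sign. This does not damage your argument since you carry out the second computation directly anyway (and one could rescue the remark by passing through the compact form, where $\tau$ \emph{is} an automorphism and the Lie triple system condition is the same up to a global sign on the triple bracket).

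Second, and this is a genuine gap, your identification step fails for $d=2$: it is not true that the $\HH^d$ are ``the only constant-negative-curvature totally geodesic subspaces of~$\M$''. Proposition~\ref{prop:totgeod} also lists $\R\HH^2$, which has constant curvature equal to one quarter that of~$\HH^2$ (see Remark~\ref{rem:hplanes}). So ``constant curvature plus matching dimension'' does not by itself distinguish $\HH^2$ from~$\R\HH^2$. The simplest fix within your approach is to use the inclusion $V\times\{0\}\subset\R^8\times\{0\}$: your own computation already shows the $d=8$ case has constant curvature, and by Proposition~\ref{prop:totgeod} the only $8$-dimensional possibility is~$\HH^8$; every $V\times\{0\}$ then exponentiates to a totally geodesic submanifold of this~$\HH^8$, hence to~$\HH^d$. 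Alternatively, compare the curvature value you computed with the two values in Remark~\ref{rem:hplanes}.
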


\begin{proof}
Consider the compact form $\g{g}$.  Applying the automorphism $\tau$ and using Lemma~\ref{lm:sonine}, it follows that the subspaces $\R^8 \times \{0\}$ and $\{0\} \times \R^8$ of~$\g{p}$ are both Lie triple systems generating a Lie algebra isomorphic to~$\so(9)$. It follows that they are Lie triple systems tangent to an $8$-sphere in~$\Ca\PP^2$.
Hence any of their $d$-dimensional linear subspaces is a Lie triple system tangent to a $d$-dimensional sphere in~$\Ca\PP^2$. Now the statement of the proposition follows by duality.
\end{proof}

The next proposition describes a Lie triple system tangent to a totally geodesic~$\H \HH^2$ in~$\M$ and a subalgebra of~$\g{g}^*$ isomorphic to~$\g{sp}(1,2)$, which is generated by this Lie triple system. To formulate it precisely, we need to make the following definitions.
Define $\H$ to be the subalgebra of~$\Ca$ spanned by $e_0=e,e_1,e_2,e_3$ and define the subalgebra
\begin{equation}\label{eq:so4def}
\so(4) :=
\left\{ \left(
\begin{array}{c|c}
  A &  \\ \hline
   & 0 \\
\end{array} \right)
\colon
A \in \R^{4 \times 4}, A^t = - A
\right\} \subset \so(8).
\end{equation}
Let $U = \{0\} \times \H \times \{0\} \times \{0\}$.
Then it follows from Lemma~\ref{lm:sonine} that $\so(4)+U$ is a subalgebra of~$\so(8)$ isomorphic to~$\so(5) \cong \mysp(2)$ and we have $[U,U]=\so(4)$.
Now consider the subspaces $\tau(U) = \{0\} \times \{0\} \times \H \times \{0\}$ and $\tau^2(U) = \{0\} \times \{0\} \times \{0\} \times \H$ of the compact Lie algebra~$\g{f}_4$. We have $[\tau(U),\tau(U)]=\lambda^2(\so(4))$ and $[\tau^2(U),\tau^2(U)]=\lambda(\so(4))$.
We define
\[
\mysp(1)^3 := \so(4) + \lambda(\so(4)) + \lambda^2(\so(4)).
\]
Let us show that this subspace is actually a subalgebra of~$\so(8)$: note that the representations $\lambda|_{\so(4)}$ and $\lambda^2|_{\so(4)}$ leave the subspaces $\H$ and $\H e_4$ of~$\Ca$ invariant and act nontrivially on both of them. Let $\varrho \colon \so(4) \to \GL(\H e_4)$ be the representation defined by $\varrho(A)(x) = \lambda(A)(x)$ and let $\varphi \colon \so(4) \to \GL(\H e_4)$ be the representation defined by $\varphi(A)(x) = \lambda^2(A)(x)$.
Since we have  $L_a|_{\H e_4} = -R_a|_{\H e_4}$ for $a \in \Pu(\H)$, it follows that the images of $\varrho$ and $\varphi$ are the same. Therefore, we have
\begin{equation}\label{eq:sp13descr}
\g{sp}(1)^3 =
\left\{ \left(
\begin{array}{c|c}
  A &  \\ \hline
   & \varrho(B) \\
\end{array} \right) \in \so(8)
\colon
A \in \R^{4 \times 4}, \; B \in \su(2)
\right\},
\end{equation}
where we have written $\su(2)$ for the simple ideal of~$\so(4)$ which does not lie in the kernel of~$\varrho$. It is now obvious that $\g{sp}(1)^3$ is a subalgebra of~$\so(8)$ isomorphic to $\mysp(1) \oplus \mysp(1) \oplus \mysp(1)$. Moreover, by definition, we have $\lambda(\g{sp}(1)^3) = \g{sp}(1)^3$.

\begin{proposition}\label{prop:quathyp}
The subset $\g{sp}(1)^3 \times \H \times \H \times \H$ is a subalgebra of~$\g{f}_4^*$ isomorphic to~$\g{sp}(1,2)$.
It is generated by the set $\H\times\H$, which is a Lie triple system in~$\g{p}^*$ whose exponential image is congruent to~$\H \HH^2$.
\end{proposition}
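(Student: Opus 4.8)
The plan is to verify directly from the explicit bracket formula~(\ref{eq:ncbracket}) that $\g{h} := \g{sp}(1)^3 \times \H \times \H \times \H$ is closed under the bracket, identify it abstractly, and then identify the Lie triple system and its geometric image. For closure, I would split the check according to which factors the two arguments come from. The bracket of two elements of $\g{sp}(1)^3$ stays in $\g{sp}(1)^3$ because $\g{sp}(1)^3$ is a subalgebra of~$\so(8)$, already established in the text. For $[(A,0,0,0),(0,0,v,0)]=(0,0,\lambda(A)v,0)$ and the analogous brackets with the third and fourth slots, I need that $\lambda(\g{sp}(1)^3)=\g{sp}(1)^3$ (noted in the text) and that $\lambda$ and $\lambda^2$ map $\H$ into~$\H$; the latter follows since $\lambda|_{\so(4)}$ and $\lambda^2|_{\so(4)}$ leave~$\H$ invariant (as observed before~(\ref{eq:sp13descr})), while on $\g{sp}(1)^3$ the action on~$\H$ factors through~$\so(4)$ because, in the block description~(\ref{eq:sp13descr}), the second block acts on $\H e_4$, not on~$\H$. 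For the "matter--matter" brackets I must check that the three terms $\overline{vz},\overline{yw}$ (in~$r^*$), etc., and the curvature terms $\lambda^2(v\wedge y),\lambda(w\wedge z)$ land in~$\H$ and $\g{sp}(1)^3$ respectively: the first because $\H$ is a subalgebra of~$\Ca$ closed under conjugation, the second because $v\wedge y\in\so(4)$ for $v,y\in\H$ (by~(\ref{eq:so4def}) and Lemma~\ref{lm:sonine}-type reasoning, as $\H\subset\R^4$), hence $\lambda^2(v\wedge y)\in\lambda^2(\so(4))\subset\g{sp}(1)^3$, and similarly for $\lambda(w\wedge z)$.

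Once closure is established, $\g{h}$ is an $18$-dimensional subalgebra ($\dim\g{sp}(1)^3=9$, plus three copies of $\H$). To identify it as $\g{sp}(1,2)$, I would exhibit its Cartan-type decomposition: intersecting with $\g{k}=\so(8)\times\Ca\times\{0\}\times\{0\}$ gives the subalgebra $\g{sp}(1)^3\times\H\times\{0\}\times\{0\}$, which by the $\tau$-twisted version of Lemma~\ref{lm:sonine} (exactly as in the proof of Proposition~\ref{prop:quathyp}'s setup, where $\so(4)+U\cong\so(5)\cong\mysp(2)$ and the two extra $\mysp(1)$'s act on it) is isomorphic to $\mysp(1)\oplus\mysp(2)$, the maximal compact subalgebra of $\mysp(1,2)$; the complementary part $\{0\}\times\{0\}\times\H\times\H$ has the right dimension~$8$ and the right sign of the Killing form (the bracket on it closes back into $\g{k}\cap\g{h}$ with the noncompact sign coming from the $+4\lambda^2(v\wedge y)+4\lambda(w\wedge z)$ terms in~$C^*$). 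Since $\mysp(1,2)$ is the unique real form of its complexification with that maximal compact subalgebra and that signature, $\g{h}\cong\mysp(1,2)$; alternatively, one may simply invoke that $\g{h}$ is the dual, under the construction of Section~\ref{sec:isom}, of the subalgebra $\g{sp}(1)^3\times\H\times\H\times\H$ of the compact~$\g{f}_4$, which is a well-known $\mysp(2)\oplus\mysp(1)$-type... more precisely its compact partner is $\mysp(1)\oplus\mysp(3)$-style — so the cleanest route is to first identify the compact version and then dualize.

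For the Lie triple system claim: $\H\times\H=\{(0,0,x,y):x,y\in\H\}\subseteq\g{p}^*$ is a Lie triple system iff $[[\H\times\H,\H\times\H],\H\times\H]\subseteq\H\times\H$, and $[\H\times\H,\H\times\H]\subseteq\g{sp}(1)^3\times\{0\}\times\{0\}\times\{0\}$ by the closure computation above, while $[\g{sp}(1)^3,\{0\}\times\{0\}\times\H\times\H]\subseteq\{0\}\times\{0\}\times\H\times\H$ as already checked; so it is indeed a Lie triple system, and the subalgebra it generates is contained in~$\g{h}$, and contains $\g{sp}(1)^3$ (one checks $[\H\times\H,\H\times\H]$ spans all of $\g{sp}(1)^3$, using that $[U,U]=\so(4)$, $[\tau U,\tau U]=\lambda^2\so(4)$, $[\tau^2U,\tau^2U]=\lambda\so(4)$ combine to give the three summands), hence it generates all of~$\g{h}$. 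Finally, the exponential image is a totally geodesic submanifold of~$\M$ of real dimension~$8$ carrying an $\g{sp}(1,2)$-action transitive on its tangent space up to the isotropy $\mysp(1)\oplus\mysp(2)$ — this is precisely the symmetric space $\Sp(1,2)/\Sp(1)\Sp(2)=\H\HH^2$, so by Proposition~\ref{prop:totgeod} (which lists $\H\HH^2$ as the only totally geodesic subspace with these invariants) the image is congruent to~$\H\HH^2$.

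The main obstacle I anticipate is the bookkeeping in the closure verification — specifically, confirming that the action of the "extra" $\mysp(1)$ summand of $\g{sp}(1)^3$ (the block $\varrho(B)$ in~(\ref{eq:sp13descr}) acting on $\H e_4$) does not leak out of $\H$ when brackets are taken, i.e.\ that when restricted to the matter slots sitting in~$\H$ the relevant operators $\lambda(A),\lambda^2(A)$ for $A\in\g{sp}(1)^3$ genuinely preserve~$\H$. This requires carefully tracking how $\lambda$ and $\lambda^2$ interact with the octonion subalgebra $\H=\spann\{e_0,e_1,e_2,e_3\}$ versus its orthogonal complement $\H e_4$, using the explicit formulas $\lambda(a\wedge b)=\tfrac12 L_{\bar b}\circ L_{\bar a}$ and $\lambda^2(a\wedge b)=\tfrac12 R_{\bar b}\circ R_{\bar a}$ together with the multiplication table~(\ref{eq:omult}); once one notes that $\H$ is an associative subalgebra (the quaternions) and that left/right multiplications by elements of $\H$ preserve both $\H$ and $\H e_4$, this becomes routine, but it is the step most prone to error.
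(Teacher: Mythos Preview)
Your closure argument is essentially the paper's, and your anticipated obstacle (that $\lambda(A),\lambda^2(A)$ preserve~$\H$ for $A\in\g{sp}(1)^3$) is exactly the point the paper handles via the $\lambda$-invariance of~$\g{sp}(1)^3$ established just before~(\ref{eq:sp13descr}).

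There is one arithmetic slip: $\dim\g{h}=9+3\cdot4=21$, not~$18$. You implicitly get this right later (your $\g{h}\cap\g{k}$ has dimension~$13$ and the complement has dimension~$8$), but the misstated number matters because the identification with $\g{sp}(1,2)$ hinges on it.

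The main difference from the paper is in the identification step. Your route---exhibit the Cartan decomposition, recognise $\g{h}\cap\g{k}\cong\mysp(1)\oplus\mysp(2)$, argue via uniqueness of real forms, or alternatively dualise from the compact side---is workable but, as your own hedging suggests, more laborious than necessary and a bit muddled (the compact partner is $\g{sp}(3)$, not ``$\mysp(2)\oplus\mysp(1)$-type''). The paper bypasses all of this: once $\g{h}$ is a $21$-dimensional subalgebra containing the $8$-dimensional Lie triple system $\H\times\H\subset\g{p}^*$, Proposition~\ref{prop:totgeod} says the only $8$-dimensional totally geodesic subspace of~$\M$ with a $21$-dimensional isometry algebra is~$\H\HH^2$, which forces $\g{h}\cong\g{sp}(1,2)$ immediately. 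This is both shorter and avoids any real-form bookkeeping.

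One small gap in your generation argument: you show $[\H\times\H,\H\times\H]$ contains $\lambda(\so(4))+\lambda^2(\so(4))=\g{sp}(1)^3$, but to conclude the Lie triple system generates all of~$\g{h}$ you also need the first $\H$-slot $\{0\}\times\H\times\{0\}\times\{0\}$; this comes from the cross-brackets $[(0,0,v,0),(0,0,0,w)]=(0,-\overline{vw},0,0)$, which you should mention explicitly.
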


\begin{proof}
We first show that the subset $\g{sp}(1)^3 \times \H \times \H \times \H \subset \g{f}_4^*$ is closed under taking brackets. Obviously $\g{sp}(1)^3$ is a subalgebra of~$\g{f}_4^*$.
Since $\so(4) \subset \g{sp}(1)^3$, all brackets of elements in $\{0\}\times\H\times\{0\}\times\{0\}$ are contained in~$\g{sp}(1)^3$. Since $\g{sp}(1)^3$ is $\lambda$-invariant, it also contains all brackets of elements in $\{0\}\times\{0\}\times\H\times\{0\}$ and all brackets of elements in $\{0\}\times\{0\}\times\{0\}\times\H$.
Using~(\ref{eq:ncbracket}), it now follows easily that all brackets of elements in  $\{0\}\times\H\times\H\times\H$ are contained in~$\g{sp}(1)^3\times\H\times\H\times\H$.
Furthermore, it follows from the $\lambda$-invariance of~$\g{sp}(1)^3$ that $\ad_X$, $X \in \g{sp}(1)^3$ leaves the subspaces $\{0\}\times\H\times\{0\}\times\{0\}$, $\{0\}\times\{0\}\times\H\times\{0\}$, and $\{0\}\times\{0\}\times\{0\}\times\H$ invariant. This completes the proof of the subalgebra property.

It is now straightforward to see that $\{0\}\times\{0\}\times\H\times\H$ is a Lie triple system which generates the Lie algebra $\g{sp}(1)^3 \times \H \times \H \times \H$. Since this Lie algebra is 21-dimensional and the Lie algebra of the isometry group of an 8-dimensional totally geodesic subspace of~$\Ca\HH^2$, we know by Proposition~\ref{prop:totgeod} that it is isomorphic to~$\g{sp}(1,2)$ and the exponential image of $\{0\}\times\{0\}\times\H\times\H$  is congruent to $\H \HH^2$.
\end{proof}

From now on, we will use the notation
\[
\g{sp}(1,2) := \g{sp}(1)^3 \times \H \times \H \times \H.
\]

\begin{lemma}\label{lm:subalgtg}
Let $K$ be a subalgebra of~$\H$. Then $\ell := K \times K \subset \g{p}$ is a Lie triple system.
\end{lemma}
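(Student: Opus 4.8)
The plan is to verify directly, using the explicit bracket formula~(\ref{eq:ncbracket}), that $\ell = K \times K$ is closed under the double bracket $[[\cdot,\cdot],\cdot]$ when restricted to $\ell$. Recall that under the identification $\g{p}^* = \Ca \times \Ca$ an element of $\ell$ has the form $(0,0,v,w)$ with $v,w \in K$. First I would compute the bracket of two such elements: by~(\ref{eq:ncbracket}) we get
\[
[(0,0,v,w),(0,0,y,z)] = \bigl(4\lambda^2(v \wedge y) + 4\lambda(w \wedge z),\,0,\,0,\,0\bigr) \in \g{k},
\]
so the first bracket lands in $\so(8) \times \{0\} \times \{0\} \times \{0\}$, and the task reduces to showing that bracketing this element back with a third element $(0,0,p,q)$ of $\ell$ returns something in $\ell$. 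By~(\ref{eq:ncbracket}) again, writing $X = 4\lambda^2(v\wedge y) + 4\lambda(w\wedge z)$, we have
\[
[(X,0,0,0),(0,0,p,q)] = \bigl(0,\,0,\,\lambda(X)p,\,\lambda^2(X)q\bigr),
\]
so it suffices to show $\lambda(X)(K) \subseteq K$ and $\lambda^2(X)(K) \subseteq K$.

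The key observation is that $\lambda(\lambda^2(v\wedge y)) = v \wedge y$ and $\lambda^2(\lambda^2(v\wedge y)) = \lambda(v\wedge y)$, and similarly $\lambda(\lambda(w\wedge z)) = \lambda^2(w\wedge z)$ and $\lambda^2(\lambda(w\wedge z)) = w\wedge z$, since $\lambda^3 = 1$. Hence $\lambda(X) = 4(v\wedge y) + 4\lambda^2(w\wedge z)$ and $\lambda^2(X) = 4\lambda(v\wedge y) + 4(w\wedge z)$. So I must check three things: (a) $(v \wedge y)(K) \subseteq K$ for $v,y \in K$; (b) $\lambda(v\wedge y)(K)\subseteq K$ for $v,y\in K$; and (c) $\lambda^2(v\wedge y)(K)\subseteq K$ for $v,y\in K$. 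Point (a) is immediate: for $v,y,a \in K$ we have $(v\wedge y)(a) = v(y^t a) - y(v^t a) = \langle y,a\rangle v - \langle v,a\rangle y \in K$ since $K$ is a linear subspace. For (b) and (c) I would use that the subalgebras of $\H \subset \Ca$ are, up to conjugation, $\R e$, $\C = \R e + \R e_1$ (or another complex line), and $\H$ itself (and $\{0\}$, trivially); and that $\lambda$ and $\lambda^2$ act on $a\wedge b$ (for $a \in \Pu(\Ca)$) via $\frac12 L_{\bar b}\circ L_{\bar a}$ and $\frac12 R_{\bar b}\circ R_{\bar a}$ respectively. Since $K$ is a \emph{subalgebra} of $\H$, it is closed under left and right multiplication by its own elements, and conjugation $\bar a = 2\Re(a)e - a$ maps $K$ to $K$; thus $L_{\bar b}, L_{\bar a}, R_{\bar b}, R_{\bar a}$ all preserve $K$ when $a,b\in K$, giving (b) and (c) at least when the $\wedge$-factors are in $\Pu(\Ca)\cap K$ — and the general case $v,y\in K$ follows by splitting off real parts, since $v\wedge y = \Pu(v)\wedge\Pu(y) + (\text{terms involving }e)$ and $e \wedge a$ for $a\in K$ is handled via~(\ref{eq:Gij}) and the identity $\lambda^2(e\wedge p) = \frac12 R_p$, $\lambda(e \wedge p) = -\frac12 L_p$ on $\Pu(\Ca)$, both of which preserve $K$.

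The main obstacle is bookkeeping the reduction to $\Pu(\Ca)$: the automorphisms $\lambda,\lambda^2$ are only described on wedges $a\wedge b$ with $a \in \Pu(\Ca)$, so a general $v\wedge y$ with $v,y\in K$ must be rewritten as a combination of $p\wedge q$ with $p \in \Pu(\Ca)$ plus a multiple of $e\wedge r$ with $r\in K$, and one must confirm each piece is handled. A cleaner alternative, which I would adopt if the direct computation gets unwieldy, is to invoke Propositions~\ref{prop:realhyp} and~\ref{prop:quathyp}: when $\dim K = 1$, i.e.\ $K = \R e$, the claim is contained in Proposition~\ref{prop:realhyp} (the space $\R e \times \R e$ lies inside the Lie triple system $\R^8\times\{0\}$ after applying $\tau$, or one checks it is tangent to a geodesic $\HH^1$); when $K = \H$, the claim is exactly the statement in Proposition~\ref{prop:quathyp} that $\H\times\H$ is a Lie triple system; and when $K$ is a two-dimensional (necessarily complex) subalgebra, $K\times K$ sits inside $\H\times\H$ and, being the fixed-point set of an involution of the Lie triple system $\H\times\H$ that commutes with the triple bracket (reflection in a complex subalgebra of $\H$), is itself a Lie triple system. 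Either route works; I expect the paper takes the short argument via the previously established cases.
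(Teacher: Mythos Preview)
Your direct computation has a genuine gap at the very first step. You claim that
\[
[(0,0,v,w),(0,0,y,z)] = \bigl(4\lambda^2(v\wedge y) + 4\lambda(w\wedge z),\,0,\,0,\,0\bigr),
\]
but inspecting the $r^*$-component of~(\ref{eq:ncbracket}) (or the $r$-component of~(\ref{eq:cbracket})) shows that the second entry is $-\overline{vz}+\overline{yw}$, which is generally nonzero. In other words, the bracket of two elements of $\ell$ does \emph{not} land in $\so(8)\times\{0\}\times\{0\}\times\{0\}$: there is a contribution in $\{0\}\times\Ca\times\{0\}\times\{0\}$ coming from the cross-terms $[(0,0,v,0),(0,0,0,z)] = (0,-\overline{vz},0,0)$. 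Your subsequent analysis, which only tracks the $\so(8)$-piece, therefore omits the verification that
\[
\ad_{(0,-\overline{vz}+\overline{yw},0,0)}(0,0,p,q) \in \ell,
\]
and this is exactly half of the content of the lemma. (It does hold, since $K$ is closed under multiplication and conjugation, but it must be checked.) The paper's proof avoids this slip by breaking $[\ell,\ell]$ into the four elementary brackets $[(a,0),(b,0)]$, $[(a,0),(0,b)]$, $[(0,a),(b,0)]$, $[(0,a),(0,b)]$ and then applying $\ad_{(c,0)}$ and $\ad_{(0,c)}$ to each, so that the second-slot contributions are visible and handled.

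Your proposed alternative also does not quite work as stated: for $K=\R e$ the space $\R e\times\R e$ is two-dimensional and is not contained in any $V\times\{0\}$ or $\{0\}\times V$, so Proposition~\ref{prop:realhyp} does not cover it; nor can $\tau$ move it into $\R^8\times\{0\}$, since $\tau$ cyclically permutes the three $\Ca$-slots and cannot collapse two nonzero slots into one. The case $K=\H$ is indeed Proposition~\ref{prop:quathyp}, and the involution idea for $K\cong\C$ could be made to work, but the $K=\R e$ case still needs a direct check.
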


\begin{proof}
We compute some brackets:
\begin{align*}
[(a,0),(b,0)] &= (4\lambda^2(a \wedge b),0,0,0),\\
[(a,0),(0,b)] &= (0,-\overline{ab},0,0),\\
[(0,a),(b,0)] &= (0,\overline{ba},0,0),\\
[(0,a),(0,b)] &= (4\lambda(a \wedge b),0,0,0).
\end{align*}
The map $\ad_{(c,0)}$ sends these four elements to
\[
(0,0,-4(a \wedge b)c,0), \quad
(0,0,0,\bar c (ab)), \quad
(0,0,0,-\bar c (ba)), \quad
(0,0,-4\lambda^2(a \wedge b)c,0),
\]
while $\ad_{(0,c)}$ maps them to
\[
(0,0,0,-4\lambda(a \wedge b)c), \quad
(0,0,-(ab)\bar c,0), \quad
(0,0,(ba)\bar c,0), \quad
(0,0,0,-4(a \wedge b)c).
\]
We have $4\lambda(a \wedge b)c = 2L_{\bar b} \circ L_{\bar a} (c) = 2\bar b(\bar ac)$ and
$4\lambda^2(a \wedge b)c = 2R_{\bar b} \circ R_{\bar a} (c) = 2(c \bar a)\bar b$, where we have assumed $a \in \Pu(K)$. Furthermore, $(a \wedge b)c = ab^tc-ba^tc \in \spann_{\R}\{a,b\}$. We have shown that $[\ell,[\ell,\ell]]\subseteq \ell$.
\end{proof}

\begin{remark}\label{rem:hplanes}
To show that the Lie triple system given by Lemma~\ref{lm:subalgtg} for~$K = \R$ is not congruent to the Lie triple systems given by Proposition~\ref{prop:realhyp} for~$\dim(V)=2$, we consider their counterparts for the dual symmetric space~$\Ca \PP^2$.

Assume $\Ca \PP^2$ is endowed with the invariant metric induced by~$\langle{\cdot},{\cdot}\rangle$.
Let $X,Y \in \g{p}$ be a pair of orthonormal vectors.
Then the sectional curvature of the plane spanned by~$X$ and~$Y$ is given by
\[
K(X,Y) = \langle R(X,Y)X,Y \rangle
= \langle [[X,Y],X],Y \rangle
= \langle [X,Y],[X,Y] \rangle.
\]
For $X=(0,0,e,0)$, $Y=(0,0,0,e)$ we obtain
\[
K(X,Y) = \langle (0,e,0,0) ,(0,e,0,0) \rangle = 8.
\]
For $X=(0,0,e,0)$, $Z=(0,0,e_1,0)$ we obtain
\begin{align*}
K(X,Z) &= \langle (4\lambda^2(e_1 \wedge e),0,0,0) ,(4\lambda^2(e_1 \wedge e),0,0,0) \rangle = \\
&= -4 \tr (R_{\bar e} \circ R_{\bar e_1})^2 =-4 \tr (R_{\bar e_1})^2 =-4 \tr (-\id_{\R^8}) = 32.
\end{align*}
The two-dimensional Lie triple systems $\spann_{\R}\{X,Y\}$ and $\spann_{\R}\{X,Z\}$ either correspond to a totally geodesic~$\eS^2$ or to a totally geodesic~$\R\PP^2$ in~$\Ca \PP^2$. Since all geodesics in~$\Ca \PP^2$ are closed and of the same length, both spaces have the same diameter~$\delta$ and the totally geodesic~$\R\PP^2$ is covered by a locally isometric two-sphere of diameter~$2\delta$. This shows that the constant sectional curvature of a totally geodesic~$\eS^2$ in~$\Ca \PP^2$ is four times the constant sectional curvature of a totally geodesic $\R\PP^2$ in~$\Ca \PP^2$.

We have shown that the Lie triple system in~$\g{p}^*$ spanned by $(e,0)$, $(0,e)$ is tangent to a totally geodesic~$\R\HH^2$; the Lie triple system spanned by $(e,0)$, $(e_1,0)$ is tangent to a~$\HH^2$.

Moreover, these calculations show that the four-dimensional Lie triple system spanned by $(e,0)$, $(e_1,0)$, $(0,e)$, $(0,e_1)$ does not have constant sectional curvature, thus it is tangent to a~$\C \HH^2$. Therefore the Lie triple system given by Lemma~\ref{lm:subalgtg} for~$K = \C$ is not congruent to the Lie triple systems given by Proposition~\ref{prop:realhyp} for~$\dim(V)=4$. (This can also been seen by counting dimensions of the Lie algebras generated by the two Lie triple systems.)
\end{remark}

\begin{lemma}\label{lm:sumlietr}
Let $\ell \subseteq \g{p}^*$ be a Lie triple system such that $V_1 := \ell \cap (\Ca \times \{0\})$ and $V_2 := \ell \cap (\{0\} \times \Ca)$ are both non-zero. Then $\dim (V_1) = \dim(V_2)$ and $V_1+V_2 \subseteq \g{p}^*$ is a Lie triple system.
\end{lemma}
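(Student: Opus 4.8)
The plan rests on equipping $\g{g}^*$ with a $\mathbb{Z}_2$-grading adapted to the splitting $\g{p}^* = (\Ca\times\{0\})\oplus(\{0\}\times\Ca)$. I would introduce the linear map $\eta\colon\g{g}^*\to\g{g}^*$, $\eta(A,u,v,w)=(A,-u,v,-w)$, and check by inspecting the bracket formula~(\ref{eq:ncbracket}) summand by summand that $\eta$ is an involutive automorphism of $\g{g}^*$; equivalently, (\ref{eq:ncbracket}) endows $\g{g}^*$ with a $\mathbb{Z}_2$-grading $\g{g}^* = \g{g}^*_{\bar 0}\oplus\g{g}^*_{\bar 1}$ in which the $\so(8)$-slot and the third slot are even while the second and fourth slots are odd, so that $[\g{g}^*_{\bar i},\g{g}^*_{\bar j}]\subseteq\g{g}^*_{\overline{i+j}}$. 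In the shorthand $\g{p}^*=\Ca\times\Ca$ this reads $\Ca\times\{0\} = \g{p}^*\cap\g{g}^*_{\bar 0}$ and $\{0\}\times\Ca = \g{p}^*\cap\g{g}^*_{\bar 1}$; in particular $V_1\subseteq\g{g}^*_{\bar 0}$ and $V_2\subseteq\g{g}^*_{\bar 1}$.

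For the Lie triple system assertion, by multilinearity it suffices to show $[[X,Y],Z]\in V_1+V_2$ whenever each of $X,Y,Z$ lies in $V_1$ or in $V_2$. Such a triple bracket lies in $\g{g}^*_{\overline{i+j+k}}\cap\g{p}^*$, hence in $\Ca\times\{0\}$ or in $\{0\}\times\Ca$; on the other hand $X,Y,Z\in\ell$, so, $\ell$ being a Lie triple system, $[[X,Y],Z]\in\ell$. Therefore $[[X,Y],Z]$ lies in $\ell\cap(\Ca\times\{0\}) = V_1$ or in $\ell\cap(\{0\}\times\Ca) = V_2$, and $V_1+V_2$ is a Lie triple system.

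For $\dim V_1 = \dim V_2$, write $V_1 = V_1'\times\{0\}$ and $V_2 = \{0\}\times V_2'$ with $V_1',V_2'\subseteq\Ca$. Evaluating (\ref{eq:ncbracket}) in the slot coordinates gives, for $a_1,a_2\in V_1'$ and $b\in V_2'$,
\[
[[(a_1,0),(0,b)],(a_2,0)] = (0,-\bar a_2(a_1 b)),
\]
which, being a triple bracket of elements of $\ell$ lying in $\{0\}\times\Ca$, belongs to $V_2$; thus $\bar a_2(a_1 b)\in V_2'$. Fixing nonzero $a_1\in V_1'$ and nonzero $b\in V_2'$, the octonion $c:=a_1 b$ is nonzero since $\Ca$ is a division algebra, and $a_2\mapsto -\bar a_2\,c$ is the composition of octonionic conjugation with right multiplication by $c$, hence a linear isomorphism of $\Ca$; restricted to $V_1'$ it is an injection into $V_2'$, so $\dim V_1'\le\dim V_2'$. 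The symmetric computation $[[(0,b_1),(a,0)],(0,b_2)] = (-\overline{b_2\,\overline{a b_1}},0)\in V_1$ yields in the same way an injection $V_2'\hookrightarrow V_1'$, whence $\dim V_1 = \dim V_2$.

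The only genuinely delicate point is the dimension count: one must single out triple brackets that, by the grading, land in exactly one of the two $\Ca$-factors of $\g{p}^*$ and that depend linearly and invertibly on one of their three arguments. Once (\ref{eq:ncbracket}) is written out in slot coordinates — a short if somewhat fiddly octonionic calculation — both properties are immediate, the invertibility being nothing more than the facts that $\Ca$ is a division algebra and that conjugation is a linear involution. The Lie triple system assertion, by contrast, is a purely formal consequence of the $\mathbb{Z}_2$-grading together with the hypothesis that $\ell$ is a Lie triple system.
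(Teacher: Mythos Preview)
Your argument is correct. For the dimension count you and the paper do essentially the same thing: you both bracket an element of~$V_1$ with an element of~$V_2$ to produce $\xi=(0,-\overline{a_1b},0,0)\in[\ell,\ell]$ and then observe that $\ad_\xi$ restricts to injections $V_1\hookrightarrow V_2$ and $V_2\hookrightarrow V_1$; the paper packages both injections into the single map~$\ad_\xi$, while you write out the two triple brackets separately, but the content is identical.

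Where you differ is in the Lie triple system assertion. The paper argues case by case: it uses Proposition~\ref{prop:realhyp} to get $[[V_i,V_i],V_i]\subseteq V_i$, observes $[V_i,V_i]\subseteq\so(8)\times\{0\}^3$ to get $[[V_i,V_i],V_{3-i}]\subseteq V_{3-i}$, and treats the mixed case $[[V_1,V_2],V_k]$ via the maps~$\ad_\xi$ just constructed. Your route is more conceptual: you note that $\eta(A,u,v,w)=(A,-u,v,-w)$ is an involutive automorphism of~$\g{g}^*$ (a one-line check against~(\ref{eq:ncbracket})), giving a $\mathbb{Z}_2$-grading in which $\Ca\times\{0\}$ and $\{0\}\times\Ca$ are precisely the homogeneous pieces of~$\g{p}^*$, whence every triple bracket of homogeneous elements of~$\ell$ automatically lands in $\ell\cap(\Ca\times\{0\})$ or $\ell\cap(\{0\}\times\Ca)$. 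This handles all eight parity cases at once and avoids invoking Proposition~\ref{prop:realhyp}; the paper's version, on the other hand, stays closer to the explicit structure already developed and requires no new ingredient.
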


\begin{proof}
Let $W_1, W_2 \subseteq \Ca$ be such that $V_1 = W_1 \times \{0\}$ and $V_2 = \{0\} \times W_2$. Observe that $[\ell,\ell]$ contains the bracket $\xi := (0,-\overline{vw},0,0) = [(0,0,v,0),(0,0,0,w)]$ where $v \in W_1$, $w \in W_2$ are both non-zero. The linear map $\ad_\xi$ then induces isomorphisms $\Ca \times \{0\} \to \{0\} \times \Ca$ and $\{0\} \times \Ca \to \Ca \times \{0\}$ of real vector spaces; furthermore, viewed as a map $\g{p}^* \to \g{p}^*$, it preserves the subspace~$\ell$, since $\ell$ is a Lie triple system. It follows that $\ad_\xi$ maps~$V_1$ bijectively onto~$V_2$, hence $\dim (V_1) = \dim(V_2)$. This shows that all maps $\ad_\xi$ where $\xi = [(0,0,v,0),(0,0,0,w)]$, $v \in W_1$, $w \in W_2$, preserve $V_1+V_2$. Since, by Proposition~\ref{prop:realhyp}, $\Ca \times \{0\}$ and $\{0\} \times \Ca$ are Lie triple systems of~$\g{p}^*$, it follows that we also have $[[V_i,V_i],V_i] \subseteq V_i$ for $i=1,2$.
Furthermore, we have $[V_i,V_i] \subseteq \so(8)$ and hence $[[V_i,V_i],V_{3-i}] \subseteq V_{3-i}$.
Now the assertion of the lemma follows.
\end{proof}

\begin{remark}\label{rem:gtwoact}
In the proof of the next lemma we use the fact that ${\rm G}_2$ is a subgroup of the automorphism group of~$\g{g}^*$. In fact, let ${\rm G}_2$ be the set of automorphisms of~$\Ca$. Then ${\rm G}_2$ acts on~$\g{g}^*$ as follows: For $f \in {\rm G}_2$ and $(A,x,y,z) \in \so(8) \times \Ca^3$, define
\[
f \cdot (A,x,y,z) := (f \circ A \circ f^{-1},f(x),f(y),f(z)).
\]
Note that we have ${\rm G}_2 \subset \Spin(8)$ and the action defined above is given by the restriction of the $\Spin(8)$-action described in Lemma~\ref{lm:spin8aut}, see~\cite[\S3]{murakami}.
\end{remark}

\begin{lemma}\label{lm:totgeod}
Let $\ell \subseteq \g{p}^*$ be a Lie triple system which is of the form
$\ell = V_1 + V_2$, where $V_1 \subseteq \Ca \times \{0\}$ and $V_2 \subseteq \{0\} \times \Ca$ are non-zero linear subspaces. Then $\dim (V_1) = \dim(V_2)$ and the totally geodesic submanifold of~$\M$ corresponding to~$\ell$ is congruent to $\R\HH^2$, $\C \HH^2$, $\H \HH^2$, or~$\M$.
\end{lemma}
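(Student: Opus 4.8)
The plan is to bring $\ell$ into a normal form using the $\Spin(8)$-action, and then to show that the Lie-triple-system condition forces $\ell$ to be governed by a composition subalgebra of~$\Ca$.

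By Lemma~\ref{lm:sumlietr} we already know $d:=\dim(V_1)=\dim(V_2)$, and we write $V_1 = W_1\times\{0\}$ and $V_2 = \{0\}\times W_2$ with $W_1,W_2\subseteq\Ca$ (using the identification $\g{p}^*=\Ca\times\Ca$). The group $\Spin(8)\subset K$ acts on $\g{g}^*$ by automorphisms preserving the Cartan decomposition --- the bracket~(\ref{eq:ncbracket}) differs from~(\ref{eq:cbracket}) only by a sign on $\g{p}\times\g{p}$, which $\Spin(8)$ respects --- and on $\g{p}^*$ it acts by $\theta\cdot(v,w)=(\lambda(\theta)v,\lambda^2(\theta)w)$, in particular preserving the splitting $\g{p}^* = (\Ca\times\{0\})\oplus(\{0\}\times\Ca)$; hence it furnishes congruences of the totally geodesic subspaces of~$\M$. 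Since $\rho\circ\lambda$ and $\rho\circ\lambda^2$ are two inequivalent $8$-dimensional representations of~$\Spin(8)$, the group acts transitively on $\eS^7\times\eS^7$, so after replacing $\ell$ by a congruent Lie triple system we may assume $e\in W_1$ and $e\in W_2$.

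Next I would use $[[\ell,\ell],\ell]\subseteq\ell$. From~(\ref{eq:ncbracket}) one computes $[(0,0,p,0),(0,0,0,q)]=(0,-\overline{pq},0,0)$ and $[(0,a,0,0),(0,0,b,0)]=(0,0,0,\overline{ab})$. Taking $p=q=e$ gives $\xi:=(0,-e,0,0)\in[\ell,\ell]$, and $[\xi,(0,0,p',0)]=(0,0,0,-\overline{p'})\in\ell$ forces $\overline{W_1}\subseteq W_2$, hence $W_2=\overline{W_1}$ by equality of dimensions. Then for $p,p'\in W_1$ the element $(0,-\overline{p\,\overline{p'}},0,0)=(0,-p'\bar p,0,0)$ lies in $[V_1,V_2]\subseteq[\ell,\ell]$, so $[(0,-p'\bar p,0,0),(0,0,p'',0)]=(0,0,0,-\overline{(p'\bar p)p''})\in\ell$ yields $(p'\bar p)p''\in W_1$ for all $p,p',p''\in W_1$. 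Specializing $p=e$ gives $p'p''\in W_1$, so $W_1$ is a unital subalgebra of~$\Ca$; as each $p\in W_1$ satisfies $\bar p = 2\Re(p)e-p\in W_1$, it is closed under conjugation, whence $W_2=\overline{W_1}=W_1$. Since the only unital subalgebras of~$\Ca$ are, up to isomorphism, $\R$, $\C$, $\H$, and~$\Ca$, we get $d\in\{1,2,4,8\}$.

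Finally, since ${\rm G}_2=\Aut(\Ca)\subset\Spin(8)$ fixes~$e$, acts diagonally on the two factors of $\g{p}^*$ (Remark~\ref{rem:gtwoact}), and acts transitively on $2$-dimensional unital subalgebras and on quaternion subalgebras of~$\Ca$, we may further assume $W_1$ is $\R e$, the standard $\C=\spann_{\R}\{e_0,e_1\}$, $\H$, or all of~$\Ca$. Then $\ell=W_1\times W_1$ is one of the Lie triple systems already identified: $\R e\times\R e$ (tangent to $\R\HH^2$, Remark~\ref{rem:hplanes}), $\C\times\C$ (tangent to $\C\HH^2$, Remark~\ref{rem:hplanes}), $\H\times\H$ (tangent to $\H\HH^2$, Proposition~\ref{prop:quathyp}), or $\g{p}^*$ itself (tangent to~$\M$). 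The heart of the argument is the observation that $[[\ell,\ell],\ell]\subseteq\ell$ forces $W_1$ to be multiplicatively closed; the point requiring the most care is the very first one, namely that the $\Spin(8)$- and ${\rm G}_2$-actions from Lemma~\ref{lm:spin8aut} and Remark~\ref{rem:gtwoact} genuinely realize congruences of~$\M$ respecting the splitting of~$\g{p}^*$, and that triality transitivity on $\eS^7\times\eS^7$ permits normalizing the two factors of~$\ell$ simultaneously.
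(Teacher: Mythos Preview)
Your proof is correct and follows essentially the same strategy as the paper: normalize with the $\Spin(8)$-action so that $e\in W_1\cap W_2$, use $[[\ell,\ell],\ell]\subseteq\ell$ to show $W_1=W_2$ is a unital (hence composition) subalgebra of~$\Ca$, then bring it to standard form with~$\LG_2$ and invoke Proposition~\ref{prop:quathyp} and Remark~\ref{rem:hplanes}. The only difference is cosmetic: the paper computes brackets $[(0,0,v,0),(0,0,e,0)]\in\so(8)$ and applies $\ad$ of those to~$V_2$, whereas you use brackets $[(0,0,p,0),(0,0,0,q)]\in\{0\}\times\Ca\times\{0\}\times\{0\}$ and apply $\ad$ of those to~$V_1$; both routes yield the subalgebra property equally directly.
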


\begin{proof}
It follows from Lemma~\ref{lm:sumlietr} that $\dim (V_1) = \dim(V_2)$. Using the fact that $\Spin(8)$ acts transitively on $\eS^7 \times \eS^7$, we may assume that both $W_1$ and $W_2$, defined as in Lemma~\ref{lm:sumlietr}, contain~$e$.
It follows that $\Pu(W_i)=W_i$ and $\bar W_i = W_i$ for $i=1,2$.
Taking the bracket
\[
[(0,0,v,0),(0,0,e,0)] = (4\lambda^2(v \wedge e),0,0,0),
\]
we see that $[\ell,\ell]$ contains the elements $(\lambda^2(v \wedge e),0,0,0)$ for all $v \in W_1$. Setting $\eta := (2\lambda^2(v \wedge e),0,0,0)$, we obtain
\[
\ad_\eta(0,0,0,w) = (0,0,0,2\lambda(v \wedge e)w) ) = (0,0,0,L_{\bar v} (w)) = (0,0,0,\bar vw)\in V_2
\]
for all $v \in \Pu(W_1)$, $w \in W_2$. In particular, since $e \in W_2$, it follows that $W_1 \subseteq W_2$.

This shows that $W_1=W_2$ and furthermore, that $W_1$ is a subalgebra of~$\Ca$. Hence $W_1$ is isomorphic to one of $\R, \C, \H,$ or $\Ca$. In the last case, the assertion of the lemma is trivial. Otherwise, we may assume $W_1$ is a subalgebra of the standardly embedded $\H = \spann \{e, e_1, e_2, e_3\}$ after applying an automorphism of~$\Ca$. Indeed: if~$W_1 \cong \R$ and $W_1= \spann\{e\}$, there is nothing to prove; if $W_1 \cong \C$ and $W_1= \spann\{e,v\}$ for some unit vector $v \in \Pu(\Ca)$, there is an element~$f$ in the automorphism group~$\LG_2$ of~$\Ca$ such that $f(v)=e_1$, since $\LG_2$ acts transitively on the unit sphere in~$\Pu(\Ca)$; if $W_1 \cong \H$ and $W_1= \spann\{e,v, w, vw\}$ for unit vectors $v,w \in \Pu(\Ca)$ with $v \perp w$, there is an element~$f$ in the automorphism group~$\LG_2$ of~$\Ca$ such that $f(v)=e_1$, $f(w)=e_2$, and hence $f(vw)=e_1e_2=e_3$, since $\LG_2$ acts transitively on the Stiefel manifold of orthonormal two-frames in~$\Pu(\Ca)$, cf.~\cite{oniscik}. Now the lemma follows from Proposition~\ref{prop:quathyp} and Remark~\ref{rem:hplanes}.
\end{proof}


\section{Polar actions}\label{sec:polact}


The following criterion for an isometric action on~$M$ to be polar was proved in~\cite[Proposition~2.3]{DDK12} for an arbitrary Riemannian symmetric space of non-compact type. Note that sections of polar actions are always totally geodesic submanifolds.

\begin{proposition}\label{prop:criterion}
Let $\Sigma$ be a connected totally geodesic submanifold of~$\M$ with $o \in
\Sigma$. Let $H$ be a closed subgroup of~$I(\M)$. Then $H$ acts polarly on~$\M$
with section~$\Sigma$ if and only if $T_o\Sigma$ is a section of the slice
representation of~ $H_o$ on~$N_o(H \cdot o)$, and $\langle
\g{h},[T_o\Sigma,T_o\Sigma]\rangle=0$.
\end{proposition}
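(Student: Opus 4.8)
The statement in question is Proposition~\ref{prop:criterion}, the polarity criterion quoted from \cite{DDK12}. Since this is quoted verbatim from another source, the natural ``proof'' here is really a reconstruction of the argument; let me sketch how I would prove it from scratch.

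---

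The plan is to reduce the polarity condition to two separate requirements, one at the level of the orbit through the base point~$o$ and one ``homogeneity'' requirement that propagates the section property along the whole orbit, exactly as in the compact case but being careful about completeness and the non-compact geometry. First I would recall the standard fact that a section~$\Sigma$ of a polar action is automatically totally geodesic and that, up to conjugation, one may assume $o \in \Sigma$; so it is harmless to take $\Sigma$ totally geodesic with $o\in\Sigma$ from the outset. The ``only if'' direction is the easy half: if $H$ acts polarly with section $\Sigma$, then $\Sigma$ meets $H\cdot o$ orthogonally at~$o$, so $T_o\Sigma \subseteq N_o(H\cdot o)$, and since $\Sigma$ meets \emph{all} orbits and meets them orthogonally, in particular it meets all the $H_o$-orbits in the normal space $N_o(H\cdot o)$ (locally, via the exponential map / slice theorem) orthogonally --- which is precisely the assertion that $T_o\Sigma$ is a section for the slice representation of $H_o$ on $N_o(H\cdot o)$. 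The condition $\langle \g{h},[T_o\Sigma,T_o\Sigma]\rangle = 0$ comes from the infinitesimal characterization of when a totally geodesic submanifold through $o$ is orthogonal to all nearby orbits: for $X\in\g{h}$, the Killing field $X^*$ must be normal to $\Sigma$ along $\Sigma$, and differentiating the relation $\langle X^*, \cdot\rangle = 0$ along $\Sigma$ produces the term $\langle [X, T_o\Sigma], T_o\Sigma\rangle = \langle \g h, [T_o\Sigma, T_o\Sigma]\rangle$ via the symmetric-space identity $\nabla_Y X^* = [X,Y]_{\g p}$ at $o$ for $X\in\g h$.

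For the ``if'' direction I would argue as follows. Assume the two conditions. The first condition (section for the slice representation) guarantees, via the slice theorem, that $\Sigma$ meets every orbit near~$o$ and does so orthogonally near~$o$. The content of the second condition is that orthogonality is not just a pointwise fact at~$o$ but persists: using the symmetric space structure and the totally geodesic nature of $\Sigma$ (so $\Sigma = \Exp_o(T_o\Sigma)$ and parallel transport along $\Sigma$ keeps $T\Sigma$ invariant), the condition $\langle \g h, [T_o\Sigma, T_o\Sigma]\rangle = 0$ forces every Killing field $X^*$, $X\in\g h$, to remain orthogonal to $\Sigma$ along all of $\Sigma$. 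Indeed the derivative of $t\mapsto \langle X^*_{\gamma(t)}, \dot\gamma(t)\rangle$ along a geodesic $\gamma$ in $\Sigma$ vanishes because $\nabla_{\dot\gamma}\dot\gamma = 0$ (totally geodesic) and $\langle \nabla_{\dot\gamma}X^*, \dot\gamma\rangle$ lies in $\langle[\g h, T\Sigma], T\Sigma\rangle$, which is zero by hypothesis plus $T\Sigma$-invariance under the relevant curvature/bracket operations; more generally one checks $\langle X^*, Y\rangle \equiv 0$ on $\Sigma$ for all $Y$ tangent to $\Sigma$ by a similar first-order computation. This says $\Sigma$ is orthogonal to every $H$-orbit it meets. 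It remains to see $\Sigma$ meets \emph{every} orbit of $H$ on all of $\M$, not just those near $o$: here one invokes that $H\cdot o$ and $\Sigma$ are both complete, the normal exponential map of the orbit is surjective (since $\M$ is a Hadamard manifold, hence has no conjugate points), and the orthogonality already established implies $\Sigma$ is a connected component of $\exp(N(H\cdot o))$ locally, which by a connectedness/completeness argument must exhaust a full section; combined with the slice-representation condition this gives that $\Sigma$ meets all orbits.

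The main obstacle is the ``if'' direction, specifically showing that the two \emph{infinitesimal} conditions at the single point~$o$ actually imply the \emph{global} statement that $\Sigma$ is a section. The orthogonality-propagation step is the delicate one: one must use in an essential way that $\M$ is a symmetric space of non-compact type (so a Hadamard manifold: complete, simply connected, nonpositively curved, no conjugate or cut points), because this is what lets a submanifold built by exponentiating a Lie triple system be complete and lets the normal exponential map of an orbit be a diffeomorphism onto its image or at least surjective. In the compact case one would have to worry about the section closing up or about cut loci; in the non-compact setting these difficulties disappear, which is exactly why the clean criterion of \cite{DDK12} holds. I would therefore cite \cite[Proposition~2.3]{DDK12} for the full details and present the above only as the guiding idea, noting that the totally geodesic hypothesis on $\Sigma$ together with $\g h$-orthogonality at~$o$ being upgraded to $\g h$-orthogonality along $\Sigma$ is the heart of the matter.
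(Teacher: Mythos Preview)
The paper does not give a proof of Proposition~\ref{prop:criterion} at all; it simply states the criterion and cites \cite[Proposition~2.3]{DDK12} as its source. You correctly identify this, also cite \cite{DDK12}, and then go further by supplying a reasonable sketch of the argument behind the cited result. So your proposal is consistent with the paper's approach (both defer to \cite{DDK12}); your sketch is additional expository content rather than a competing proof strategy. One minor remark on the sketch: in the orthogonality-propagation step, note that $\langle \nabla_{\dot\gamma}X^*,\dot\gamma\rangle=0$ holds automatically for any Killing field, so the bracket condition is really needed to control $\langle X^*,Y\rangle$ for $Y\in T\Sigma$ transverse to~$\dot\gamma$, via Jacobi fields along~$\gamma$ tangent to~$\Sigma$; you hint at this but the wording could be sharpened.
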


Isometric Lie group actions of cohomogeneity one (i.e.\ the regular orbits are hypersurfaces) are a special case of polar actions of independent interest. They have been classified by Hsiang-Lawson~\cite{hsl} on spheres, on complex projective space by Takagi~\cite{takagi}, on quaternionic projective space by D'Atri~\cite{datri}, and on the Cayley projective plane by~Iwata~\cite{iwata}. In~\cite{k02}, the author classified all cohomogeneity one actions on the remaining compact irreducible Riemannian symmetric spaces. For classification results concerning isometric cohomogeneity-one actions on non-compact Riemannian symmetric spaces see~\cite{BT04,BT07} and the references therein.

Assume there are Riemannian manifolds~$X,\,Y$ and Lie groups~$G,\,H$ such that $G$ acts isometrically on~$X$ and $H$ acts isometrically on~$Y$. Then we say the $G$-action on~$X$ and the $H$-action on~$Y$
are \emph{orbit equivalent} if there is an isometry $f \colon X \to Y$ such that $f$ maps each connected component of a $G$-orbit in~$X$ to a connected component of an $H$-orbit in~$Y$.

\begin{example}\label{ex:horo}
Consider the action of the nilpotent group~$N$ on~$\M$. Since
the group $AN$ acts simply transitive on~$\M$ and $\dim(N)=\dim(AN)-1$, the $N$-action is of cohomogeneity one. Indeed, the orbits of this action are the leaves of the well-known horospherical foliation on~$\M$, where each orbit is a horosphere.
\end{example}

\begin{example}\label{ex:coh1A}
Let $\g{m} \subset \g{v}$ be a linear subspace of codimension one. Then $\g{h} = \g{a} + \g{m} + \g{z}$ is a subalgebra of $\g{a} + \g{n}$ of codimension one and the closed connected subgroup of~$\G$ with Lie algebra~$\g{h}$ acts with cohomogeneity one on~$\M$. Since there are no singular orbits, the orbits of this action are the leaves of a regular foliation of codimension one.
\end{example}

By the results of~\cite{BT03}, the orbit foliations given in the Examples~\ref{ex:horo} and~\ref{ex:coh1A} above exhaust all orbit equivalence classes of isometric cohomogeneity one actions without singular orbits on~$\M$. In \cite{BT04}, isometric cohomogeneity one actions on irreducible non-compact symmetric spaces with a totally geodesic singular orbit were classified.

\begin{example}\label{ex:contg}
Let $\g{m} \subset \g{v}$ be a linear subspace of codimension~$d$, let \[N_{\g{k}_0}(\g{m}) = \{ X \in \g{k}_0 \colon [X,\g{m}] \subseteq \g{m} \}\] be the normalizer of~$\g{m}$ in~$\g{k}_0$ and let $N_{K_0}(\g{m})$ be the normalizer of~$\g{m}$ in~$K_0$. Then $\g{h} = N_{\g{k}_0}(\g{m}) + \g{a} + \g{m} + \g{z}$ is a subalgebra of $\maxpar$. It follows from~\cite{oniscik}, see also~\cite[p.~3435\textit{f}]{BT07}, that $N_{K_0}(\g{m})= \LG_2$ if $d=1$ or~$7$, that $N_{K_0}(\g{m})= \U(3)$ if $d=2$ or~$6$, and that $N_{K_0}(\g{m})= \SO(4)$ if $d=3$ or~$5$. Furthermore, it is easily seen that: in case $d=1$, the action is orbit equivalent to Example~\ref{ex:coh1A}, in the cases where $d=2,3,6,7$ the slice representation of the $H$-action at~$o$ is of cohomogeneity one.
It is also shown in~\cite{BT07} that if $d=4$, the slice representation is of cohomogeneity one and there is a one-parameter family of orbit equivalence classes of these actions.

If the subspace $\g{m}$ is trivial, then $N_{K_0}(\g{m}) = \Spin(7)$ and $H$ acts also with cohomogeneity one, however, the $H$-orbit is a totally geodesic $\HH^8$ and the $H$-action on~$\M$ is orbit equivalent to the $\Spin(1,8)$-action, see Section~\ref{sec:totgorb}.

Thus the corresponding closed connected subgroup $H$ of~$\G$ with Lie algebra~$\g{h}$ acts with cohomogeneity one on~$\M$ if $d \in \{1,2,3,4,6,7,8\}$.
\end{example}

It is shown in~\cite{BT07} that the actions in Examples~\ref{ex:horo}, \ref{ex:coh1A}, and \ref{ex:contg} and the actions of~$\Spin(9)$, $\Sp(1) \cdot \Sp(1,2)$ and $\Spin(1,8)$ exhaust all orbit equivalence  classes of cohomogeneity one actions on~$\M$. Indeed, up to orbit equivalence, the actions in Examples~\ref{ex:contg} with $d \in \{2,3,4,6,7\}$ are exactly the actions of cohomogeneity one with a non-totally geodesic singular orbit.

As an immediate application of the criterion in Proposition~\ref{prop:criterion}, we show that a certain regular homogeneous foliation of~$\M$ is polar.

\begin{example}\label{ex:coh2nilp}
Consider the subalgebra $\g{h} = \g{m} + \g{z}$ of~$\g{n}$, where $\g{m} = \{(0,x,0,x) \colon x \in \Pu(\Ca)\}$. Let $H$ be the closed connected subgroup of~$\G$ whose Lie algebra is~$\g{h}$. This group acts with cohomogeneity two on~$\M$. We will show that the action is polar.
The normal space $N_o(H \cdot o) \subset \g{p}^*$ is spanned by the vectors $(e,0)$, $(0,e)$ and it follows that $[T_o\Sigma,T_o\Sigma]$ is spanned by the vector $[(e,0), (0,e)] = (0,-e,0,0)$, which is orthogonal to~$\g{h}$. Thus it follows by Proposition~\ref{prop:criterion} that the action is polar. It follows from Lemma~\ref{lm:totgeod} that the section is a totally geodesic $\R \HH^2$.  Note that this action is not orbit equivalent to any of the actions in Table~\ref{tbl:actions} since it has no singular orbits.
\end{example}

\begin{lemma}\label{lm:nochsec}
A polar action on~$\M$ with a section congruent to~$\C \HH^2$ or~$\H \HH^2$ has no singular orbits.
\end{lemma}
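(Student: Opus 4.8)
The plan is to argue by contradiction, using that slice representations of polar actions are again polar and that the generalized Weyl group of a polar representation is generated by reflections. Suppose $H$ acts polarly on $\M$ with section $\Sigma$ congruent to $\C\HH^2$ or $\H\HH^2$, and suppose $H$ has a singular orbit $\mathcal{O}$. Since $\Sigma$ meets every orbit we may pick $p\in\Sigma\cap\mathcal{O}$, and after conjugating $H$ and $\Sigma$ by an isometry of $\M$ we may assume $p=o\in\Sigma\cap\mathcal{O}$. By Proposition~\ref{prop:criterion}, $T_o\Sigma$ is then a section for the slice representation of $H_o$ on $N_o(\mathcal{O})$. Because $\mathcal{O}$ is singular its dimension is strictly smaller than $\dim\M-\dim\Sigma$, so $\dim N_o(\mathcal{O})>\dim T_o\Sigma$; hence this slice representation has cohomogeneity $\dim\Sigma$ strictly less than the dimension of the space it acts on, so its orbits have positive dimension and, in particular, the origin is not one of its principal points.

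Now I would invoke two standard facts: the slice representation at a point of a polar action is a polar representation, and the generalized Weyl group of a polar representation acts on its section as a finite group generated by linear reflections, the union of whose reflection hyperplanes is exactly the set of non-principal points in the section. Applied to the slice representation of $H_o$ on $N_o(\mathcal{O})$ with section $T_o\Sigma$, this shows that the generalized Weyl group $W=N_{H_o}(T_o\Sigma)/Z_{H_o}(T_o\Sigma)$ is non-trivial (the origin being non-principal forces the non-principal locus, hence some reflection hyperplane, to exist) and contains an element $r$ acting on $T_o\Sigma$ as a linear reflection, so $\det(r)=-1$. On the other hand, any $g\in N_{H_o}(T_o\Sigma)$ is an isometry of $\M$ fixing $o$ with $dg_o(T_o\Sigma)=T_o\Sigma$; since $\Sigma=\exp_o(T_o\Sigma)$ (the submanifold $\Sigma$, being congruent to $\C\HH^2$ or $\H\HH^2$, is complete and simply connected), $g$ restricts to an isometry of $\Sigma$ fixing $o$, so $dg_o|_{T_o\Sigma}$ lies in the linear isotropy group of $\Sigma$ at $o$. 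Thus $W$, and in particular $r$, is contained in this linear isotropy group.

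The contradiction then comes from the fact that the linear isotropy group of $\C\HH^2$ or $\H\HH^2$ at a point lies inside the special orthogonal group of the tangent space, hence cannot contain a reflection. For $\C\HH^2$ this holds because every isometry is holomorphic or antiholomorphic and a $\C$-linear or $\C$-antilinear orthogonal transformation of a two-dimensional complex vector space has real determinant $+1$; for $\H\HH^2$ it holds because the linear isotropy group is the connected group $\Sp(1)\cdot\Sp(2)\subset\SO(8)$. Therefore $H$ has no singular orbit; running the same argument with the (then finite) Weyl group of the slice representation also excludes exceptional orbits. The step needing the most care is the second paragraph: one must use that the Weyl group of the polar slice representation is a genuine reflection group detecting the non-principal locus, and check that the isometry groups of $\C\HH^2$ — which is disconnected, because of the antiholomorphic isometries — and of $\H\HH^2$ act on tangent spaces without reflections. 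The reduction to the slice representation and the transport of $W$ into the isometry group of $\Sigma$ via $\exp$ are routine.
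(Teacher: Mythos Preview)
Your argument is correct and follows essentially the same route as the paper: reduce to the polar slice representation at a point of the singular orbit lying on the section, extract a reflection from its generalized Weyl group, and derive a contradiction. The only variation is in the final step: the paper observes that the fixed-point set of that reflection would be a totally geodesic hypersurface in~$\Sigma$, which~$\C\HH^2$ and~$\H\HH^2$ do not possess (citing Wolf), whereas you argue that the reflection would have to lie in the linear isotropy of~$\Sigma$, which is orientation-preserving; these two observations are equivalent for symmetric spaces, so the proofs are essentially identical.
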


\begin{proof}
Assume there is a singular orbit. Then this singular orbit contains a point~$p \in \Sigma$ and the slice representation of~$H_p$ on~$N_p(H \cdot p)$ is a polar representation with section~$T_p\Sigma$ and orbits of positive dimension. It follows that the generalized Weyl group~$W(T_p\Sigma)$ of the $H_p$-action on~$N_p(H \cdot p)$ contains an element~$w$ which acts on~$T_p\Sigma$ as a reflection. This implies that $\Sigma$ contains the totally geodesic hypersurface which is given by the connected component of the fixed point set of~$w$ on~$\Sigma$. Hence we have arrived at a contradiction since neither $\C \HH^2$ nor~$\H \HH^2$ contains totally geodesic hypersurfaces, cf.~\cite{Wo63}.
\end{proof}


\section{Classification of polar actions with an invariant totally geodesic subspace}\label{sec:totgorb}


In this section we will study polar actions on~$\M$. Since it has been a successful strategy in the special case of cohomogeneity one actions to start with actions which have a totally geodesic singular orbit~\cite{BT04}, we will also proceed along these lines.
It turns out that this approach can be refined to include actions that leave a totally geodesic subspace invariant which is not necessarily an orbit.

In~\cite[Theorem~8.5]{Ch73}, the following classification of closed connected subgroups in~$\G$ was obtained.

\begin{theorem}[Chen~1973]\label{th:subgroups}
Let $H$ be a connected Lie subgroup of~$\G$, then $H$ is a conjugate of the
following:
\begin{enumerate}

\item a subgroup of~$\Spin(9)$,

\item a subgroup of the invariant group of a boundary point,

\item $\Spin(1,m) \cdot L$, where $L \subseteq \Spin(8-m)$, $2 \le m \le 8$,

\item $\SO_0(1,2) \cdot L$, where $L \subseteq \mathrm{G}_2$,

\item $\SU(1,2) \cdot L$, where $L \subseteq \SU(3)$,

\item $\Sp(1,2) \cdot L$, where $L \subseteq \Sp(1)$,

\item $\G$.

\end{enumerate}
\end{theorem}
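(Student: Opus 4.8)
The plan is to derive Theorem~\ref{th:subgroups} from the general structure theory of subgroups of semisimple Lie groups together with the list of totally geodesic subspaces of~$\M$ furnished by Proposition~\ref{prop:totgeod}. The starting point is the dichotomy valid for any subalgebra $\g{h}$ of a semisimple Lie algebra: either $\g{h}$ is contained in a proper parabolic subalgebra of~$\g{g}^*$, or $\g{h}$ is reductive in~$\g{g}^*$ (that is, $\g{g}^*$ is a completely reducible $\ad(\g{h})$-module); this goes back to Mostow and Karpelevich. Since $\M$ has rank one, there is up to conjugacy exactly one proper parabolic subalgebra of~$\g{g}^*$, namely $\maxpar$, and the corresponding subgroup is the stabilizer in~$\G$ of a point at infinity, i.e.\ the ``invariant group of a boundary point''. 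Hence in the first case $H$ is conjugate to a subgroup of that group, which is alternative~(ii).

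In the reductive case I would invoke Mostow's theorem that a reductive subalgebra is stable under a suitable Cartan involution of~$\g{g}^*$, so that $\g{h} = (\g{h}\cap\g{k}')\oplus(\g{h}\cap\g{p}')$ for an adapted Cartan decomposition $\g{g}^*=\g{k}'+\g{p}'$, and $\g{h}\cap\g{p}'$ is a Lie triple system. If $\g{h}\cap\g{p}'=0$, then $H$ fixes the point of~$\M$ stabilized by the corresponding maximal compact subgroup, so $H$ is conjugate to a subgroup of $K\cong\Spin(9)$, which is alternative~(i). If $\g{h}\cap\g{p}'=\g{p}'$, then, since $\M$ is irreducible, $\g{h}\supseteq\g{p}'+[\g{p}',\g{p}']=\g{g}^*$, so $H=\G$, alternative~(vii). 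Otherwise $\g{l}:=\g{h}\cap\g{p}'$ is a proper nonzero Lie triple system, $\Sigma:=\Exp(\g{l})$ is a totally geodesic subspace of~$\M$ of positive dimension which is invariant under~$H$, and $H$ is contained in the connected normalizer $N(\Sigma)_0$. Running through Proposition~\ref{prop:totgeod} and identifying, for each congruence class of~$\Sigma$, how the group of transvections of~$\Sigma$ is realized inside~$\G$ together with the pointwise stabilizer of~$\Sigma$, one finds $N(\Sigma)_0$ equal to $\Spin(1,m)\cdot\Spin(8-m)$ for $\Sigma\cong\HH^m$ with $2\le m\le 8$ (alternative~(iii)), to $\SO_0(1,2)\cdot\LG_2$ for $\Sigma\cong\R\HH^2$ (alternative~(iv)), to $\SU(1,2)\cdot\SU(3)$ for $\Sigma\cong\C\HH^2$ (alternative~(v)), and to $\Sp(1,2)\cdot\Sp(1)$ for $\Sigma\cong\H\HH^2$ (alternative~(vi)). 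The remaining possibility $\Sigma\cong\HH^1$, a geodesic, gives $N(\Sigma)_0=\SO_0(1,1)\cdot\Spin(7)$, whose Lie algebra $\g{a}+\g{k}_0$ lies inside $\maxpar$; this is therefore already covered by alternative~(ii), which explains why (iii) starts at $m=2$.

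The conceptually delicate input is the general Lie-theoretic part — the parabolic-versus-reductive dichotomy and the existence of an adapted Cartan involution for a reductive subalgebra — but these are classical and can simply be quoted. Granting them, the proof reduces to determining the connected normalizers $N(\Sigma)_0$ of the totally geodesic subspaces of~$\M$, i.e.\ to exhibiting the subgroups $\Spin(1,m)\cdot\Spin(8-m)$, $\SO_0(1,2)\cdot\LG_2$, $\SU(1,2)\cdot\SU(3)$, $\Sp(1,2)\cdot\Sp(1)$ explicitly inside~$\G$ and checking that nothing larger normalizes the corresponding Lie triple system; this is the step I would expect to be the main obstacle in a self-contained treatment, and it is precisely the kind of computation for which the octonionic model of~$\g{g}^*$ developed in Sections~\ref{sec:f4cpt} and~\ref{sec:isom} is well suited (compare Propositions~\ref{prop:realhyp} and~\ref{prop:quathyp} and Lemma~\ref{lm:totgeod}, which already produce the relevant subalgebras such as $\g{sp}(1,2)$). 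This is essentially the route taken by Chen~\cite{Ch73}, and for the present purposes the theorem is simply cited from there.
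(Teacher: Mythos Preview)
Your proposal is correct and, in fact, goes beyond what the paper does: the paper gives no proof of Theorem~\ref{th:subgroups} at all, it merely quotes the result from Chen~\cite[Theorem~8.5]{Ch73}. Your outline of the parabolic-versus-reductive dichotomy, the adapted Cartan involution, and the identification of $N(\Sigma)_0$ via the list of totally geodesic subspaces is a faithful sketch of Chen's argument, and you correctly observe in your final sentence that for the purposes of this paper the theorem is simply cited.
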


This theorem implies, together with the following proposition, that the proper closed subgroups of~$\G$ which do not leave a totally geodesic subspace invariant, are as described in item~(ii) of Theorem~\ref{th:subgroups}.

\begin{proposition}\label{prop:tgnorm}
Let $P$ be a totally geodesic subspace of~$\M$. Let \[N(P) = \{ g \in \G \colon g\cdot P = P \}\] be its normalizer and let \[Z(P) = \{ g \in \G \colon g \cdot p = p \;\forall p \in P \}\] be its centralizer in~$\G$.
Then their connected components $Z(P)_0$ and $N(P)_0$ are given (up to automorphisms of~$\G$) in Table~\ref{tbl:actions}. Furthermore, for each action of~$N(P)_0$ on~$\M$, a non-trivial slice representation, the cohomogeneity, and the fact whether it is polar or not, is stated.

\begin{table}[h]
\begin{minipage}{\textwidth}
\[\begin{array}{|c|c|c|c|c|c|}
\hline
P & Z(P)_0 & N(P)_0 & \textsl{slice representation} & \textsl{cohom} & \textsl{polar?} \\

\hline

\{\mathrm{pt}\} & \Spin(9) & \Spin(9) & (\Spin(9),\R^{16}) & 1 & \textrm{yes} \\ 

 \R \HH^2 & \LG_2 & \LG_2 {\cdot} \SO_0(1,2) & (\LG_2\times\O(2),\Pu(\Ca) \otimes_\R \R^2) & 2 & \textrm{yes} \\ 

 \C \HH^2 & \SU(3) & \SU(3) {\cdot} \SU(1,2) & (\SU(3)\times\U(2),\C^3 \otimes_\C \C^2) & 2 & \textrm{yes} \\ 

 \H \HH^2 & \Sp(1) & \Sp(1) {\cdot} \Sp(1,2) & (\Sp(1)\times\Sp(2),\H^1 \otimes_\H \H^2) & 1 & \textrm{yes} \\ 

 \HH^1 & \Spin(7) & \Spin(7) {\cdot} \SO_0(1,1) & (\Spin(7),\R^7 \oplus \R^8) & 2 & \textrm{yes} \\ 

 \HH^2 & \SU(4) & \Spin(6){\cdot} \Spin(1,2) & (\U(4),\R^6 \oplus \C^4) & 2 & \textrm{yes} \\ 

 \HH^3 & \Sp(2) & \Spin(5) {\cdot} \Spin(1,3) & (\Sp(1){\cdot}\Sp(2),\R^5\oplus\H\otimes_{\mathbb H}\H^2) &  3 &  \textrm{no} \\ 

 \HH^4 & \Sp(1) {\cdot} \Sp(1) & \Spin(4) {\cdot} \Spin(1,4) & (\Sp(1)^4,\H \oplus \H \oplus \H) & 3 & \textrm{no} \\ 

 \HH^5 & \Sp(1) & \Spin(3)  {\cdot} \Spin(1,5) & (\Sp(1){\cdot}\Sp(2),\R^3 \oplus \H^2) & 2 & \textrm{yes} \\ 

 \HH^6 & \SO(2) & \SO(2) {\cdot} \Spin(1,6) & (\U(4), \R^2 \oplus \C^4) & 2 & \textrm{yes} \\ 

 \HH^7 & \{1\} & \Spin(1,7) & (\Spin(7),\R\oplus\R^8) & 2 & \textrm{yes} \\ 

 \HH^8 & \{1\} & \Spin(1,8) & (\Spin(8),\R^8) & 1 & \textrm{yes} \\ \hline

\end{array}\]
\begin{center}
\hfill\\
\caption{Isometric actions on~$\Ca\HH^2$ by connected normalizers of totally geodesic submanifolds.}\label{tbl:actions}
\hfill\\
\end{center}
\end{minipage}
\end{table}
\end{proposition}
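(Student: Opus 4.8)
The plan is to go through the twelve cases in Table~\ref{tbl:actions} one congruence class of totally geodesic subspace at a time. For each of them fix a representative $P\ni o$ with Lie triple system $\ell=T_oP\subseteq\g{p}^*$, taken from the explicit list of Section~\ref{sec:totgeod}: the trivial system for $\{\mathrm{pt}\}$; $V\times\{0\}$ (Proposition~\ref{prop:realhyp}) for $\HH^m$; and $K\times K$ with $K\subseteq\H$ a subalgebra (Lemma~\ref{lm:subalgtg}, Remark~\ref{rem:hplanes}) for $\R\HH^2$, $\C\HH^2$, $\H\HH^2$. Since an isometry in the identity component fixing $o$ lies in $K$, and since a Killing field preserving $P$ must be tangent to $P$ along $P$, so in particular its value at $o$ lies in $\ell$, the standard description applies: the Lie algebra of $N(P)_0$ is $\g{n}(P)=\g{k}_P\oplus\ell$ with $\g{k}_P:=\{X\in\g{k}\colon[X,\ell]\subseteq\ell\}$, and the Lie algebra of $Z(P)_0$ is $\g{z}(P)=\{X\in\g{k}\colon[X,\ell]=0\}$. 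The orbit $N(P)_0\cdot o$ is exactly $P$, and the nontrivial slice representation at $o$ is the action of the isotropy group $H_o$, whose identity component has Lie algebra $\g{k}_P$, on the normal space $N_o(P)=\ell^{\perp}\subseteq\g{p}^*$, of dimension $16-\dim P$.

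The first task is then to compute $\g{z}(P)$, $\g{n}(P)$ and this slice action explicitly for each of the twelve cases, using the bracket~(\ref{eq:ncbracket}) (for the compact-type subspaces $\R\HH^2$, $\C\HH^2$, $\H\HH^2$ it is more convenient to compute with the compact bracket~(\ref{eq:cbracket}) in the model of Section~\ref{sec:f4cpt} and then pass to $\g{g}^*$ by duality), and then to identify the reductive subalgebras and modules that appear. For instance: $\ell=0$ gives $\g{n}(P)=\g{k}\cong\so(9)$, with slice representation the isotropy representation $(\Spin(9),\R^{16})$; the $\H\HH^2$ case is covered by Proposition~\ref{prop:quathyp}, so that $\g{z}(P)$ is the first $\g{sp}(1)$-factor of $\g{sp}(1)^3$ and, through~(\ref{eq:sp13descr}), $\g{n}(P)\cong\g{sp}(1)\oplus\g{sp}(1,2)$; for $P=\HH^m$ one uses Lemma~\ref{lm:sonine} to recognise $\g{k}_P\oplus\ell$ inside $\so(8)\oplus\Ca\cong\so(9)$ as a copy of $\so(m)\oplus\so(1,m)$, with the centralizing $\so(8-m)$-factor acting, via the appropriate $\lambda^i$-twist, on the spin-type summand of $\ell^{\perp}$; and so on. Reading off the isotropy action on $\ell^{\perp}$ yields the ``slice representation'' column, and the ``cohom'' column is then simply the cohomogeneity of that linear representation, since the cohomogeneity of an isometric action equals that of its slice representation at any point.

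It remains to settle polarity. The three rows of cohomogeneity one ($\{\mathrm{pt}\}$, $\H\HH^2$, $\HH^8$) are polar, with a normal geodesic through a principal orbit as a section. For the cohomogeneity-two ``yes'' rows I would apply Proposition~\ref{prop:criterion}: the slice representations that occur (such as $(\LG_2\times\O(2),\Pu(\Ca)\otimes_\R\R^2)$ or $(\U(4),\R^6\oplus\C^4)$) are polar representations of cohomogeneity two, so there is a $2$-plane $\Sigma_0\subseteq\ell^{\perp}$ which is a section of the slice representation; taking $\Sigma$ to be the totally geodesic surface with $T_o\Sigma=\Sigma_0$, it suffices to verify $\langle\g{n}(P),[\Sigma_0,\Sigma_0]\rangle=0$, and since $[\Sigma_0,\Sigma_0]\subseteq\so(8)$ while the $\g{p}^*$-part of $\g{n}(P)$ is $\ell\perp\Sigma_0$, this reduces to the single check $[\Sigma_0,\Sigma_0]\perp\g{k}_P$, a short computation with~(\ref{eq:ncbracket}). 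For the two ``no'' rows, $\HH^3$ and $\HH^4$, I would argue by contradiction: a section would be a $3$-dimensional totally geodesic subspace, hence an $\HH^3$ by Proposition~\ref{prop:totgeod}, and its tangent space would be a section of the slice representation, forcing the latter --- namely $(\Sp(1)\cdot\Sp(2),\R^5\oplus\H\otimes_\H\H^2)$, respectively $(\Sp(1)^4,\H\oplus\H\oplus\H)$ --- to be a polar representation; but a direct comparison with the classification of polar representations (equivalently, the isotropy representations of symmetric spaces) shows that neither is, a contradiction. Alternatively, the polarity statements in the ``converse'' direction follow from the classification of polar actions on $\Ca\PP^2$ of~\cite{PT99} by duality, as in~\cite{k11}.

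I expect the main obstacle to lie in two places. One is ruling out polarity in the $\HH^3$ and $\HH^4$ cases: the ``polar'' verifications are routine once the slice representations have been identified, but showing that these two particular slice representations fail to be polar is the one genuinely non-formal point, and it is the reason these two rows are exceptional. The other is the bookkeeping involved in identifying, uniformly across all twelve cases, the subalgebras $\g{z}(P)$ and $\g{n}(P)$ and their isotropy actions inside the triality-twisted model of $\g{g}^*$ --- the danger being to lose track of which $\lambda^i$-twist each octonionic summand carries, or to misidentify which copy of $\so(k)$, $\su(k)$ or $\g{sp}(k)$ (in the sense of~(\ref{eq:sp13descr}) and Lemma~\ref{lm:sonine}) actually appears in a given row.
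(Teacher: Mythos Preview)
Your plan is sound and essentially self-contained, but it differs markedly from what the paper actually does. The paper's proof is a single sentence: the entire table is obtained by duality from the corresponding table for~$\Ca\PP^2$ in~\cite[Addendum]{gk16}, via~\cite{k11}. In other words, the paper does not compute anything here; it imports the normalizers, centralizers, slice representations, cohomogeneities, and polarity verdicts wholesale from the compact dual, where they were already established (ultimately going back to~\cite{PT99}). Your approach instead carries out the computation directly inside the octonionic model: identifying $\g{z}(P)$ and $\g{k}_P$ from the bracket~(\ref{eq:ncbracket}), reading off the slice module, and then settling polarity via Proposition~\ref{prop:criterion} for the ``yes'' rows and via non-polarity of the slice representation for the ``no'' rows. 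This is a genuinely different route. What your approach buys is independence from the external tables and a good illustration of the model's usefulness; what the paper's approach buys is brevity and the avoidance of twelve separate computations.

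Two small corrections to your sketch. First, for $P=\H\HH^2$ the centralizer $\g{z}(P)$ is \emph{not} one of the three simple ideals of $\g{sp}(1)^3$; it is the extra $\g{sp}(1)$-summand~$\g{c}$ described just after Proposition~\ref{prop:quathyp}, so that $\g{n}(P)=\g{c}\oplus\g{sp}(1,2)$ with $\g{sp}(1,2)=\g{sp}(1)^3\times\H^3$. Second, in your polarity check you write $[\Sigma_0,\Sigma_0]\subseteq\so(8)$; in fact one only has $[\Sigma_0,\Sigma_0]\subseteq\g{k}=\so(8)\times\Ca$, but your conclusion survives, since $\ell\subseteq\g{p}^*\perp\g{k}$ and so the orthogonality $\langle\g{n}(P),[\Sigma_0,\Sigma_0]\rangle=0$ still reduces to $[\Sigma_0,\Sigma_0]\perp\g{k}_P$.
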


\begin{proof}
Follows by duality~\cite{k11} from the Table in~\cite[Addendum]{gk16}.
\end{proof}

\begin{remark}\label{rem:actions}
In Table~\ref{tbl:actions} the information about a slice representation at a point~$p$ in the totally geodesic orbit~$P$ is given in the following form: a pair~$(G,V)$ for every action such that the linear action of~$G$ on~$V$ is equivalent to the effectivized slice representation restricted to the connected component $(N(P)_p)_0$ of the isotropy subgroup at~$p$.
The slice representation of the $\Spin(4) \cdot \Spin(1,4)$-action is equivalent to the action of~$\Sp(1)^4$ on~$\H^3$ given by $(a,b,c,d) \cdot (x,y,z) = (axb^{-1},ayc^{-1},bzd^{-1});$ the equivalence classes of the other slice representations are clear from the table.
\end{remark}

We will now carry out the classification of polar actions on~$\M$ leaving a totally geodesic subspace invariant.
Note that slice representations of polar actions are polar, cf.~\cite[Thm.~4.6]{PaTe}; more precisely, if there is a section~$\Sigma$ of a polar action on a Riemannian manifold and we have $p \in \Sigma$, then a section for the slice representation at~$p$ is given by~$T_p\Sigma$.

\subsection{\texorpdfstring{Actions with an invariant totally geodesic subspace~$\HH^8$}{Actions with an invariant totally geodesic subspace H8}}
Assume that the closed subgroup~$H \subseteq \G$ acts polarly on~$\M$ and in such a fashion that the totally geodesic subspace~$\HH^8 = \exp_o(\Ca \times \{0\})$ is invariant. We start with the case
where $\HH^8$ is an orbit of the $H$-action.

\subsubsection{\texorpdfstring{Actions with a totally geodesic orbit~$\HH^8$}{Actions with a totally geodesic orbit H8}}

The subalgebra $\so(8) \times \{0\} \times \Ca \times \{0\}$ is isomorphic to $\so(1,8)$ and it is the Lie algebra of the connected subgroup~$\Spin(1,8)$ of~$\G$. This subgroup acts on~$\M$ with cohomogeneity one; the action has a unique singular orbit, which is the exponential image of~$\Ca \times \{0\} \subset \g{p}^*$. We denote this orbit by~$\HH^8$. Assume $H \subset \Spin(1,8)$ is a closed connected subgroup acting polarly on~$\M$ such that the orbit through~$o$ is also~$\HH^8$.

By the classification of groups acting transitively on hyperbolic spaces given in~\cite[Theorem~6]{galaev} or \cite[Thm.~3.1]{cls}, we know that $H$ is conjugate to a group of the form $(A^\Phi \times H_o) \ltimes Z$, where $H_o \subset \Spin(7)$ is a subgroup, $\Phi \colon A \to \Spin(7)$ is a homomorphism (which may be trivial) and
\[
A^\Phi = \{\Phi(a) \cdot a \colon a \in A\}.
\]
Thus we may assume $\g{h}\supseteq\g{a}^\Phi \ltimes \g{z}$.
We may furthermore assume that the $H$-action is not orbit equivalent to the $\Spin(1,8)$-action. Then the slice representation of~$H_o$ on~$\{0\} \times \Ca = T_o(H \cdot o)$ is of cohomogeneity at least two. Since the slice representation is polar, it has a section $T_o\Sigma$, where $\Sigma$ is a section of the $H$-action on~$\M$, such that~$(0,e) \in T_o\Sigma$. Since the slice representation is at least of cohomogeneity two, there is a unit vector $v \in \Pu(\Ca)$ such that $(0,v) \in T_o\Sigma$. After replacing~$H$ by the conjugate group~$gHg^{-1}$ for a suitable element~$g \in \LG_2$, see Remark~\ref{rem:gtwoact}, we may assume $v=e_1$.
In particular, by~(\ref{eq:ncbracket}) we have
\[
(4\lambda (e \wedge e_1),0,0,0) = (-2L_{\bar e_1},0,0,0)  = (2L_{e_1},0,0,0) \in [T_o\Sigma,T_o\Sigma].
\]
On the other hand, we have that $\Ad_g(\g{z}) = \g{z}$, in particular, $\g{h}$~contains the subalgebra
\[
\g{z} = \{(R_{y},0,y,0) \colon y \in \Pu(\Ca) \}
\]
and hence the element $(R_{e_1},0,e_1,0)$. An explicit calculation shows that
\[
\langle (L_{e_1},0,0,0), (R_{e_1},0,e_1,0) \rangle = \tr( L_{e_1} R_{e_1} ) = 4.
\]
Hence the $H$-action on~$\M$ is not polar by Proposition~\ref{prop:criterion}.

\subsubsection{\texorpdfstring{Actions with an invariant totally geodesic subspace~$\HH^8$ which is not an orbit}{Actions with an invariant totally geodesic subspace H8 which is not an orbit}}
Assume the action of a closed connected subgroup~$H \subset \G$ on~$\M$ leaves a totally geodesic subspace~$\HH^8$ invariant, but acts non-transitively on~$\HH^8$. Then it follows that the $H$-action induces a polar action on~$\HH^8$ by~\cite[Lemma~4.2]{k07}. By the results of~\cite{wu}, see also~\cite[p.~328]{D12}, any polar action on~$\HH^8$ leaves a totally geodesic~$\HH^k$, $k \in \{1,\dots,7\}$, or a point, invariant. Thus, the action will be treated below.

\subsection{\texorpdfstring{Actions with an invariant totally geodesic subspace~$\HH^k$, $k=1,\dots,7$}{Actions with an invariant totally geodesic subspace Hk, k=1...7}}\label{subsec:hinv}
Let $H$ be a connected closed subgroup of one of the groups~$N(P)_0$ in Table~\ref{tbl:actions}. Assume that $H$ acts polarly on~$\M$. We may furthermore assume that the $H$-action is not orbit equivalent to the $N(P)_0$-action; hence it is of cohomogeneity greater than two.
Note that the normal space $N_oP \subset \g{p}^*$ is given by $U \times \Ca$, where $U \subset \Ca$ is a linear subspace of dimension~$8-k$ and in all cases the slice representation of the $N(P)_0$-action at the point~$o$ is reducible with the two invariant subspaces $U \times \{0\}$ and $\{0\} \times \Ca$, see Table~\ref{tbl:actions}.

These are also invariant subspaces of the slice representation of~$H_o$ and it follows from \cite[Thm.~4]{Da85} that a section of the polar slice representation of~$H_o$ is of the form $V_1 \oplus V_2$, where $V_1 \subset \Ca \times \{0\}$ and $V_2 \subset \{0\} \times \Ca$.

Now it follows from Lemma~\ref{lm:totgeod} that the section is either a~$\C \HH^2$ or an~$\H \HH^2$ and by Lemma~\ref{lm:nochsec} we know that the action of $H$ on~$\M$ has no singular orbits. But there is an invariant subspace of dimension~$k \le 7$ and thus the cohomogeneity is at least~$9$. Thus the section of this polar action is at least of dimension~$9$, leading to a contradiction, since sections of polar actions are totally geodesic, however, by Proposition~\ref{prop:totgeod} there are no totally geodesic submanifolds of~$\M$ of dimension greater than~$8$, except $\M$ itself, and it follows that the $H$-action is trivial.

\subsection{\texorpdfstring{Actions with an invariant totally geodesic subspace~$\H\HH^2$}{Actions with an invariant totally geodesic subspace HH2}}

Let $\Sp(1,2)$ be the closed connected subgroup of~$\G$ whose Lie algebra is as described in Proposition~\ref{prop:quathyp}. This group acts with cohomogeneity one on~$\M$ and such that a totally geodesic~$\H \HH^2$ containing~$o$ is the unique singular orbit of this action~\cite{PT99,k11}.

It is obvious from (\ref{eq:sp13descr}) that there is a subalgebra~$\g{c}$ isomorphic to~$\g{sp}(1)$ which centralizes $\g{sp}(1)^3$ in~$\so(8)$ such that the sum of~$\g{c}$ and $\g{sp}(1)^3$ is given by
\begin{equation}\label{eq:so4so4def}
\g{sp}(1)^4 :=
\left\{ \left(
\begin{array}{c|c}
  A &  \\ \hline
   & B \\
\end{array} \right) \in \so(8)
\colon
A,B \in \R^{4 \times 4}
\right\}.
\end{equation}
It is not hard to see that~$\g{c}$ in fact centralizes all of~$\g{sp}(1,2)$: indeed, consider the subgroup of~$\Aut(\so(8))$ generated by~$\lambda$; it is of order three and acts by cyclic permutation on the simple ideals of~$\g{sp}(1)^3$. On the other hand, one can directly verify that $\lambda(\g{sp}(1)^4) = \g{sp}(1)^4$, hence we have $\tau(\g{c}) = \lambda(\g{c}) = \g{c}$.  It follows that~$\g{c}$ centralizes $\{0\} \times \H \times \H \times \H$.
We have shown that $\g{sp}(1)^4 \times \H \times \H \times \H$ is a subalgebra of~$\g{f}_4$ isomorphic to~$\g{sp}(1)\oplus \g{sp}(1,2)$. The action of the corresponding connected closed subgroup~$L := \Sp(1) \cdot \Sp(1,2)$ of~$\G$ on~$\M$ is orbit equivalent to the~$\Sp(1,2)$-action~\cite{PT99,k11}.

Let $H \subseteq L$ be a closed connected subgroup which acts polarly on~$\M$. We may assume the $H$-action is not orbit equivalent to the $L$-action.

Consider the slice representation of the $L$-action at~$o$, which is equivalent to the restriction of the linear action of~$\Sp(1)^2 \cdot \Sp(2)$ on~$\H^1 \otimes_{\H} \H^2 = e_4 \H \times e_4\H \subset \g{p}^*$, where one of the two $\Sp(1)$-factors acts trivially.

Assume that the restriction of this representation to the isotropy group~$H_o$ of the $H$-action is of cohomogeneity one. By the classification of Lie groups acting transitively on spheres, it then follows that $H_o$ contains $\Sp(2)$ and it follows that $H_o$ also acts irreducibly on the tangent space $T_o(L \cdot o) = \H \times \H \subset \g{p}^*$ of the $L$-action. Thus the $H$-orbit through~$o$ is either equal to the $L$-orbit through~$o$ or a point. In the former case it follows that the $H$-action and the $L$-action are orbit equivalent; in the latter case, the action has a fixed point and will be treated below.

We may now assume that the slice representation of the $H$-action at~$o$ is not of cohomogeneity one. It follows that the connected component~$(H_o)_0$ of the isotropy subgroup
~$H_o$ is contained in a subgroup of~$\Sp(1)^2 \cdot \Sp(2)$ of the form $\Sp(1)^2 \cdot Q$, where~$Q$ is a maximal connected subgroup of~$\Sp(2)$. According to~\cite{dynkin1,dynkin2}, the maximal connected subgroups of~$\Sp(2)$ are, up to conjugation,
\[
\Sp(1) \times \Sp(1), \quad \U(2), \quad S,
\]
where the first group acts reducibly on~$\H^2 = \H^1 \oplus \H^1$, the second group acts on~$\H^2 = \C^2 \oplus \C^2$ by two copies of the standard $\U(2)$-representation and $S$~is a three-dimensional principal subgroup of~$\Sp(2)$ given by the $4$-dimensional irreducible representation of~$\SU(2)$, cf.~\cite{dynkin1}. We will now examine these three possibilities case by case:

\begin{enumerate}

\item Assume $(H_o)_0$ is contained in $\Sp(1)^2 \cdot (\Sp(1) \times \Sp(1))$.  Then the slice representation is polar and reducible.  By conjugating~$H$ with a suitable isometry of~$\M$ we may then furthermore assume that $\H e_4 \times \{0\}$ and $\{0\} \times \H e_4$ are invariant subspaces of the slice representation. It now follows from~\cite[Theorem~4]{Da85} that the section of the polar $H$-action is as described in Lemma~\ref{lm:totgeod} and hence congruent to $\R \HH^2$, $\C \HH^2$, or~$\H \HH^2$. In all three cases, it follows that all vectors of the form $(u e_4, v e_4)$, $u,v \in \H$, are contained in $T_o\Sigma \subset \g{p}^*$ for some section~$\Sigma$ of the polar $H$-action on~$\M$. By the criterion in Proposition~\ref{prop:criterion} it now follows that $\{0\} \times \H \times \{0\} \times \{0\} \perp \g{h}$ and hence that the Lie algebra~$\g{h}$ is contained in~$\g{sp}(1)^4 \times \{0\} \times \H \times \H$.

    Now assume there are two elements $X=(A,0,x,0)$, $Y=(B,0,0,y) \in \g{h}$, where $x,y \in \H \setminus \{0\}$. Their bracket is $[X,Y] = (AB-BA,-\overline{xy},-\lambda(B)x,\lambda^2(A)y)$, which is not contained in~$\g{h}$, a contradiction. Thus it follows that $\g{h}$ is actually a subalgebra of~$\g{sp}(1)^4 \times \{0\} \times \H \times \{0\}$ or~$\g{sp}(1)^4 \times \{0\} \times \{0\} \times \H$. Hence $H$ is conjugate to a subgroup of~$\Spin(4) \cdot \Spin(1,4)$ and the $H$-action was already treated in~Subsection~\ref{subsec:hinv}.

\item In the case where $(H_o)_0$ is contained in $\Sp(1)^2 \cdot \U(2)$, the slice representation is the direct sum of two equivalent modules. Since we have treated the case of a slice representation with zero-dimensional orbits already implicitly in part~(i), we may assume that the orbits of the slice representation have positive dimension and thus it follows from~\cite[Lemma~2.9]{k02} that the slice representation is non-polar.

\item Now assume $(H_o)_0 \subseteq \Sp(1)^2 \cdot S$. Consider the slice representation of the $L$-action on~$\M$, restricted to the subgroup~$\Sp(1)^2 \cdot S$ of~$L_o$. After dividing out the effectivity kernel, it is equivalent to the isotropy representation of the Riemannian symmetric space~$\mathrm{G}_2/\SO(4)$, see~\cite[Appendix]{k02}, in particular, it is irreducible and of cohomogeneity two. It follows from~\cite[Thm.~6]{KP} that this representation restricted to~$(H_o)_0$ is either trivial or orbit equivalent to the action of~$\Sp(1)^2 \cdot S$. We may rule out the former case as above. In the latter case it follows that $H$~contains~$S$. Now it follows from the fact that $\Sp(2) / S$ is a strongly isotropy irreducible homogeneous space that
    \[
    \left. \Ad_{\Sp(1,2)} \right|_S = \Ad_S \oplus U \oplus V \oplus \g{sp}(1)
    \]
    where $U$ is a $7$-dimensional irreducible module and $V = \H^2 = \H \times \H \subset \g{p}^*$ is also irreducible. Since $U$ and $V$ are irreducible $S$-modules, we have that the projection of~$\g{h}$ on~$\g{sp}(1,2)$ either contains~$V$ or is contained in $\g{s}+U+\g{sp}(1)$, where we denote the Lie algebra of~$S$ by~$\g{s}$. In the former case, since the Lie triple system~$V$ generates~$\g{sp}(1,2)$, the $H$-action is orbit equivalent to the~$L$-action; in the latter case it has a fixed point and will be treated below.

\end{enumerate}

\subsection{\texorpdfstring{Actions with an invariant totally geodesic subspace~$\C\HH^2$ or~$\R\HH^2$}{Actions with an invariant totally geodesic subspace CH2 or RH2}}

\subsubsection{\texorpdfstring{Actions with a totally geodesic orbit~$\C\HH^2$ or~$\R\HH^2$}{Actions with a totally geodesic orbit CH2 or RH2}}
The action of~$\SU(3)\cdot\SU(1,2)$ on~$\M$ is polar and of cohomogeneity two with an irreducible slice representation, its orbit through~$o$ is a totally geodesic~$\C\HH^2$, cf.~Table~\ref{tbl:actions}. Assume there is a closed subgroup~$H \subseteq \SU(3)\cdot\SU(1,2)$ whose action on~$\M$ is polar, has the totally geodesic orbit~$H \cdot o = \C\HH^2$ and is not orbit equivalent to the action of~$\SU(3)\cdot\SU(1,2)$. Then the slice representation of~$H_o$ on~$N_o\C\HH^2$, given by the restriction of the slice representation of the $\SU(3)\cdot\SU(1,2)$-action, is of cohomogeneity greater than two and it follows from~\cite[Theorem~6]{KP} that the slice representation of the $H$-action at~$o$ is trivial. This leads to a contradiction, since a section of this action would be a $12$-dimensional totally geodesic submanifold of~$\M$.

The case of the action of $\LG_2 \cdot \SO_0(1,2)$ can be treated in an analogous way, since it is also of cohomogeneity two and has an irreducible slice representation at a point in a totally geodesic orbit congruent  to~$\R\HH^2$.

\subsubsection{\texorpdfstring{Actions with an invariant totally geodesic subspace~$\C\HH^2$ or~$\R\HH^2$ which is not an orbit}{Actions with an invariant totally geodesic subspace CH2 or RH2 which is not an orbit}}
Now let $L := \SU(3)\cdot\SU(1,2)$ and consider a polar action of a closed connected subgroup~$H$ of~$L$.

We may assume that $T_o(L \cdot o) = \spann\{(e,0),(e_1,0),(0,e),(0,e_1)\}$. Since $T_o(H \cdot o)$ is a proper subspace of~$T_o(L \cdot o)$, there is a unit vector $v \in T_o(L \cdot o)$ in the normal space $N_o(H \cdot o)$.
Using the fact that $\C\HH^2$~is a totally geodesic subspace in~$\M$ and a symmetric space of rank one, we may assume that $v = (e,0)$.

The slice representation of the $H$-action at~$o$ has the two invariant subspaces $N_o(H \cdot o) \cap T_o(L \cdot o)$ and $N_o(H \cdot o) \cap N_o(L \cdot o)$. It follows from~\cite[Thm.~4]{Da85} that the tangent space~$T_o\Sigma$ of a section through~$o$ contains~$v$ and also a vector from~$\{0\} \times \Ca$.
Since the cohomogeneity of the $H$-action is at least three, it now follows from Lemma~\ref{lm:totgeod} that a section for the $H$-action is a $\C\HH^2$ or~$\H\HH^2$. Thus Lemma~\ref{lm:nochsec} implies there are no singular orbits. However, since one of the orbits is contained in a $\C\HH^2$, the cohomogeneity is~$12$ and it follows that $H = \{e\}$.

An analogous argument works for subgroups of~$\LG_2 \cdot \SO_0(1,2)$ on~$\M$.

\subsection{Actions with a fixed point}
Actions with a fixed point on~$\M$ have been classified in~\cite{DK10}. If a connected closed subgroup~$H$ of~$\G$ acts polarly and with a fixed point on~$\M$, then $H$ is conjugate to $\Spin(9)$ or one of its subgroups~$\Spin(8)$,
$\Spin(7) \cdot \SO(2)$, $\Spin(6) \cdot \Spin(3)$.


\section{Some concluding remarks}\label{sec:concl}


Let us mention some things that are left to explore in connection with this paper. Can one find similar octonionic formulae for the Lie algebras of types~$\mathrm{E}_6$, $\mathrm{E}_7$, $\mathrm{E}_8$? It also remains to classify those polar actions on~$\M$ which do not leave any totally geodesic subspace invariant. By Theorem~\ref{th:subgroups}, it suffices to study subgroups whose Lie algebras are contained in the maximal parabolic subalgebra~(\ref{eq:anbracket}).


\end{document}